\documentclass[12pt]{article}


\usepackage[utf8]{inputenc}

\usepackage[english]{babel}
\usepackage[top=2cm,bottom=2cm,left=2cm,right=2cm,marginparwidth=1.75cm]{geometry}
\usepackage{amsmath}
\usepackage{amsthm}
\usepackage{amsfonts}
\usepackage{float}
\usepackage{amssymb}
\usepackage{mathtools}
\usepackage{upgreek}
\usepackage{mathtools}
\usepackage{bmpsize}
\usepackage{bbm}
\usepackage{csquotes}
\usepackage{indentfirst}
\usepackage{scalerel}
\usepackage[toc,title,page]{appendix}
\usepackage{lipsum}
\usepackage[colorlinks=true, allcolors=blue]{hyperref}
\usepackage{caption}

\newtheorem{theorem}{Theorem}[section]
\newtheorem{proposition}[theorem]{Proposition}

\newtheorem{corollary}[theorem]{Corollary}
\newtheorem{lemma}[theorem]{Lemma}
\theoremstyle{definition}
\newtheorem{definition}[theorem]{Definition}

\newenvironment{italicsremark}{\begin{remark}\itshape}{\end{remark}}

\newenvironment{assumptionp}[1]{
  
  \assumptionalt
}{\endassumptionalt}

\theoremstyle{remark}
\newtheorem*{remark}{Remark}

\theoremstyle{definition}

\captionsetup[figure]{font=small}

\newcommand*\samethanks[1][\value{footnote}]{\footnotemark[#1]}

\date{}

\DeclareMathOperator*{\esssup}{ess\,sup}
\newcommand{\defeq}{\mathrel{\mathop:}=}

\newcommand{\mapto}{ \xrightarrow[]{}}
\renewcommand{\div}{\text{div}}


\newcommand{\dR}{\mathbb{R}}

\begin{document}

\title{Curvature in chemotaxis:\\ A model for ant trail pattern formation}
\author{Charles Bertucci\thanks{CMAP, CNRS, École polytechnique, Institut Polytechnique de Paris, 91120 Palaiseau, France}, Matthias Rakotomalala\samethanks, Milica Toma\v{s}evi\'{c}\samethanks}

\maketitle

\begin{abstract}
    In this paper, we propose a new model of chemotaxis motivated by ant trail pattern formation, formulated as a coupled parabolic-parabolic local PDE system, for the population density and the chemical field. The main novelty lies in the transport term of the population density, which depends on the second-order derivatives of the chemical field. This term is derived as an anticipation-reaction steering mechanism of an infinitesimally small ant as its size approaches zero. We establish global-in-time existence and uniqueness for the model, and the propagation of regularity from the initial data. Then, we build a numerical scheme and present various examples that provide hints of trail formation.
\end{abstract}

\tableofcontents

\section{Introduction}
\textit{Chemotaxis} is the process of movement in response to chemical stimuli. This phenomenon can be observed at the scale of bacteria, cells, or insects. When the chemical is produced by the population sensing it, collective behavior emerges. In the celebrated paper of Keller and Segel \cite{keller1971model}, the authors proposed a PDE system modeling both the evolution of a chemical and the density at the macroscopic scale of cells, that are attracted by the concentration of the chemical they produce. The minimal version of the proposed model has the following form:
\begin{align*}
        \partial_t \rho &= \Delta_x \rho - \chi \nabla_x \cdot (\nabla_x c \rho), \text{ in }(0,T)\times \dR^2\\
        \partial_t c &= \Delta_x c - \gamma c + \rho,\text{ in } (0,T)\times \dR^2
\end{align*}
where $c$ is the concentration of the chemo attractant, $\rho$ the density of cells and $\chi$ the force of interaction, with initial data $\rho_0,c_0$. The chemoattractant diffuses and evaporates, and its production is proportional to the density of cells. The cells are diffusing in space and the transport term $\nabla_x c$ models the attraction of the cells towards the points of high concentrations. 
This parabolic-parabolic version of the Keller-Segel model has now been extensively studied in the literature, see \textit{e.g} \cite{calvez2008parabolic,biler20068pi, CORRIAS2014, Mizo20}. Contrary to the elliptic version of the model, in this case, there is no sharp threshold for global existence \textit{vs} finite time blow-up for a reasonable class of initial data. However, we do expect, when $\chi$ is large and the support of $\rho_0$ small, a finite-time blowup. Recently it has been shown that this model can be derived as the mean-field limit of interacting particles under a smallness condition on $\chi$ in $\dR^2$ and it has now been rigorously proven that the particle system converges towards a Mckean-Vlasov process \cite{ fournier2023particle}.
For a review of chemotaxis models, we refer to \cite{perthame2004pde}.

The \textit{Formicidae} family, which includes all ant species, is known for its complex collective behaviors. They can be observed forming trails connecting food sources to the nest. These trails are formed using chemical markers (\textit{pheromones}), emitted by individual ants and detected through their antennae. This chemotaxis phenomenon has peculiar qualitative properties, differing from the Keller-Segel framework, allowing the colony to coordinate and form complex trail structures. This process is known as \textit{stigmergy} \cite{grasse1982termitologia}, a concept that has been extensively studied in biological literature, see \textit{e.g.,} \cite{ATTYGALLE19851, holldobler1995chemistry, poissonnier2019experimental,beckers1990collective, detrain2001influence, edelstein1994simple}. The complexity of ant stigmergy raises new modeling challenges, and there is ongoing literature on the subject. 

A discrete approach consists of modeling the movement of particles as a biased random walk on a discrete lattice
\cite{deneubourg1990self,
vincent2004effect}. In \cite{boissard2013trail}, the authors study a model where both the ants and the pheromones they deposit are treated as discrete particles, with the ants adjusting their movement orientation based on the surrounding pheromone particles.

For continuous models, Amorim \cite{amorim2015modeling} proposed a PDE system that shows hints of trail formation in the presence of attractive food sources. Other studies, such as \cite{calenbuhr1992modelI, calenbuhr1992modelII, couzin2003self}, focus on the role of antennae in the sensing-reaction mechanism of ants to chemical stimuli. These studies emphasize the types of scalar fields, derived from the chemical concentration, that are necessary for the steering mechanism of a particle to follow a trail of chemoattractant. Fontelos and Friedman \cite{fontelos2015pde} proposed a PDE model and proved the existence of trails if the interaction intensity with the field is sufficiently large. In \cite{bruna2024lane}, the authors derived a model where each particle senses the gradient of the concentration field ahead of its position. This anticipation term, which could be understood as the length of an antenna, is essential for lane formation.


In this paper, we propose a new PDE model for ant trail pattern formation. The ants are seen as particles that are modeled through a position $x$ in space and an angle $\theta$ of orientation. They move at a constant speed in their direction $\theta$ and diffuse in space. The main novelty resides in the dependence of the drift of the angle in the second-order derivative (\textit{curvature}) of the chemotactic field. 

To illustrate the necessity of this term, imagine the chemo-attractant field $c$ as a mountain range, where the altitude represents the concentration of the chemo-attractant. A trail is analogous to a mountain ridge, a curve along which there is a high concentration—a local maximum. At such a crest, the gradient is zero. To distinguish between a crest, a summit, and a valley, we need second-order information, specifically the bi-dimensional curvature of the terrain at that point, which the hessian of the chemo-attractant provides: $\nabla^2_x c$. The Hessian allows us to understand the local geometry of the field, enabling the ant to follow trails accurately.

\begin{figure}
    \centering
    \includegraphics[width=.9\linewidth]{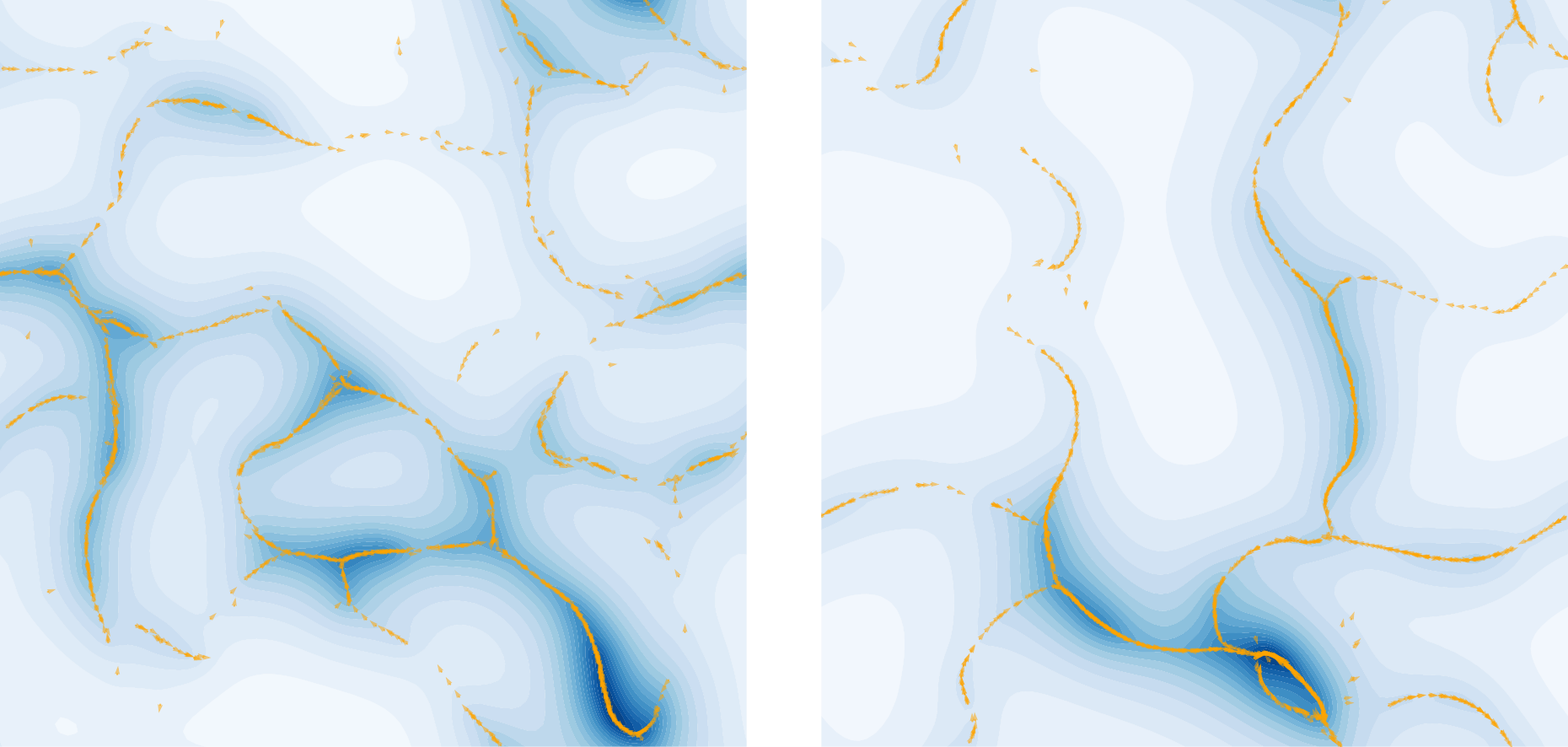}
    \caption{Typical Monte-Carlo particle simulations of our McKean-Vlasov model. The \textit{orange} dots represent the particles and in \textit{blue} the concentration field of pheromones.}
    \label{fig:IntroductionSimulationResult}
\end{figure}

To incorporate the Hessian, we suppose that the density of particles diffuses in $\theta$ and that the following scalar field transports the orientation:
\begin{equation*}
    B(\theta,\nabla_x c(x), \nabla^2_x c(x)) = v^\perp(\theta)\nabla_x c(x) + \tau v^\perp(\theta)\cdot \nabla^2_x c(x) v(\theta),
\end{equation*}
where $\tau \geq 0$ represents the level of anticipation of the ant, and where $v(\theta)$(resp. $v^\perp(\theta)$) are defined as $(\cos(\theta), \sin(\theta)) \in \dR^2$ (resp. $(-\sin(\theta),\cos(\theta))$). Given this scalar field, we couple the system in a manner similar to the Keller–Segel model, incorporating a chemical field $c$, and obtain:
\begin{align*}
    \partial_t \rho &= - \lambda v\cdot \nabla_x\rho - \chi \partial_\theta(B(\nabla_x c, \nabla^2_xc)\rho) + \sigma_\theta \Delta_\theta \rho + \sigma_x \Delta_x \rho \text{ on } (0,T)\times \mathbb{\dR}^2\times \mathbb{T}_{2\pi},\\
    \partial_t c &= -\gamma c + \sigma_c \Delta_x c +\mu \int \rho d\theta \text{ on }(0,T)\times \mathbb{\dR}^2,\\
    \rho_{t=0} &= \rho_0, c_{t=0} = c_0,
\end{align*}

where $\mathbb{T}_{2\pi}$ is the $2\pi$-periodic Torus. This system is the main focus of this paper. In Section~\ref{sec:DerMod}, it is derived as the limit of an anticipation-reaction mechanism performed by an ant using its two sensory antennae. The differential quantity is obtained by taking the limit as the size of the antennae approaches zero. The dependence on the second-order derivatives of $c$ allows us to derive a \textit{local} PDE model while still achieving trail formation (see Figure~\ref{fig:IntroductionSimulationResult} for a typical numerical simulation of our model). Setting $\tau =0$ would result in aggregation towards a center of mass, similarly as in the Keller–Segel model (\textit{cf.} \cite{bruna2024lane}), rather than trail formation. 

Our model is a singular, attractive coupled PDE system. However, under a uniform ellipticity condition, we can obtain uniform bounds on the average in $\theta$ of the density due to the boundedness of the spatial speed (see Lemma~\ref{lem:average}). This estimate prevents finite-time blow-up, allowing for a global-in-time existence and uniqueness result (see Theorem~\ref{thm:existuniqPara}). One could wonder about the behavior of the model for $\sigma_x = 0$, since the operator $-v \cdot \nabla_x + \Delta_\theta$ satisfies H\"ormander's condition, implying hypoelliptic regularity. Nevertheless, deriving estimates on the singularly coupled system remains a nontrivial challenge, and we leave it for future works.

The PDE model is naturally associated with a system of interacting particles representing a colony of $N$-particles (see Section~\ref{sec:DerMod} and Section~\ref{sec:NumSim} for a detailed presentation). The particle model is written as the following stochastic differential equation system:
\begin{align*}
        dX^{i}_t &= \lambda v(\Theta^i_t)dt + \sqrt{2\sigma_x}dW^{1,i}_t,\\
        d\Theta^i_t &= \chi B(\Theta^i_t, \nabla c^N(t, X^i_t), \nabla^2 c^N(t, X^i_t))dt + \sqrt{2\sigma_\theta}dW^{2,i}_t,\\
        \partial_t c^N &= -\gamma c^N + \sigma_c \Delta c^N + \mu m^N,\\
        c_{t=0} & = c_0,(X^i_0,\Theta^i_0) \ i.i.d. \sim \rho_0,
\end{align*}

for $1\leq i\leq N$, where $(X_t^i,\Theta^i_t)$ represents the state of the particle $i$ at time $t$, $(W^{1,1,i}, W^{1,2,i}, W^{2,i})$ is a $3$-dimensional Brownian motion, and $m^N$ is the empirical measure of the spatial position,

\begin{equation*}
    m^N \defeq \frac{1}{N}\sum^N_{i=1} \delta_{X^i_t}.
\end{equation*}

This raises the problem of convergence of the particle system (see Section~\ref{sec:DerMod}) towards the well-posed McKean-Vlasov equation (Theorem~\ref{thm:MckVExistuniq}) and the propagation of chaos, that we plan on addressing in future research.

\paragraph{Plan of the paper} In Section~\ref{sec:DerMod}, we derive our model, the McKean-Vlasov equation and the particle system. In Section~\ref{sec:MainRes}, we announce the results proved in this paper, namely, global in-time existence and uniqueness result together with the propagation of regularity of the initial data. In Section~\ref{sec:LinFP}, we show some preliminary results on the linear Fokker-Planck equation. And in Section~\ref{sec:ProofMR}, we prove the theorems stated in Section~\ref{sec:MainRes}. Finally, in Section~\ref{sec:NumSim}, we provide numerical results for the model at both the particle and macroscopic scales, which show hints of trail formation—where particles agglomerate along curves of high concentration of the chemical field and move tangentially along these paths.

\section{Derivation of the model}
\label{sec:DerMod}
The derivation of the model is split into two parts. First, we focus on the microscopic derivation observing the natural emergence of second derivatives of the pheromone concentration field. Then, we formally take the mean-field limit of the microscopic model and obtain the PDE system, the main focus of this work. In addition, we present an extension of the model with a two-state population. 

\subsection{Our microscopic model} 
We first derive the dynamic of an infinitesimal small ant at the micro-scale given a concentration $c : \dR^2 \mapto \dR_+ $ of chemo-attractant. Secondly, having a finite number of ants, we model the dynamic of the chemical field they produce which introduces interactions between ants. This leads us to a microscopic system of interacting particles.

\paragraph{Movement of one ant given a pheromone field}
We model an ant through a position $X\in\dR^2$ in the plane and an orientation $\Theta \in [0,2\pi)$. Let $((\Omega,(\mathcal{F}_s)_s,\mathbb{P}),(W^{1,1},W^{1,2},W^2))$ be a filtered probability space with the usual hypothesis, equipped with a 3-dimensional Brownian motion $(W^{1,1},W^{1,2},W^2)$. We denote by $W^1 = (W^{1,1},W^{1,2})$, the first two components, forming a 2-dimensional Brownian motion. In the whole paper, we will use the following notation: for $\theta \in [0,2\pi)$, 

\begin{equation}
        \label{def:vspeed}
        v(\theta) \defeq \begin{pmatrix}
                        \cos(\theta)\\
                        \sin(\theta)\\
                    \end{pmatrix}, 
        \frac{d}{d\theta}v(\theta) = v^\perp(\theta) \defeq   \begin{pmatrix}
                        -\sin(\theta)\\
                        \cos(\theta)\\
                    \end{pmatrix} \in \dR^2.
\end{equation}

The ant moves in the plane according to the following stochastic differential equation,
\begin{equation*}
    dX_t = \lambda v(\Theta_t)dt + \sqrt{2\sigma_x}dW^1_t,
\end{equation*}

since $\Theta_t$ is the azimuthal angle, $v(\Theta_t)$ represents the unit direction in which the particle moves. The parameter $\lambda>0$ represents the constant speed of an ant, and $\sigma_x \geq 0$ is the diffusion coefficient interpreted as a perturbation of the motion by external random forces.

In order to derive the main modeling novelty of this paper, let us consider the following biological facts. The \textit{Formicidae} family possesses two antennae at the front of their heads, serving sensory purposes, including the detection of chemo-attractants left by their colony. These chemical markers indicate trails towards areas of interest such as food sources or the nest. The extremal part of each antenna consists of multiple flagellar segments, forming sensor groups along its length. Let $\varepsilon$ denote the infinitesimal small length of an antenna, and let $\pi/2>\beta>0$ represent half of the angle between the two antennae. Thus, the midpoint of the left antenna is at $X+\varepsilon v(\Theta + \beta)$, and that of the right antenna is at $X+\varepsilon v(\Theta - \beta)$. In Figure \ref{fig:AntAnticipation}, we summarise the modeling quantities in a schematic representation of an ant.

We suppose that each of the antennae is able, using its multiple flagellars, to sense the gradient of the concentration field $c$ at its respective position. We denote by $A^\varepsilon_{right}$ (resp. $A^\varepsilon_{left}$) the stimulus perceived by the $right$ (resp. $left$) antenna. The stimuli are expressed as the following Taylor expansions of the concentration field:
\begin{align*}
    A^\varepsilon_{left} &= c(X_s+\varepsilon v(\Theta_s+\beta)) + \tau v(\Theta_s) \cdot \nabla_x c(X_s + \varepsilon v(\Theta_s+\beta)),\\
    A^\varepsilon_{right} & = c(X_s+\varepsilon v(\Theta_s-\beta)) + \tau v(\Theta_s) \cdot \nabla_x c(X_s + \varepsilon v(\Theta_s-\beta)),
\end{align*}

for some $\tau > 0$. These signals are interpreted as anticipations of the chemical field. They approximate the concentration of chemo-attractant that the ants would encounter after a short period $\tau/\lambda$ if they continue moving in their current direction, starting from the corresponding sensor. We call $\tau$ the \textit{anticipation rate}.

The ants try to stay along curves of high concentration, they react to the sensory inputs by correcting their orientation $\Theta$. If the \textit{left}-stimulus is stronger than the right one: $A^\varepsilon_{left} > A^\varepsilon_{right}$, then the variation of $\Theta$ should be positive to move in the trigonometric direction.
Similarly, if $A^\varepsilon_{left} < A^\varepsilon_{right}$, then the variation of the angle should be negative. Rescaling by $\varepsilon$, we obtain after a small time $\delta t$ the following variation $\delta \Theta$ of angle:

\begin{equation*}
     \delta \Theta = \chi \frac{A^\varepsilon_{left} -A^\varepsilon_{right} }{\varepsilon}\delta t + \sqrt{\delta t} \eta,
\end{equation*}

where $\chi>0$ is the force of reaction, and $\eta$ is a Gaussian noise. Performing a Taylor expansions w.r.t $\varepsilon$ of the terms $A^\varepsilon_{left}, A^\varepsilon_{right}$ and taking the limit as $\varepsilon$  goes to zero, we obtain the following limiting anticipation-reaction drift:

\begin{equation*}
    \label{eq:DerivationDrift}
    \lim_{\varepsilon \mapto 0} \frac{A^\varepsilon_{left} -A^\varepsilon_{right} }{\varepsilon} = (v(\Theta + \beta)+v(\Theta - \beta))\cdot \big[ \nabla_x c(X_s) + \tau \nabla^2_x c(X_s) v(\Theta_s)\big].
\end{equation*}

\begin{figure}
    \centering
    \includegraphics[width=.6\linewidth]{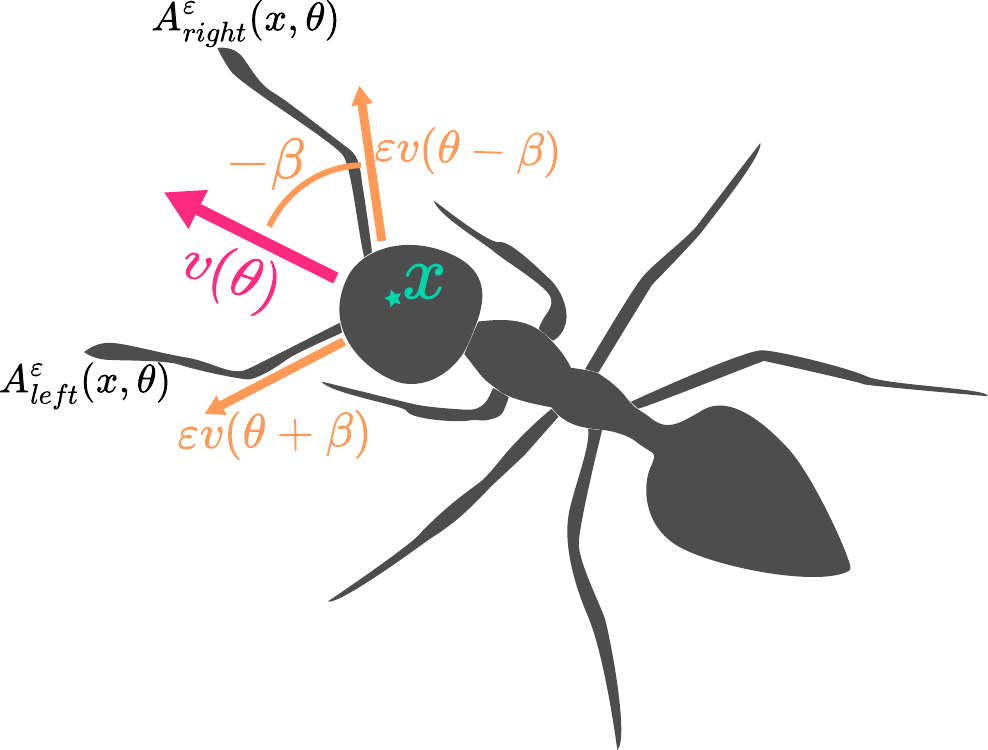}
    \caption{Diagram representing a schematic ant with the different quantities involved in the process of anticipation-reaction at scale $\varepsilon$.}
    \label{fig:AntAnticipation}
\end{figure}

From here, using trigonometric identities, one can check that:

\begin{equation*}
    v(\Theta+\beta) + v(\Theta-\beta) = \sin(\beta) v^\perp(\Theta).
\end{equation*}

So that we then obtain the following drift:
\begin{equation*}
    \sin(\beta)(v^\perp(\Theta)\nabla_x c(X_s) + \tau v^\perp(\Theta)\cdot \nabla^2_x c(X_s) v(\Theta_s).
\end{equation*}

We thus define the drift function $B:(0,2\pi]\times \dR^2\times \mathcal{M}_{2,2}(\dR) ; \mapto \dR$ as:
\begin{equation}
    \label{eq:ThetaDrift}
    \tag{$B_\theta$}
    B(\theta,p, A) = v^\perp(\theta) \cdot p + \tau v^\perp(\theta) \cdot A v(\theta),
\end{equation}

for $\theta\in (0,2\pi]$, $p\in \dR^2$ and $A \in M_{2,2}(\dR)$. Absorbing $\sin(\beta)$ in $\chi$, the dynamic for $\Theta$ then writes as:
\begin{equation*}
    d\Theta_t = \chi B(\Theta_t, \nabla_x c(X_t), \nabla^2_x c(X_t)) dt + \sqrt{2\sigma_\theta}dW^2_t,
\end{equation*}
where $\sigma_\theta > 0$ stands for the azimuthal diffusion coefficient. And the equation lies in the tangent space of the $2\pi$-torus where $\Theta$ takes its values. The parameter $\sigma_\theta$ models the exploration behavior. In the absence of chemical stimuli, an ant will engage in random exploration of the domain.

Summing up, given a concentration field ${c:[0,T]\times \dR^2\mapto \dR}$, that we now allow to depend on time, the dynamic of an ant is given by
\begin{equation}
    \label{sde:Antmicro}
    \begin{cases}
        dX_t = \lambda v(\Theta_t) dt + \sqrt{2\sigma_x}dW^1_t,\\
        d\Theta_t = \chi B(\Theta_t, \nabla c(t, X_t), \nabla^2c(t, X_t)) dt + \sqrt{2\sigma_\theta}dW^2_t,
    \end{cases}
\end{equation}

with $B$ defined as in \eqref{eq:ThetaDrift}. This stochastic differential equation(SDE) is non-potential, which challenges the analysis of its asymptotic behavior. Nevertheless, we can interpret it as the drift associated with maximizing locally the second-order Taylor approximation of the chemotactic field. Indeed, let us make the following remarks on the dynamics of the angle.

\paragraph{Autonomous azimuthal equation} For fixed $p\in \dR^2$ and $A \in M_{2,2}(\dR)$, the law associated with the autonomous dynamic:
\begin{equation}
    \label{sde:PhiAutomomous}
    d\Phi_t = \chi B(\Phi_t, p, A)dt + \sqrt{2}dW^2_t,
\end{equation}
is potential, and attracted by the following explicit stationary distribution:
\begin{align}
    \label{eq:stationnaryTheta}
    \mu_{p,A}(d\theta) = C_{p,A}\exp\Big(\chi H(\theta, p, A)\Big)d\theta,\\
    H(\theta, p, A) = v(\theta) \cdot p + \frac{\tau}{2} v(\theta) \cdot A v(\theta),
\end{align}
where $C_{p,A} >0 $ is the normalization constant, and $H(\cdot,p,A):(0,2\pi] \mapsto \dR$ is the potential associated with the drift $B(\cdot, p, A)$, since
\begin{equation*}
    \partial_\theta H(\theta,p,A) = B(\theta,p,A).
\end{equation*}

The quadratic form $ c(x) + \tau H(\theta, \nabla c(x), \nabla^2c(x))$ is the Taylor expansion of the field around the point $x$, 
\begin{equation*}
    H(\theta, \nabla c(x), \nabla^2c(x)) = \frac{1}{\tau}(c(x + \tau v(\theta)) - c(x)) + o(\tau^2).
\end{equation*}
 Depending on the terrain profile $(p,A) \in \dR^2\times M_{2,2}(\dR)$, the stationary distribution \eqref{eq:stationnaryTheta} is either uniform, unimodal, or bimodal. Hence it exhibits the desired asymptotic behavior: the uniform case corresponds to the exploration behavior, where no direction is preferred. When the distribution has a unique maximum, the unimodal case corresponds to going uphill and the bi-modal case attains its two maxima at antipodal points, corresponding to the two possible directions to walk along a crest or leave a saddle point.
Since the parameter $\tau$ represents the anticipation rate in the steering mechanism, setting $\tau = 0$ would prevent the particles from following trails, causing them to oscillate around instead (\textit{cf.} \cite{bruna2024lane}). This parameter should be taken depending on the topology of the trails in the chemotactic field.

\paragraph{Interacting particles system}
Suppose that we have a system modeling a population of $N$-ants. Let $((W^{1,i},W^{2,i}))^{1\leq i\leq N}$ a $3N$-dimensional Brownian motion, we denote by $(X^i_t,\Theta^i_t)$ the state of particle $i$ at time $t$, for $i= 1,\cdots,N$. Each is a solution to an SDE of type \eqref{sde:Antmicro} for the same pheromone field. We now introduce interaction between the particles, by coupling the chemotactic movement with the evolution of the chemical field $c^N$. We suppose that it is produced by the ants at a constant rate $\mu$. Additionally, we assume that it diffuses uniformly in the domain with diffusion coefficient $\sigma_c > 0$, and evaporates at a constant rate $\gamma > 0$. If we denote by $m^N_t$ the empirical measure of the spatial position of the ants at time $t$:
\begin{equation*}
    m^N_t = \frac{1}{N} \sum_{i=1}^N \delta_{X^i_t}.
\end{equation*}
The evolution equation for $c^N$, similarly as in the Keller-Segel model, writes as:
\begin{equation*}
    \partial_t c^N = \sigma_c \Delta c^N - \gamma c^N + \mu m^N_t.
\end{equation*}

Combining this together with the evolution equation \eqref{sde:Antmicro}, we obtain the following system of interacting particles:
\begin{equation}
    \label{sys:ParticuleSystem}
    \tag{$F^{N}_\chi$}
    \begin{cases}
        dX^{i}_t = \lambda v(\Theta^i_t)dt + \sqrt{2\sigma_x}dW^{1,i}_t,\\
        d\Theta^i_t = \chi B(\Theta^i_t, \nabla c^N(t, X^i_t), \nabla^2 c^N(t, X^i_t))dt + \sqrt{2\sigma_\theta}dW^{2,i}_t,\\
        \partial_t c^N = -\gamma c^N + \sigma_c \Delta c^N + \mu m^N,\\
        (X^i_0,\Theta^i_0) \ i.i.d. \sim \rho_0, c_{t=0} = c_0.
    \end{cases}
\end{equation}

\begin{italicsremark}
We can give sense to the stochastic parabolic equation satisfied by $c^N$ using the Duhamel formula. This notion still results in singularities at the positions of the particles. However, in this paper, we focus more on the analysis of the macroscopic model rather than the well-posedness of the particle system. Nevertheless, we will explain in Section \ref{sec:NumSim} how to avoid self-interaction singularities when simulating the particle system.

We could also assume that in the finite population system, the ants have a size $\varepsilon_N$, and that we rescale this size to zero as $N$ grows to infinity. This would lead us to consider an area around each particle where the chemo-attractant is released. Let $\varphi^N$ be a function of compact support representing the spray area depending on the scaling $\varepsilon_N$. Therefore, $\varphi^N * m^N_t$ is the production of substance $c^N$ at time $t$, where we use $*$ to denote the convolution:
\begin{equation*}
    \varphi^N * m^N_t(x) = \frac{1}{N}\sum^N_{i=1} \varphi^N(x-X^i_t).
\end{equation*}
Plugging this into the equation for $c^N$ would yield a well-possed system of moderately interacting particles. And assuming the convergence $\varphi^N \rightharpoonup \delta_0$ would yield in the limit in $N$ the same macroscopic model as the singular interacting particle system \eqref{sys:ParticuleSystem}.
\end{italicsremark}

\subsection{Our macroscopic model and a two-state extension}
It is rather natural to consider, the  Mckean-Vlasov stochastic differential equation associated to~\eqref{sys:ParticuleSystem}, representing the non-linear mean-field limit dynamic of a typical ant in a population of infinitely many ants. The system writes as follows:
\begin{equation}
    \label{sys:mckeanvlasov}
    \tag{$F^{MV}_\chi$}
    \begin{cases}
        dX_t = \lambda v(\Theta_t)dt + \sqrt{2\sigma_x}dW^1_t,\\
        d\Theta_t = \chi B(\Theta_t, \nabla c^{m}(t, X_t), \nabla^2 c^{m}(t, X_t))dt + \sqrt{2\sigma_\theta}dW^2_t,\\
        m_t \defeq \mathcal{L}(X_t),\\
        \partial_t c^{m} = -\gamma c^{m} + \sigma_c \Delta c^{m} + \mu m,\\
        (X_0, \Theta_0) \sim \rho_0, c_{t=0} = c_0.
    \end{cases}
\end{equation}

where ${m}_t$ is the marginal law of the position of an ant, and $c^{m}$ is the chemotactic field associated with $m$. We will prove an existence and uniqueness result for the previous McKean-Vlasov equation under an assumption on the regularity of the initial data (Theorem~\ref{thm:MckVExistuniq}) thanks to our fine analysis of the associated Fokker-Planck PDE.

The problem of convergence of the particle system toward the Mckean-Vlasov equation is a subject of an ongoing work. Let us now focus on the associated macroscopic PDE system.

\subsubsection*{The partial differential equation system}
From the stochastic differential equation \eqref{sys:mckeanvlasov}, we can derive the associated Fokker-Planck equation describing at the macro-scale the evolution of the joint distribution $(X,\Theta)$, denoted by $\rho : [0,T]\times \dR^2 \times [0,2\pi) \mapto \dR_+$, using Ito's formula and formally integrating by parts. Since the production of pheromone $c$ only depends on the density in the spatial $x$-variables, the production term is written as: $\mu \int \rho d\theta$. And we obtain the following system:
\begin{equation}
    \tag{$F\chi$}
    \label{sys:Formidicae}
    \begin{cases}
        \partial_t \rho = - \lambda v\cdot \nabla_x\rho - \chi \partial_\theta(B(\nabla_x c, \nabla^2_xc)\rho) + \sigma_\theta \Delta_\theta \rho + \sigma_x \Delta_x \rho \text{ on } (0,T)\times \mathbb{\dR}^2\times \mathbb{T}_{2\pi}.\\
        \partial_t c = -\gamma c + \sigma_c \Delta_x c +\mu \int \rho d\theta \text{ on }(0,T)\times \mathbb{\dR}^2,\\
        \rho_{t=0} = \rho_0, c_{t=0} = c_0,
    \end{cases}
\end{equation}

where $\mathbb{T}_{2\pi}$ is the $2\pi$-periodic Torus. It could be of modeling interest, to consider the system on a general domain for the position variable $x$, choosing some domain $\Omega \subset \dR^2$ of the plane, subject to Neumann or Dirichlet boundary conditions. We can also set the spatial domain with a periodic boundary condition, changing $\dR^2$ to $\mathbb{T}^2_1$ in the previous system \eqref{sys:Formidicae}, where $\mathbb{T}_1^2$ is the two-dimensional $1$-periodic torus. In Section \ref{sec:ProofMR}, we will prove a global in time existence and uniqueness result(see Theorem~\ref{thm:existuniqPara}), both in the case of the whole space $\dR^2$ and the two-dimensional Torus $\mathbb{T}^2_1$. Our analysis relies on the Green functions of the heat equation on these domains. We leave the general domain case for future work.

Similar to the Keller-Segel model, one could consider the parabolic-elliptic case, which models an infinitely fast-diffusing chemical. This would replace the parabolic equation for $c$ with an elliptic equation at each time $t$,

\begin{equation*}
    0 = -\gamma c_t + \sigma_c \Delta_x c_t + \mu \int \rho_t d\theta \text{ on }\dR^2.
\end{equation*}

And one could adapt the existence and uniqueness proof given in Section~\ref{sec:globalExistence} to this case.

\subsubsection*{Two-state model}

We propose an extension to the model \eqref{sys:Formidicae}, by incorporating food sources in the space domain for modeling foraging ants. Let $\alpha : \dR^2 \mapto \dR_+$ represents the concentration of food in the plane, while $\beta : \dR^2 \mapsto \dR_+$ represent the nest. Essentially, $\beta(x)$ is the instantaneous probability for an ant at $x$ to deliver food to the nest. We assume that each ant can be in one of two following states: either \textit{looking for food} (state $\alpha$), or \textit{bringing food back to the nest} (state $\beta$). Let $\rho^\alpha$ and $\rho^\beta$ represent the density of ant in state $\alpha$ and $\beta$, respectively. For a drift $B$, we denote by $\mathcal{L}_B$ the divergence form operator associated with $B$, defined as:

\begin{equation}
    \label{eq:FormDiffOpB}
    \tag{$\mathcal{L}_B$}
    \mathcal{L}_B g = \sigma_x \Delta_x g + \sigma_\theta \Delta_\theta g - \partial_\theta (Bg) - \lambda \div_x (v g).
\end{equation}

Then the two-state population will evolve according to the following system,
\begin{equation}
    \label{eq:TwostateFP}
    \begin{cases}
        \partial_t \rho^\alpha = \mathcal{L}_{B^\alpha} \rho^\alpha -\alpha \rho^{\alpha} + \beta \rho^\beta,\\
        \partial_t \rho^\beta = \mathcal{L}_{B^\beta} \rho^\beta -\beta \rho^\beta + \alpha \rho^\alpha.
    \end{cases}
\end{equation}

Each state is associated with an anticipation-reaction drift $B^\alpha$ and $B^\beta$,
and $\alpha$(resp. $\beta$) represents the instantaneous probability for an ant in state $\alpha$ to switch to state $\beta$(resp. $\beta$ to switch to state $\alpha$), depending on its spatial position. We could also suppose that changing state affects the orientation. For example, the ant immediately makes a U-turn after changing state, this would change the source term $\alpha(x) \rho^\alpha(t,x,\pi)$ in the $\beta$-equation to,
\begin{equation}
    \label{eq:TransitionTheta1}
    \alpha J[\rho_t](x,\theta) = \alpha(x)\rho(t,\theta+\pi).
\end{equation}

Or for a random change of orientation, this would lead to,
\begin{equation}
    \label{eq:TransitionTheta2}
    \alpha\Tilde{J}[\rho_t](x,\theta) = \alpha(x)\int \rho(t,x,\theta-\omega)z(\omega)d\omega,
\end{equation}
where $z$ is some probability distribution in $\mathbb{T}_{2\pi}$. Similarly, we can change the term $\beta \rho^\beta$ for $\beta J[\rho^\beta]$ in the $\alpha$-equation. For a general instantenaous orientation transition operator, let $J : L^1(\mathbb{T}_{2\pi}) \mapto L^1(\mathbb{T}_{2\pi})$, and we suppose for modeling considerations that:
\begin{enumerate}
    \item $J$ is \textit{positive}:
        \begin{equation*}
            \forall f\in L^1(\mathbb{T}_{2\pi}), f \geq 0\text{, then } J[f] \geq 0,
        \end{equation*}
    \item and \textit{mass preserving}:
        \begin{equation*}
            \int f(\theta) d\theta = \int J[f](\theta) d\theta.
        \end{equation*}
\end{enumerate}

Note that the transition operators defined in \eqref{eq:TransitionTheta1}, \eqref{eq:TransitionTheta2}, and the identity, are mass preserving and positive. We also assume that, depending on their state, the ants have a preferred direction, that we model through an additive term $D^\alpha$ (resp. $D^\beta$) in the drift,
\begin{align*}
    B^\alpha(t, \theta, x) = B(\theta,\nabla_x c^\alpha(t,x), \nabla^2_x c^\alpha(t,x)) + D^\alpha(\theta, x),\\
    B^\beta(t,\theta, x) = B(\theta,\nabla_x c^\beta(t,x), \nabla^2_x c^\beta(t,x)) + D^\beta(\theta, x),
\end{align*}

where $B$ is defined as in \eqref{eq:ThetaDrift}, where $c^\alpha$(resp. $c^\beta$) is the chemofield with which the ants in state $\alpha$ follows(resp. ants in state $\beta$). This is motivated by the following, consider the ants in state $\alpha$, we suppose that the ants can smell the food source from a distance, let the field $d^\alpha:\dR^2\mapto \dR_+$ represent this chemical information. For example, one can take it as the solution of the elliptic equation:
\begin{equation*}
    -\gamma_\alpha d^\alpha + \sigma_\alpha \Delta d^\alpha + \alpha = 0,
\end{equation*}
corresponding to the food smell being at equilibrium in the domain. This would yield the following $D^\alpha$:
\begin{equation*}
    D^\alpha(\theta, x) = \chi_\alpha v^\perp(\theta) \cdot \nabla d^\alpha(x),
\end{equation*}

for some model parameters $\chi_\alpha >0$ representing the sensitivity of an ant to the substance, $\gamma_\alpha>0$ the evaporation rate of the substance smell and $\sigma_\alpha>0$ its diffusion coefficient. For simplicity, we will assume that the food concentration remains constant over time, \textit{i.e} $D^\alpha$ and $D^\beta$ are time-homogeneous. Hence, $\beta, \alpha, D^\alpha, D^\beta$ are considered as model data. The chemo-fields $c^\alpha$ and $c^\beta$ are produced by the ants. To model their production, let $G^\alpha : (L^p_x)^2 \mapto L^p_x$ and $G^\beta : (L^p_x)^2 \mapto L^p_x$ be two functional operators. We obtain the following system:

\begin{equation}
    \label{sys:TwostateFormidicae}
    \tag{$F^{\alpha, \beta}_\chi$}
    \begin{cases}
        \partial_t \rho^\alpha = \mathcal{L}_{B^\alpha} \rho^\alpha - \alpha \rho^\alpha + \beta J[\rho^\beta],\\
        \partial_t \rho^\beta = \mathcal{L}_{B^\beta} \rho^\beta - \beta \rho^\beta + \alpha J[\rho^\alpha],\\
        \partial_t c^\alpha = \sigma_c \Delta c^\alpha - \gamma_c c^\alpha + G^\alpha[\int \rho^\alpha d\theta, \int\rho^\beta d\theta],\\
        \partial_t c^\beta = \sigma_c \Delta c^\beta - \gamma_c c^\beta + G^\beta[\int \rho^\alpha d\theta, \int\rho^\beta d\theta],
    \end{cases}
\end{equation}

where $G^\alpha[\int \rho_t^\alpha d\theta,\int \rho_t^\beta d\theta]$(resp. $G^\alpha[\int \rho_t^\alpha d\theta,\int \rho_t^\beta d\theta]$) is the production of chemoattractant $\alpha$(resp. $\beta$) at time $t$. Consideration can be given to a model where the ants interact through identical chemo-fields regardless of their states: $c^\alpha = c^\beta$.

\subsubsection*{Normalized system}
Finally, as a preliminary step of the analysis, we normalize the system \eqref{sys:Formidicae}, by introducing the following transformations:
\begin{equation*}
            \Tilde{c}(t,x) = \frac{1}{\mu}c\Big(\frac{t}{\sigma_\theta},\sqrt{\frac{\sigma_x}{\sigma_\theta}}x\Big),\ \ \Tilde{\rho}(t, x, \theta) = \rho\Big(\frac{t}{\sigma_\theta},\sqrt{\frac{\sigma_x}{\sigma_\theta}}x, \theta\Big).
\end{equation*}
Then, one can check that $\Tilde{c}$ and $\Tilde{\rho}$ are solutions to the system,
\begin{equation}
    \label{sys:NormalizedIntermediate}
    \begin{cases}
        \partial_t \Tilde{c} = -\frac{\gamma}{\sigma_\theta} \Tilde{c}  +  \frac{\sigma_c}{\sigma_x}\Delta \Tilde{c}  + \int \Tilde{\rho}(\cdot,\theta)d\theta, \\
        \partial_t \Tilde{\rho}= - \frac{\lambda}{\sqrt{\sigma_x \sigma_\theta}}v \cdot \nabla_x  \Tilde{\rho} - \frac{\chi \mu}{\sqrt{\sigma_\theta\sigma_x}} \partial_\theta(B(\nabla \Tilde{c}, \sqrt{\frac{\sigma_\theta}{\sigma_x}}\nabla^2\Tilde{c}) \Tilde{\rho}) + \Delta_\theta  \Tilde{\rho} + \Delta_x  \Tilde{\rho}.
    \end{cases}
\end{equation}
We introduce $\sigma \defeq \frac{\sigma_c}{\sigma_x}$, and we make the following substitutions, abusing notation slightly: $\chi$ replaces $\frac{\chi \mu}{\sqrt{\sigma_\theta \sigma_x}}$, $\gamma$ replaces $\frac{\gamma}{\sigma_\theta}$, $\lambda$ replaces $\lambda \sqrt{\frac{\sigma_\theta}{\sigma_x}}$ in \eqref{sys:NormalizedIntermediate}, and we change $\tau$ to $\tau \sqrt{\frac{\sigma_\theta}{\sigma_x}}$ in the definition of $B$, still denoting it by $\tau$. We then obtain in broad generality the following normalized system:
\begin{equation}
    \label{sys:FormidicaeNrmd}
    \tag{$\Bar{F}\chi$}
    \begin{cases}
        \partial_t c = -\gamma c + \sigma \Delta_x c + \int \rho d\theta, \\
        \partial_t \rho = - \lambda v\cdot \nabla_x \rho - \chi \partial_\theta(B(\nabla_x c, \nabla^2_xc)\rho) + \Delta_{\theta,x} \rho,\\
        \rho_{t=0} = \rho_0, c_{t=0} = c_0,
    \end{cases}
\end{equation}
where $\Delta_{\theta,x}$ denotes the Laplacian operator $\Delta_{\theta} + \Delta_{x}$.

\section{Main results}
\label{sec:MainRes}
In this section, we present our main results. We obtain a global-in-time existence and uniqueness result to the Cauchy problems \eqref{sys:Formidicae} and \eqref{sys:TwostateFormidicae}(Theorem~\ref{thm:existuniqPara} and Theorem~\ref{thm:TwoStateFokkerPlanckMild}).
We show the propagation of the regularity of the initial data (Theorem~\ref{thm:FurtherRegularity}), and the existence and uniqueness of the Mckean-Vlasov SDE(Theorem~\ref{thm:MckVExistuniq}). After presenting some notations and preliminaries, these results are stated below.

\subsection{Notation and preliminaries}
In the following, we will study the system either on $\dR^2\times \mathbb{T}_{2\pi}$ or $\mathbb{T}^2_1\times \mathbb{T}_{2\pi}$, where $\mathbb{T}_{L} = \dR/L\mathbb{Z}$ is the $L$-periodic torus. Let $\mathcal{D}$ be the position domain, namely either $\mathcal{D} = \dR^2$ or $\mathcal{D} = \mathbb{T}^2_1$. For notation conciseness,  we will denote by $L^p_x(L^r_\theta)$  for $1\leq p,r\leq \infty$, the space $L^p(\mathcal{D}, L^r(\mathbb{T}_{2\pi}))$, equipped with its norm,
\begin{equation*}
    \|f\|_{p,r} = \left(\int_\mathcal{D} \left(\int_{\mathbb{T}_{2\pi}} |f(x,\theta)|^p d\theta\right)^\frac{p}{r}dx\right)^\frac{1}{p},
\end{equation*}
changing for $\esssup$ if $r = \infty$,
\begin{equation*}
    \|f\|_{p,\infty} = \left(\int_\mathcal{D} \left(\esssup_{\theta \in \mathbb{T}_{2\pi}} |f(x,\theta)|\right)^pdx\right)^\frac{1}{p}.
\end{equation*}

Similarly, for $T>0$, we will note $L^q_t(L^p_x(L^r_\theta))$ the Bochner spaces $L^q([0,T],L^p_x(L^r_\theta))$ and $C_t(L^p_x(L^r_\theta))$ the Banach space of continuous function $C([0,T],L^p_x(L^r_\theta))$ equipped with the $\sup$-norm. If $\mathcal{Y}$ is a Bochner space, $(\mathcal{Y})_{+}$ is the cone of non-negative functions in $\mathcal{Y}$.
$W^{1,2}_p$ will denote the anistropic Sobolev space $W^{1,2}_p([0,T], \mathcal{D})$, and $W^{2-2/p}_p$ the Besov space $W^{2-2/p}_p(\mathcal{D})$. For $k',k \in \mathbb{N}$ and $\zeta', \zeta \in (0,1)$, $C^{k' +\zeta', k+\zeta }$ denotes the H\"older space $C^{k' +\zeta', k+\zeta }([0,T],\mathcal{D}\times \mathbb{T}_{2\pi})$ or $C^{k' +\zeta', k+\zeta }([0,T],\mathcal{D})$ depending on the context. We will use $*$-symbol to represent the convolution operation, adapting to the periodic convolution in the case of the torus.

We will use multiple times the following Young convolution inequality.
\begin{proposition}(Young inequality for mixed norm)
    Let $1\leq p_i,q_i,r_i\leq \infty$, for $\phi\in L^{p_1}_x(L^{p_2}_y)$, and $\psi\in L^{p_1}_x(L^{p_2}_y)$, with
    \begin{equation*}
        1 + \frac{1}{r_i} = \frac{1}{p_i} + \frac{1}{q_i} \ \ \  i = 1,2
    \end{equation*}
    Then,
    $(\phi*\psi)\in L^{r_1}_x(L^{r_2}_y) $ and

    \begin{equation*}
        \|\phi*\psi\|_{r_1,r_2} \leq C(p,q)\|\phi\|_{p_1,p_2}\|\psi\|_{q_1,q_2}.
    \end{equation*}
\end{proposition}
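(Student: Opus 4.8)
The plan is to reduce the mixed-norm statement to the classical scalar Young convolution inequality, namely $\|f*g\|_{L^r}\le \|f\|_{L^p}\|g\|_{L^q}$ whenever $1+1/r=1/p+1/q$, which I take as known. The idea is to apply this scalar inequality once in each of the two variables and to glue the two applications together by Minkowski's integral inequality. (Here $\psi$ should of course be taken in $L^{q_1}_x(L^{q_2}_y)$, as the right-hand side makes clear.)

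First I would introduce the profile functions obtained by taking the inner $y$-norm: set $F(x)\defeq \|\phi(x,\cdot)\|_{L^{p_2}_y}$ and $G(x)\defeq \|\psi(x,\cdot)\|_{L^{q_2}_y}$, so that $\|F\|_{L^{p_1}_x}=\|\phi\|_{p_1,p_2}$ and $\|G\|_{L^{q_1}_x}=\|\psi\|_{q_1,q_2}$. Writing the full convolution as an iterated integral,
\[
(\phi*\psi)(x,y)=\int\Big(\int \phi(x-x',y-y')\,\psi(x',y')\,dy'\Big)\,dx',
\]
I observe that for each fixed $x'$ the inner integral is a convolution in $y$ of $\phi(x-x',\cdot)$ with $\psi(x',\cdot)$. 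Applying the scalar Young inequality in $y$ (with exponents $p_2,q_2,r_2$, using the hypothesis $1+1/r_2=1/p_2+1/q_2$) yields, pointwise in $x$ and $x'$,
\[
\Big\|\int \phi(x-x',\cdot-y')\,\psi(x',y')\,dy'\Big\|_{L^{r_2}_y}\le F(x-x')\,G(x').
\]

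Next I would take the $L^{r_2}_y$ norm of $(\phi*\psi)(x,\cdot)$ and push it through the outer $x'$-integral by Minkowski's integral inequality (legitimate since $r_2\ge 1$), then compare with the scalar convolution $F*G$:
\[
\|(\phi*\psi)(x,\cdot)\|_{L^{r_2}_y}\le \int\Big\|\int \phi(x-x',\cdot-y')\psi(x',y')\,dy'\Big\|_{L^{r_2}_y}\,dx'\le \int F(x-x')G(x')\,dx'=(F*G)(x).
\]
Taking finally the $L^{r_1}_x$ norm of both sides and invoking the scalar Young inequality a second time, now in $x$ (exponents $p_1,q_1,r_1$, using $1+1/r_1=1/p_1+1/q_1$), gives
\[
\|\phi*\psi\|_{r_1,r_2}=\big\|\,\|(\phi*\psi)(x,\cdot)\|_{L^{r_2}_y}\,\big\|_{L^{r_1}_x}\le \|F*G\|_{L^{r_1}_x}\le \|F\|_{L^{p_1}_x}\|G\|_{L^{q_1}_x}=\|\phi\|_{p_1,p_2}\|\psi\|_{q_1,q_2},
\]
which is the claim with $C(p,q)=1$.

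The computation is essentially bookkeeping, so I do not expect a serious obstacle; the only genuinely delicate points to address are the rigorous justification of the manipulations — joint measurability of the integrand so that Tonelli/Fubini legitimizes the iterated-integral representation and the pointwise-in-$x'$ estimates — and the degenerate endpoint cases in which one or more exponents equals $\infty$, where the relevant norms become essential suprema and Minkowski's integral inequality must be read in the appropriate limiting sense. If instead one wishes to track the optimal constant rather than settle for $C(p,q)=1$, one would simply invoke the sharp scalar Young inequality at each of the two steps.
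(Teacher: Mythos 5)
Your proof is correct. The paper states this proposition without proof, treating it as a standard fact, and your argument --- two applications of the scalar Young inequality (first in $y$ for fixed $x,x'$, then in $x$ for the profile functions $F$ and $G$), glued together by Minkowski's integral inequality --- is the standard derivation and yields the stated bound with constant $1$; you are also right that the hypothesis should read $\psi\in L^{q_1}_x(L^{q_2}_y)$, and the caveats you flag (joint measurability via Tonelli, and reading the endpoint cases with exponents equal to $\infty$ as essential suprema) are the only points needing care, none of which causes an obstruction.
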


We will use the following estimates on the fundamental solution of the heat equation.
\begin{proposition}
    \label{prop:funHesti}
    Let $(g_t)_{t>0}$ be the fundamental solution of the heat equation on either $\dR^2\times \mathbb{T}_{2\pi}$ or $\mathbb{T}^2_1\times \mathbb{T}_{2\pi}$.
    Then $\forall t > 0, 1 \leq p < \infty$ we have the following estimates,
    \begin{align*}
        f_p^0(t) &\defeq  \|g_t\|_{p,1}  \leq C_p\Big(1+ \frac{1}{t^\frac{p-1}{p}}\Big),\\
        f^x_{p}(t) & \defeq \max_{i=1,2}\|\partial_{x_i} g_t\|_{p,1}  \leq C_p \Big( 1+\frac{1}{t^{\frac{p-1}{p} + \frac{1}{2}}}\Big),\\
        f^\theta_{p}(t) & \defeq \|\partial_\theta g_t\|_{p,1}  \leq C_p\Big( 1+ \frac{1}{t^{\frac{p-1}{p} + \frac{1}{2}}}\Big).
    \end{align*}

    Furthermore, for $1\leq r<\frac{p}{p-1}$, we have that $f^0_p \in L^r([0,T])$ and
    \begin{equation*}
          F^0_{r,p}(T) 
        \defeq  \|f^0_p\|_{L^r([0,T])} \leq C_{p,r}\Big(T^\frac{1}{r} + T^{\frac{1}{r} - \frac{p-1}{p}}\Big),
    \end{equation*}
   Similarly, for $1\leq r<\frac{2p}{3p-2}$, we have that $ f^{x}_p, f^\theta_p  \in L^r([0,T])$,  introducing the following notations:
   \begin{equation*}
        F^x_{r,p}(T)\defeq \|f^x_p\|_{L^r([0,T])}, F^\theta_{r,p}(T) \defeq \|f^\theta_p\|_{L^r([0,T])}.
   \end{equation*}
   We have the following estimates:
    \begin{align*}
        F^x_{r,p}(T) , F^\theta_{r,p}(T) \leq C_{p,r}\Big( T^\frac{1}{r}+ T^{\frac{1}{r} - \frac{p-1}{p} + \frac{1}{2}}\Big).
    \end{align*}
\end{proposition}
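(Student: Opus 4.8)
The plan is to exploit the product structure of the domain. On both $\dR^2\times\mathbb{T}_{2\pi}$ and $\mathbb{T}^2_1\times\mathbb{T}_{2\pi}$ the Laplacian splits as $\Delta_x+\Delta_\theta$ with commuting summands, so the heat semigroup factorizes and the fundamental solution is a tensor product
\begin{equation*}
    g_t(x,\theta)=G_t(x)\,K_t(\theta),
\end{equation*}
where $G_t$ is the heat kernel of $\Delta_x$ on $\mathcal{D}$ and $K_t$ the heat kernel of $\Delta_\theta$ on $\mathbb{T}_{2\pi}$. Both factors are nonnegative and $\int_{\mathbb{T}_{2\pi}}K_t(\theta)\,d\theta=1$ for every $t>0$. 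First I would use this to collapse the inner $L^1_\theta$ integral in the mixed norm. For $g_t$ and for $\partial_{x_i}g_t=(\partial_{x_i}G_t)K_t$ the $\theta$-mass of $K_t$ is $1$, so
\begin{equation*}
    \|g_t\|_{p,1}=\|G_t\|_{L^p(\mathcal{D})},\qquad \|\partial_{x_i}g_t\|_{p,1}=\|\partial_{x_i}G_t\|_{L^p(\mathcal{D})},
\end{equation*}
whereas for $\partial_\theta g_t=G_t\,(\partial_\theta K_t)$ the norm factorizes as $\|\partial_\theta g_t\|_{p,1}=\|G_t\|_{L^p(\mathcal{D})}\,\|\partial_\theta K_t\|_{L^1(\mathbb{T}_{2\pi})}$.

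Next I would compute each factor by Gaussian scaling. On $\dR^2$ one has $G_t(x)=(4\pi t)^{-1}e^{-|x|^2/4t}$, and the substitution $x=\sqrt{t}\,y$ gives $\|G_t\|_{L^p(\dR^2)}=C_p\,t^{-(p-1)/p}$; writing $\partial_{x_i}G_t=-\tfrac{x_i}{2t}G_t$ and scaling again produces $\|\partial_{x_i}G_t\|_{L^p(\dR^2)}=C_p\,t^{-(p-1)/p-1/2}$, the extra half-power being the scaling weight of one spatial derivative. Applying the same one-dimensional scaling to $K_t$, represented through the method of images as the periodization of the line kernel, yields the clean bound $\|\partial_\theta K_t\|_{L^1(\mathbb{T}_{2\pi})}\le\|\partial_\theta G^{\mathrm{1D}}_t\|_{L^1(\dR)}=C\,t^{-1/2}$. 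Multiplying, $\|\partial_\theta g_t\|_{p,1}$ inherits the same exponent $-(p-1)/p-1/2$ as the spatial derivative, which is exactly the announced pointwise rate.

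To reach the stated $(1+t^{-\cdots})$ form I would split into small and large times. For $t\le 1$ the periodization differs from a single Gaussian by a smooth, exponentially small remainder, so the $\dR^2$ and line computations transfer verbatim to the compact factors and deliver the singular powers above. For $t\ge 1$ each kernel relaxes to its equilibrium — the uniform measure on the compact factors, a spreading Gaussian on $\dR^2$ — so $\|G_t\|_{L^p}$, $\|\partial_{x_i}G_t\|_{L^p}$ and $\|\partial_\theta K_t\|_{L^1}$ are all uniformly bounded; this is what the additive $1$ absorbs. Combining the two regimes gives the three pointwise estimates on all of $(0,\infty)$ for both choices of $\mathcal{D}$.

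Finally, the $L^r_t$ bounds follow by integrating the pointwise estimates: with $(1+t^{-a})^r\lesssim 1+t^{-ar}$, the only issue is the integrability of $t^{-ar}$ at the origin, which holds iff $ar<1$. For $f^0_p$ one has $a=(p-1)/p$, hence the range $r<p/(p-1)$ and a bound of the form $T^{1/r}+T^{1/r-(p-1)/p}$; for $f^x_p$ and $f^\theta_p$ one has $a=(p-1)/p+1/2=(3p-2)/(2p)$, hence $r<2p/(3p-2)$ and a bound $T^{1/r}+T^{1/r-a}$, which gives the claimed $F^x_{r,p},F^\theta_{r,p}$ estimates. I expect the genuinely delicate point to be the torus heat-kernel control — bounding $\|\partial_\theta K_t\|_{L^1}$ uniformly and justifying the small-time transfer from the line kernel on the compact factors — since everything on $\dR^2$ reduces to explicit Gaussian integrals, after which the time integration is an elementary computation tracking the two exponent thresholds.
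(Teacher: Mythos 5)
Your proposal is correct and follows the same overall strategy as the paper: factor the fundamental solution as a tensor product of one-dimensional kernels, reduce the mixed norms to one-dimensional estimates (using that the $\theta$-kernel has unit mass and that $\int \partial_\theta K_t\,d\theta$ plays no role in the $L^1_\theta$ norm of the derivative), and then integrate the pointwise rates in time, tracking the thresholds $r<\tfrac{p}{p-1}$ and $r<\tfrac{2p}{3p-2}$. The one place where you diverge technically is in how the $L^p$ norms of the compact factors are obtained. The paper does not rescale the periodized Gaussian directly; it uses the interpolation inequality $\|\eta\|_p\le\|\eta\|_1^{1/p}\|\eta\|_\infty^{(p-1)/p}$, gets the $L^1$ bounds on the torus by comparison with the line kernel (exactly as you do for $\|\partial_\theta K_t\|_{L^1}$), and gets the $L^\infty$ bounds on the torus from the Fourier representation $\eta^{\mathbb{T}}_t(x)=1+2\sum_{n\ge1}e^{-tn^2}\cos(nx)$, which yields $\|\eta^{\mathbb{T}}_t\|_\infty\le 1+\sqrt{\pi/t}$ and $\|\partial_x\eta^{\mathbb{T}}_t\|_\infty\le \tfrac{e^{-1/2}}{2t}+\sqrt{\tfrac{\pi}{4t}}$ uniformly in $t$, so the additive constant in $C_p(1+t^{-a})$ appears automatically without any small-time/large-time splitting. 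Your route — direct scaling for $t\le1$ with an ``exponentially small remainder'' from the images, and relaxation to equilibrium for $t\ge1$ — is also valid, but the remainder step deserves one more line: for $p>1$ the $L^p$ norm of the periodization over a fundamental domain is not simply dominated by the $L^p(\dR)$ norm of the line kernel (that identity only holds at $p=1$), so you need to invoke the exponential decay of the off-center images to sum the triangle inequality. The paper's interpolation argument sidesteps this entirely, which is what it buys; your version is slightly more self-contained on $\dR^2$ where everything is an explicit Gaussian integral.
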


The proof is given in the Appendix.

Finally, we present the following Grönwall-type lemma with delay. Although we did not find this exact form in the literature, the proof is an adaptation of classical arguments and is provided in the Appendix.

\begin{proposition}[Gr\"onwall type inequality]
    \label{prop:SGrnwllIneq}
    Let $4< p \leq \infty$, $\phi\in L^\infty_+([0,T])$ satisfies the inequality,
    \begin{equation*}
        \phi(t) \leq c_0(t) + \int^t_0 \left(\frac{ 1}{(t-s)^{\frac{1}{p} + \frac{1}{2}}} + 1\right) c_1(s) \phi(s) ds, \text{ for } a.e \ \ t\in [0,T],
    \end{equation*}
    where $c_1 \in L^p([0,T])_+$ and $c_0 \in L^\infty([0,T])_+$ is non-decreasing. With the convention that, if $p = \infty$, then $\frac{1}{p}=0$.
    
    Then,

    \begin{equation*}
        \phi(t) \leq c_0(t)M_p(\|c_1\|_{L^p(0,t)}, t), \text{ for } a.e \ \ t\in [0,T],
    \end{equation*}

    where $M_p : (\dR_+)^2 \mapto [1,\infty)$ is a positive non-decreasing continuous function only depending on $p$, and of at most exponential polynomial growth.
\end{proposition}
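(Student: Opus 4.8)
The plan is to read the inequality as a Neumann iteration for the positive integral operator $\mathcal{T}\psi(t) \defeq \int_0^t K(t,s)\,c_1(s)\,\psi(s)\,ds$, with kernel $K(t,s) = (t-s)^{-\alpha}+1$ and $\alpha \defeq \tfrac1p+\tfrac12$, and to extract an explicit Mittag–Leffler-type majorant. First I would reduce the kernel to a pure power: for $0<s<t$ one has $K(t,s)\le C\,(1+t^{\alpha})\,(t-s)^{-\alpha}$ (if $t-s\le 1$ the singular term dominates, otherwise $1\le t^{\alpha}(t-s)^{-\alpha}$), so it suffices to work with the kernel $(t-s)^{-\alpha}$ up to the constant $C_t\defeq C(1+t^\alpha)$. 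Since $K\ge 0$ and $c_1\ge 0$, $\mathcal{T}$ is order-preserving; hence the hypothesis $\phi\le c_0+\mathcal{T}\phi$ iterates (by induction, applying $\mathcal T$) to $\phi\le \sum_{k=0}^{n-1}\mathcal{T}^k c_0+\mathcal{T}^n\phi$ for every $n$. The whole task then reduces to a quantitative bound on the $n$-fold iterated kernel, followed by summation and a vanishing-remainder argument.

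The heart of the proof is the following estimate on the $n$-th iterate, where I deliberately postpone Hölder's inequality to a single application on the ordered simplex $\Sigma_n\defeq\{t>s_1>\dots>s_n>0\}$. Writing $\mathcal{T}^n c_0(t)$ as an integral over $\Sigma_n$ of $\prod_{l=1}^n K(s_{l-1},s_l)\prod_{l=1}^n c_1(s_l)\,c_0(s_n)$ with $s_0=t$, I bound $c_0(s_n)\le c_0(t)$ (monotonicity) and apply Hölder on $\Sigma_n$ with conjugate exponents $p',p$. The coefficient part gives $\big(\int_{\Sigma_n}\prod_l c_1(s_l)^p\big)^{1/p}\le (n!)^{-1/p}\|c_1\|_{L^p(0,t)}^{\,n}$, the factor $1/n!$ arising exactly because integrating the symmetric product over the ordered simplex picks up one of the $n!$ orderings of $[0,t]^n$. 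The kernel part is controlled by the Dirichlet/Beta iterated fractional-integral identity
\[
\int_{\Sigma_n}\prod_{l=1}^n (s_{l-1}-s_l)^{-\alpha p'}\,d s_1\cdots ds_n \;=\; \Gamma(1-\alpha p')^{\,n}\,\frac{t^{\,n(1-\alpha p')}}{\Gamma\big(n(1-\alpha p')+1\big)},
\]
which is valid precisely when $\alpha p'<1$; and indeed $\alpha p'=\tfrac{p+2}{2(p-1)}<1\iff p>4$, which is where the hypothesis $p>4$ is used. Combining, with $\beta_0\defeq 1-\alpha p'\in(0,1)$ and $a\defeq\|c_1\|_{L^p(0,t)}$, I get $\mathcal{T}^n c_0(t)\le c_0(t)\,z^n\big/\big[\Gamma(n\beta_0+1)^{1/p'}(n!)^{1/p}\big]$ with $z=z(a,t)\defeq C_t\,\Gamma(\beta_0)^{1/p'}\,a\,t^{\beta_0/p'}$.

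To conclude I would sum the series and define $M_p(a,t)\defeq\sum_{n\ge 0} z(a,t)^n\big/\big[\Gamma(n\beta_0+1)^{1/p'}(n!)^{1/p}\big]$. The double factorial gain in the denominator makes this converge for all values of the arguments, and applied to $\phi\le\|\phi\|_{L^\infty}$ in place of $c_0$ it also forces $\mathcal{T}^n\phi(t)\to 0$ (the general term of a convergent series), so letting $n\to\infty$ in the iteration yields $\phi(t)\le c_0(t)\,M_p(\|c_1\|_{L^p(0,t)},t)$. It remains to check the stated properties of $M_p$: it depends only on $p$ as a function of $(a,t)$ (since $\alpha,\beta_0,\Gamma(\beta_0)$ and the exponents depend only on $p$, and $z$ is an explicit function of $(a,t)$), it satisfies $M_p\ge 1$ (the $n=0$ term), it is continuous and non-decreasing in each argument (a power series with non-negative coefficients, composed with the non-decreasing map $z(a,t)$), and its growth is that of a Mittag–Leffler/entire function of finite order, hence bounded by the exponential of a polynomial in $(a,t)$. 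The $p=\infty$ case (convention $1/p=0$, $p'=1$) collapses to the classical Henry–Grönwall computation with the Mittag–Leffler function $E_{1-\alpha}$.

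The main obstacle is the simultaneous control of the iterated \emph{singular} kernel and the merely $L^p$ coefficient $c_1$: applying Hölder separately at each iteration step collapses the convolution structure of the power kernel and destroys the factorial decay, leaving only a geometric bound $(C a t^{\gamma})^n$ that diverges once $a$ or $t$ is large. The decisive idea is therefore to keep the full iterated kernel intact and use Hölder only once on the $n$-simplex, so that the combinatorial $1/n!$ from the ordering and the Beta-function $1/\Gamma(n\beta_0+1)$ from the power kernel combine into genuine Mittag–Leffler decay; verifying the integrability condition $\alpha p'<1$ (equivalently $p>4$) that underlies the Dirichlet identity is the quantitative crux of the argument.
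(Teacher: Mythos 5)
Your argument is correct, but it follows a genuinely different route from the paper. The paper implements Pazy's finite-iteration scheme: it composes the inequality with itself a bounded number of times, tracking at stage $n$ a kernel exponent $r_n$ obeying $r_{n+1}=2r_n+\tfrac{p-1}{p}$, so that $r_{n^*}\geq 0$ after finitely many steps precisely because $p>4$; once the singularity has been iterated away it concludes with the classical Gr\"onwall lemma, and $M_p$ emerges as $P^{n^*}(t)\exp(\cdots)$ with recursively defined non-decreasing factors $P^n,Q^n$. You instead sum the full Neumann series $\sum_n \mathcal{T}^n c_0$, keeping the $n$-fold iterated kernel intact and applying H\"older once on the ordered simplex, so that the $1/n!$ from the ordering and the $1/\Gamma(n\beta_0+1)$ from the Dirichlet--Beta identity produce Mittag--Leffler decay; the condition $\alpha p'<1\iff p>4$ enters at exactly the same structural point (it is what makes the singular kernel $p'$-integrable, respectively what makes the paper's exponent recursion eventually reach $0$). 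Your computations check out: the reduction $K(t,s)\leq C(1+t^\alpha)(t-s)^{-\alpha}$, the value $\alpha p'=\tfrac{p+2}{2(p-1)}$, the simplex identities, the convergence and entire-function growth of the resulting series, and the vanishing of the remainder $\mathcal{T}^n\phi$ are all sound, and the resulting $M_p$ has all the stated properties (including the uniform treatment of $p=\infty$, where the series is the classical Henry majorant). What your approach buys is a closed-form, one-shot majorant and a cleaner conceptual picture; what the paper's approach buys is elementarity --- no infinite series, no simplex combinatorics, only finitely many applications of H\"older and one classical Gr\"onwall --- at the cost of bookkeeping the recursive constants.
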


\subsection{Main results}
Let us start with the notion of solution we consider here.
\begin{definition}
    Let $4< p < \infty$, and $T>0$. A solution to the system \eqref{sys:FormidicaeNrmd}, is a couple $(c,\rho)$ with $c \in W^{1,2}_p$ and $\rho \in C_t(L^p_x(L^1_\theta)_+\cap L^1_{x,\theta})$, such that $c$ is a solution of the first equation in Sobolev space and $\rho$ is a mild solution to the Fokker-Planck equation, that is, it satisfies,
    \begin{equation*}
        \rho_t = \rho_0 * g_t - \chi \int _0^t (\partial_\theta g_{t-s} * (B_s\rho_s))ds - \lambda \int^t_0 (\nabla_x g_{t-s} * (v\rho_s))ds, \forall t \in [0,T],
    \end{equation*}
    where $B \in L^p_{t,x}(L^\infty_\theta)$ is defined as,
    \begin{equation*}
        B(t,\theta,x) = v^\perp(\theta) \cdot \nabla c(t,x) + \tau v^\perp(\theta) \cdot \nabla^2 c(t,x) v(\theta).
    \end{equation*}
\end{definition}

The above definition is well-possed under the given condition on $p$. 

Indeed, since the product $B\rho \in L^p_t(L^{p/2}_{x}(L^1_\theta))$, Young convolution inequality implies that for \textit{a.e} $t \in [0,T]$, $(\partial_\theta g_s * B_s\rho_s) \in L^p_x(L^1_\theta)$. And recalling the estimate on the fundamental solution,
\begin{equation*}
    \|\partial_\theta g_t\|_{\frac{p}{p-1},1} \leq C_p (1 + t^{-\frac{1}{p}-\frac{1}{2}}).
\end{equation*}

H\"older inequality in time ensures that $(\partial_\theta g_{t-\cdot} * B\rho) \in L^1_t(L^p_x(L^1_\theta))$ and the integral equation above is well defined for any $p>4$. 

As we shall see in Theorem~\ref{thm:FokkerPlanckMild}, if such $\rho$ exists then it is also a weak solution to the Fokker-Planck equation in the distributional sense. Our main result is the following theorem:
\begin{theorem}
    \label{thm:existuniqPara}
    Let $4<p< \infty$, suppose that $\gamma \geq 0, \sigma > 0, \lambda > 0$ and  $\chi \geq 0$.
    
    Then, 
    for any initial condition $\rho_0 \in L^p_x(L^1_\theta)_+ \cap L^1_{x,\theta}$, $c_0 \in W^{2-2/p}_p$,
    there exists a unique global solution $(c,\rho) \in L^p_{loc}(\dR_+,W^2_p)\times C(\dR_+, L^p_x(L^1_\theta)_+ \cap L^1_{x,\theta})$ of \eqref{sys:FormidicaeNrmd}, such that $\rho$ stays positive, its mass is preserved for all times, and it is a distributional solution to the Fokker-Planck equation.
\end{theorem}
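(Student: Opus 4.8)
The plan is to run a fixed-point argument for the coupled map $\rho \mapsto c[\rho] \mapsto B[c] \mapsto \rho$, first locally in time, and then to promote local existence to a global statement by ruling out finite-time blow-up via an a priori bound on the spatial marginal. First I would set up the solution operator: given $\rho \in C_t(L^p_x(L^1_\theta))$, the source $\int \rho\, d\theta$ lies in $C_t(L^p_x)$, so maximal parabolic regularity for the first equation (with $c_0 \in W^{2-2/p}_p$) produces a unique $c = c[\rho] \in W^{1,2}_p$ and, crucially, $\nabla^2 c \in L^p_{t,x}$. Hence $B = v^\perp\cdot\nabla c + \tau\, v^\perp\cdot\nabla^2 c\, v \in L^p_{t,x}(L^\infty_\theta)$. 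Inserting this $B$ into the Duhamel (mild) formula defines a second map returning $\rho$; composing the two gives a self-map whose fixed point is the desired solution.

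Next I would estimate the mild map. By Hölder in $x$ the product satisfies $\|B_s\rho_s\|_{p/2,1} \le \|B_s\|_{p,\infty}\|\rho_s\|_{p,1}$, so Young's convolution inequality for mixed norms together with the kernel bound $\|\partial_\theta g_{t-s}\|_{p/(p-1),1} \le C(t-s)^{-1/p-1/2}$ from Proposition~\ref{prop:funHesti} yields $\|\partial_\theta g_{t-s}*(B_s\rho_s)\|_{p,1} \le C(t-s)^{-1/p-1/2}\|B_s\|_{p,\infty}\|\rho_s\|_{p,1}$; the transport term, pairing $v\rho_s \in L^p_x(L^1_\theta)$ with $\nabla_x g_{t-s} \in L^1_x(L^1_\theta)$, is handled identically but with the milder weight $(t-s)^{-1/2}$. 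The condition $p>4$ is exactly what makes the dominant kernel $(t-s)^{-1/p-1/2}$ integrable against $c_1 \in L^p$ in the sense required by Proposition~\ref{prop:SGrnwllIneq}, and the smallness furnished by the integrability estimates $F^x_{r,p}, F^\theta_{r,p}(T_0) \to 0$ of Proposition~\ref{prop:funHesti} makes the self-map a contraction on $C([0,T_0], L^p_x(L^1_\theta))$ for $T_0$ small, giving local existence and uniqueness.

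The main obstacle is ruling out finite-time blow-up so as to extend the local solution globally, and here I would exploit the decoupling of Lemma~\ref{lem:average}. Integrating the Fokker–Planck equation in $\theta$ annihilates the $\partial_\theta$ term, so the marginal $\bar\rho = \int \rho\, d\theta$ solves an advection–diffusion equation $\partial_t\bar\rho = \Delta_x\bar\rho - \lambda\nabla_x\cdot J$ whose flux $J = \int v\rho\, d\theta$ obeys $|J| \le \bar\rho$ pointwise because $|v(\theta)|=1$; writing $J = \bar\rho\, w$ with $|w|\le 1$ exhibits a Fokker–Planck equation with bounded drift, entirely independent of $\chi$ and of the Hessian. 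This furnishes an $L^p_x$ bound on $\bar\rho(t)$ that does not degenerate in finite time, which feeds back into the first equation to bound $\|\nabla^2 c(t)\|_{L^p_x}$ and hence $c_1(t) \defeq \|B(t)\|_{p,\infty}$ in $L^p([0,T])$ on every finite interval. Setting $\phi(t) = \|\rho(t)\|_{p,1}$, the mild estimate then takes precisely the delayed-Grönwall form of Proposition~\ref{prop:SGrnwllIneq}, yielding a bound on $\phi$ of at most exponential growth; since this precludes blow-up, the continuation criterion forces the maximal time to be $+\infty$.

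Finally I would record the qualitative properties. Positivity of $\rho$ follows from the probabilistic (or maximum-principle) interpretation of the linear Fokker–Planck equation with the now-fixed drift $B$ developed in Section~\ref{sec:LinFP}; mass conservation follows by integrating the equation over $\mathcal{D}\times\mathbb{T}_{2\pi}$, all flux terms vanishing by periodicity or spatial decay; and the passage from mild to distributional solution is exactly Theorem~\ref{thm:FokkerPlanckMild}. Global uniqueness follows from the local contraction together with the global a priori bounds, or equivalently by a direct Grönwall argument applied to the difference of two solutions.
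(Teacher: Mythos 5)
Your proposal is correct and follows essentially the same route as the paper: a contraction/fixed-point argument on the coupled map using maximal $L^p$ parabolic regularity for $c$, the mild (Duhamel) formulation with the heat-kernel bounds of Proposition~\ref{prop:funHesti} for $\rho$, the averaging Lemma~\ref{lem:average} to obtain a $\chi$-independent bound on $\int\rho\,d\theta$ and hence on $B$, and the delayed Gr\"onwall inequality of Proposition~\ref{prop:SGrnwllIneq} to exclude finite-time blow-up, with positivity, mass conservation, and the distributional formulation delegated to the linear theory of Section~\ref{sec:LinFP}. The only cosmetic difference is that the paper runs the iteration on the product space $W^{1,2}_p\times C_t(L^p_x(L^1_\theta))$ and also tracks the $L^1_{x,\theta}$ norm explicitly, but this does not change the substance of the argument.
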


There is no restriction on the parameter $\chi$ to prevent the blow-up in finite time of the solution. The singular attractive coupling is counterbalanced by the uniform bound on the spatial speed and by the regularization effect of the average in $\theta$, when coupling the Fokker-Planck equation in the chemotatic field equation. This is synthesized in the Averaging Lemma~\ref{lem:average}, in the form of the estimate:
\begin{equation*}
    \sup_{t\in [0,T]} \left\|\int \rho_t d\theta\right\|_p  \leq \left\|\int \rho_0 d\theta\right\|_pM_{\infty}(C_\lambda,T),
\end{equation*}
where $M_{\infty}:(\dR_+)^2\mapto \dR_+$ is a positive non-decreasing continuous function independent of $B$. This relies on the uniform ellipticity in the $x$-variable. Formally, we can derive \textit{a priori} this estimate thanks to the divergence form of the Fokker-Planck equation. Integrating with respect to $\theta$ removes the singular drift $B$, resulting in a parabolic equation for the average with a source term that can bounded. Section~\ref{sec:LinFP} addresses the previous property along with other related estimates on the linear Fokker-Planck equation for a given scalar field $B$.

From here, we obtain the following regularity results.

 \begin{theorem}
    \label{thm:FurtherRegularity}
     Suppose that $\rho_0 \in W^{2-2/p}_p\cap L^1_{x,\theta}$ and that $c_0\in C^{2+\alpha}\cap W^{2-2/p}_p$ for some $\alpha \in (0,1)$. For any $T>0$, the unique solution to the system \eqref{sys:Formidicae} is such that $c\in C^{1+\zeta/2,2+\zeta}([0,T], \mathcal{D})$ and $\rho \in W^{1,2}_p([0,T],\mathcal{D})$.
 \end{theorem}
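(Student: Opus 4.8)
The plan is to run a bootstrap that alternately upgrades the regularity of $c$ and of $\rho$, starting from the solution produced by Theorem~\ref{thm:existuniqPara} and the a priori control $\sup_{t\le T}\|\int\rho_t\,d\theta\|_p<\infty$ coming from the Averaging Lemma~\ref{lem:average}. The two engines are, on the one hand, the linear Fokker–Planck estimates of Section~\ref{sec:LinFP}, which turn regularity of the coefficient $B$ into $W^{1,2}_p$-regularity of $\rho$ (using that $\rho_0\in W^{2-2/p}_p$ is exactly the trace space of $W^{1,2}_p$), and on the other hand parabolic regularity for the scalar heat equation satisfied by $c$ (maximal $L^p$-regularity to reach $W^{1,2}_p$, then Schauder estimates to reach $C^{1+\zeta/2,2+\zeta}$). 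The bridge between the two equations is the $\theta$-average $\bar\rho\defeq\int\rho\,d\theta$: integrating a $W^{1,2}_p$-bound for $\rho$ over the compact torus keeps $\bar\rho$ in the two-dimensional anisotropic Sobolev space, which for $p>4$ embeds into a parabolic Hölder space, and this is what ultimately feeds the Schauder step for $c$.

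I would first treat the $c$-equation. From $\bar\rho\in L^\infty_t(L^p_x)$ and $c_0\in W^{2-2/p}_p$, maximal $L^p$-regularity (equivalently, differentiating the Duhamel formula and combining the kernel bounds of Proposition~\ref{prop:funHesti} with Young's inequality) yields $c\in W^{1,2}_p$ on $[0,T]$; by Sobolev embedding on the planar spatial domain one then gets $\nabla c\in C^{\beta,\beta/2}$ with $\beta=1-2/p$, so the first-order part $v^\perp\cdot\nabla c$ of $B$ is already bounded and Hölder. At this stage the second-order part $\tau\,v^\perp\cdot\nabla^2 c\,v$ only lives in $L^p_{t,x}$, so $B\in L^p_{t,x}(L^\infty_\theta)$ as in the definition of solution, and the same bound holds for $\partial_\theta B$ since $v,v^\perp$ are smooth in $\theta$.

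Next I would upgrade $\rho$. Writing the equation as $\partial_t\rho-\Delta_{\theta,x}\rho=-\lambda v\cdot\nabla_x\rho-\chi(\partial_\theta B)\rho-\chi B\,\partial_\theta\rho$ and treating the right-hand side as a source, I would apply the linear Fokker–Planck result of Section~\ref{sec:LinFP}: the first-order terms $\nabla_x\rho,\partial_\theta\rho$ are absorbed into $\|\rho\|_{W^{1,2}_p}$ with a small constant on short intervals by interpolation, and the zeroth-order term is handled using $\rho\in L^\infty_{t,x,\theta}$, which follows from the embedding $W^{1,2}_p\hookrightarrow C^{\gamma}$ valid for $p>4$. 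A short-time fixed point gives $\rho\in W^{1,2}_p$, and the a priori bounds together with the delayed Grönwall inequality of Proposition~\ref{prop:SGrnwllIneq} propagate this to all of $[0,T]$. Once $\rho\in W^{1,2}_p$, the average $\bar\rho$ lies in the planar $W^{1,2}_p$, hence for $p>4$ in a parabolic Hölder class $C^{\zeta/2,\zeta}$; feeding this together with $c_0\in C^{2+\alpha}$ into the Schauder estimate for the $c$-equation gives $c\in C^{1+\zeta/2,2+\zeta}$, which closes the argument.

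The main obstacle is the circular coupling created by the second-order dependence of $B$ on $c$: the $W^{1,2}_p$-regularity of $\rho$ needs $\nabla^2 c$ to be an admissible coefficient, while the Hölder regularity of $\nabla^2 c$ only becomes available after $\rho$ has been upgraded. I would break this circle by the graded order above — using merely the $L^p_{t,x}$-integrability of $\nabla^2 c$ (not its Hölder continuity) to reach $\rho\in W^{1,2}_p$, and deferring the Schauder step for $c$ until $\bar\rho$ has been made Hölder. The delicate point inside this scheme is the control of $\chi B\,\partial_\theta\rho$, where $B$ is only $L^p$ in space: one must balance the integrability of $B$ against the embedding gain for $\rho$, and for $p$ close to $4$ it may be necessary to first establish $\nabla^2 c\in L^q_{t,x}$ for some $q>p$, or to iterate the regularity gain along a fractional Sobolev scale. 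This is precisely where the hypotheses $p>4$ and the full smoothness of the initial data are used.
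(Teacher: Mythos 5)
Your overall architecture inverts the order of the paper's bootstrap, and the step where you diverge is exactly where the argument breaks. You propose to obtain $\rho\in W^{1,2}_p$ \emph{before} $c$ has been upgraded, i.e.\ while the coefficient $B$ still only lies in $L^p_{t,x}(L^\infty_\theta)$ through its dependence on $\nabla^2 c$. But the state space of the Fokker--Planck equation is the three-dimensional $\mathcal{D}\times\mathbb{T}_{2\pi}$, so the term $\chi B\,\partial_\theta\rho$ is a drift with an $L^p_{t,x}$ coefficient in parabolic dimension $3+2=5$: the standard condition to treat such a drift as a lower-order perturbation in $W^{1,2}_p$ maximal regularity is integrability of exponent at least $d+2=5$, whereas the theorem only assumes $p>4$. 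For $p\in(4,5]$ the product $B\,\partial_\theta\rho$ cannot be placed in $L^p_{t,x,\theta}$ by interpolation/absorption (one would need $\partial_\theta\rho\in L^\infty$, which $W^{1,2}_p$ does not give in this dimension range), and the linear results of Section~\ref{sec:LinFP} that you invoke only produce $C_t(L^q_x(L^1_\theta))$ bounds, not $W^{1,2}_p$ ones. You flag this yourself ("for $p$ close to $4$ it may be necessary to \dots iterate along a fractional Sobolev scale"), but that deferred step is the actual content of the theorem, so as written the proof has a genuine gap.

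The paper avoids this obstruction entirely by never asking for $W^{1,2}_p$ regularity of the full density while $B$ is still rough. It first proves that the average $m=\int\rho\,d\theta$ alone lies in $W^{1,2}_p$: integrating the equation in $\theta$ annihilates the singular term $\partial_\theta(B\rho)$, leaving a heat equation for $m$ with source $-\lambda\int v\cdot\nabla_x\rho\,d\theta$, and this source is bounded in $L^\infty_t(L^p_x)$ by a closed Gr\"onwall argument on the mild formulation of $\int v\cdot\nabla_x\rho\,d\theta$, which again uses the cancellation $\int\partial_\theta g\,d\theta=0$ and is where the hypothesis $\rho_0\in W^{2-2/p}_p$ (giving $\nabla_x\rho_0\in L^p$) is consumed --- a different use of that hypothesis than the trace-space role you assign it. Only then does Morrey's embedding give $m\in C^\zeta$, Schauder gives $c\in C^{1+\zeta/2,2+\zeta}$, hence $B,\partial_\theta B\in C^\zeta$ bounded, and only at that final stage is parabolic $L^p$-regularity applied to the $\rho$-equation, now with bounded coefficients. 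If you reorganize your bootstrap so that the averaging cancellation carries the quantity $\int v\cdot\nabla_x\rho\,d\theta$ rather than the full gradient of $\rho$, your argument becomes the paper's.
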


 From this theorem, we can prove the propagation of any further H\"older regularity of the initial data. By successively iterating Schauder estimates \cite[Theorem 8.11.1, p.130]{krylov1996lectures}, using the regularity of one equation in the other. Starting from the H\"older regularity of $\partial_\theta B$ and $B$, suppose that $\rho_0$ is regular, we obtain:
\begin{equation*}
    \rho \in C^{1+\zeta,2+\zeta} \implies m \in C^{1+\zeta,2+\zeta} \implies c \in C^{2+\zeta,4+\zeta} \implies \partial_\theta B, B \in C^{\zeta,2+\zeta}.
\end{equation*}
And we can iterate up to the maximal regularity of the initial data. We formulate this property in the following Corollary.

\begin{corollary}
    \label{cor:BetterReg}
    Under the same assumption as in Theorem~\ref{thm:FurtherRegularity}, any further H\"older regularity on the initial conditions $c_0,\rho_0$ is propagated in the space variable up to any $T$, with a norm depending on the initial condition and $T$. That is, if $c_0 \in C^{2+k+\zeta}\cap W^{2-2/p}_p$ and $\rho_0 \in C^{2+k+\zeta}\cap W^{2-2/p}_p$, for $k \in \mathbb{N}\backslash \{0\}$.
    
    Then for any $T>0$,
    \begin{equation*}
        c\in C^{1+\zeta, 2+k+\zeta}([0,T], \mathcal{D}), \rho \in C^{1+\zeta, 2+k+\zeta}([0,T], \mathcal{D}\times \mathbb{T}).
    \end{equation*}
\end{corollary}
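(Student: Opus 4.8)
The plan is to argue by induction on $k$, making rigorous the cyclic bootstrap sketched just above the statement: at each level the available spatial smoothness of $\rho$ is first transferred to the marginal $m=\int\rho\,d\theta$, then promoted by two orders through the (constant–coefficient) parabolic equation for $c$, handed back to the drift $B$ — which depends on $x$ only through $\nabla c$ and $\nabla^2 c$ — and finally fed into the Fokker--Planck equation for $\rho$ via parabolic Schauder estimates \cite[Theorem 8.11.1, p.130]{krylov1996lectures}. Theorem~\ref{thm:FurtherRegularity} supplies the base level $j=0$: it gives $c\in C^{1+\zeta/2,2+\zeta}$ and $\rho\in W^{1,2}_p$, and since $p>4$ a parabolic Sobolev embedding followed by one Schauder pass (the coefficient $B\sim\nabla^2 c$ being already $C^{\,\cdot\,,\zeta}$) upgrades $\rho$ to a genuine classical solution with $c,\rho\in C^{1+\zeta,2+\zeta}$. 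Throughout I would fix $T>0$ and work with the already-constructed unique solution, so that each equation is read pointwise as a linear parabolic equation with coefficients frozen from the current iterate.

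For the inductive step, assume $c,\rho\in C^{1+\zeta,2+j+\zeta}$ for some $0\le j<k$. Integrating the Fokker--Planck equation over the compact fibre $\mathbb{T}_{2\pi}$ kills the $\partial_\theta$-terms and preserves spatial Hölder norms, so $m\in C^{1+\zeta,2+j+\zeta}(\mathcal{D})$. The equation $\partial_t c=\sigma\Delta_x c-\gamma c+m$ then gains two spatial derivatives by Schauder, giving $c\in C^{1+\zeta,4+j+\zeta}$; because $B$ and $\partial_\theta B$ are trigonometric (hence smooth) in $\theta$ and depend on $x$ only through $\nabla c,\nabla^2 c$, the Hölder product and composition rules yield $B,\partial_\theta B\in C^{\zeta,2+j+\zeta}$ in the space variable. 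Finally I would view the Fokker--Planck equation, in the divergence form \eqref{eq:FormDiffOpB}, as a linear parabolic equation $\partial_t\rho-\Delta_{\theta,x}\rho=F$ with $F=-\lambda v\cdot\nabla_x\rho-\chi\,\partial_\theta(B\rho)$. Here $\partial_\theta(B\rho)=(\partial_\theta B)\rho+B\,\partial_\theta\rho\in C^{\,\cdot\,,2+j+\zeta}$, so the binding term in $F$ is the transport term $v\cdot\nabla_x\rho\in C^{\,\cdot\,,1+j+\zeta}$; a further Schauder application therefore raises $\rho$ by exactly one spatial order to $C^{1+\zeta,2+(j+1)+\zeta}$, using the data $\rho_0\in C^{2+(j+1)+\zeta}$ (available since $j+1\le k$) to carry the estimate up to $t=0$. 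Iterating from $j=0$ to $j=k$ gives the claim, with $c$ in fact one order smoother than stated.

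Two points deserve care, and constitute the main obstacle. First, one must respect the anisotropic parabolic Hölder scale at every turn, checking that the source (for the $c$-equation) and the coefficients (for the $\rho$-equation) carry precisely the spatial order demanded by the higher-order Schauder estimate, and confirming that it is the first-order transport term — not the coupling through $c$, whose two-derivative gain exactly compensates the two derivatives lost in $B\sim\nabla^2 c$ — that fixes the net gain at one spatial order per cycle. Second, since the statement only claims propagation \emph{in the space variable}, I would propagate purely spatial regularity up to $t=0$ by differentiating the $c$- and Fokker--Planck equations in $x$ (the principal parts being $x$-independent and the drift coefficients smooth in $\theta$) and applying Schauder to the differentiated systems with data $\partial_x^\alpha c_0,\partial_x^\alpha\rho_0$; this keeps the time regularity fixed at $C^{1+\zeta}$ and avoids imposing high-order compatibility conditions at the initial time, even though the scheme in fact produces additional temporal smoothness. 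The resulting norms depend on $T$ and on the data through the base estimate of Theorem~\ref{thm:FurtherRegularity}.
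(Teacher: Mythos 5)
Your proposal follows essentially the same route as the paper, whose proof of the corollary is exactly the iterated Schauder bootstrap $\rho \Rightarrow m \Rightarrow c \Rightarrow B,\partial_\theta B \Rightarrow \rho$ sketched just before the statement; your induction on $k$ is a correct formalization of that cycle, with the harmless difference that treating the transport term as a source yields one spatial order per pass instead of two. The only caveat is your parenthetical claim that $c$ comes out one order smoother than stated: near $t=0$ the Schauder estimate for the $c$-equation is capped by $c_0\in C^{2+k+\zeta}$, so the intermediate assertion $c\in C^{1+\zeta,4+j+\zeta}$ must be read as capped at $2+k+\zeta$ for $j$ close to $k$ (which does not affect the closure of the induction, since the limiting term is the source of regularity $1+j+\zeta$).
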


From Theorem~\ref{thm:existuniqPara} and Corollary~\ref{cor:BetterReg}, we immediately obtain the existence and uniqueness of the Mckean-Vlasov SDE, if the initial data are sufficiently smooth.
\begin{theorem}
    \label{thm:MckVExistuniq}
    Let $((\Omega,(\mathcal{F}_s)_s,\mathbb{P}),(W^{1,1},W^{1,2},W^{2}))$ be a filtered probability space with the standard hypothesis, equipped with a 3-dimensional Brownian motion. Let $c_0 \in C^{3+\zeta}(\mathcal{D})\cap W^{2-2/p}_p(\mathcal{D})$ and let $(X_0,\Theta_0)$ be $\mathcal{F}_0$-measurable random variables with a finite second moment, with joint law $\rho_0 \in \mathcal{P}(\mathcal{D}\times \mathbb{T})$, we suppose that $\rho_0$ has a density w.r.t the Lebesgue measure and that it is in the space $C^{2+\zeta}\cap W^{2-2/p}_p$.

    Then, there exists a unique strong solution to the McKean-Vlasov equation \eqref{sys:mckeanvlasov}, with initial data $(c_0,(X_0,\Theta_0))$.
\end{theorem}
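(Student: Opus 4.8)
The plan is to exploit the fact that the McKean--Vlasov nonlinearity enters only through the chemical field $c^m$, so that the problem decouples once the macroscopic system has been solved. First I would solve the deterministic PDE system \eqref{sys:Formidicae} with the prescribed initial data $(\rho_0,c_0)$: by Theorem~\ref{thm:existuniqPara} there is a unique global solution $(c,\rho)$, and under the stated smoothness $c_0\in C^{3+\zeta}$, $\rho_0\in C^{2+\zeta}$, Theorem~\ref{thm:FurtherRegularity} together with the Schauder bootstrap of Corollary~\ref{cor:BetterReg} upgrades $c$ so that $\nabla_x c$ and $\nabla^2_x c$ are bounded and Lipschitz in $x$, uniformly on every $[0,T]$. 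This spatial regularity is precisely what is needed to make the subsequent SDE well-posed.

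Next, I would freeze this $c$ and view the pair of equations for $(X_t,\Theta_t)$ as a standard (non-McKean) It\^o SDE on $\dR^2\times\mathbb{T}_{2\pi}$ with time-dependent coefficients
\[
    \big(\lambda v(\theta),\ \chi B(\theta,\nabla_x c(t,x),\nabla^2_x c(t,x))\big).
\]
The diffusion is constant and the drift is globally Lipschitz in $(x,\theta)$: the $\theta$-dependence is through the smooth bounded maps $v,v^\perp$, and the $x$-dependence through the Lipschitz maps $\nabla_x c(t,\cdot),\nabla^2_x c(t,\cdot)$; the drift is moreover bounded. Classical existence--uniqueness theory for Lipschitz SDEs then yields, on the given filtered probability space and driving Brownian motion, a unique strong solution $(X_t,\Theta_t)$ with initial law $\rho_0$ and finite second moment.

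It remains to verify the self-consistency constraint $m_t=\mathcal{L}(X_t)$, i.e.\ that the $x$-marginal of the law of the SDE solution coincides with the marginal $\int\rho_t\,d\theta$ that produced $c$. Applying It\^o's formula to the constructed solution shows that the law $\tilde\rho_t=\mathcal{L}(X_t,\Theta_t)$ is a distributional solution of the linear Fokker--Planck equation $\partial_t\tilde\rho=\mathcal{L}_B\tilde\rho$ with the operator $\mathcal{L}_B$ of \eqref{eq:FormDiffOpB} driven by the frozen field $B$. Since $\rho$ solves the same equation (Theorem~\ref{thm:existuniqPara}) and this linear problem is uniquely solvable for the given coefficients (Section~\ref{sec:LinFP}), I conclude $\tilde\rho=\rho$, hence $\mathcal{L}(X_t)=\int\rho_t\,d\theta=m_t$ and $c^m=c$. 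This closes the loop and establishes existence. Uniqueness follows by the same decoupling: any strong solution $((X,\Theta),c)$ yields, via It\^o's formula, a law that solves \eqref{sys:Formidicae} together with $c$, which by Theorem~\ref{thm:existuniqPara} must be the unique PDE solution; this pins down $c$, after which pathwise uniqueness for the frozen Lipschitz SDE forces the processes to coincide.

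The main obstacle is the regularity step: one must ensure $c$ is regular enough in space that $\nabla^2_x c$ is Lipschitz in $x$ (so that the $\Theta$-drift is Lipschitz), which requires carefully chaining the Schauder estimates of Corollary~\ref{cor:BetterReg} and checking that the smoothness hypotheses on $(c_0,\rho_0)$ are exactly sufficient. The second delicate point is the identification $\tilde\rho=\rho$, which rests on a uniqueness statement for the linear Fokker--Planck equation with the frozen coefficients; here one should confirm that the notion of solution produced by It\^o's formula falls within the uniqueness class used in Section~\ref{sec:LinFP}.
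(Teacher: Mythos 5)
Your proposal follows essentially the same route as the paper's proof: fix the unique PDE solution from Theorem~\ref{thm:existuniqPara}, upgrade its regularity via Corollary~\ref{cor:BetterReg} so that the frozen drift $B$ is Lipschitz, solve the resulting linear SDE strongly, and identify its time marginals with the PDE solution through uniqueness of the linearized Fokker--Planck equation. The argument and the order of the steps match the paper's; no changes needed.
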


As the proof is short, we give it below.
\begin{proof}[Proof of Theorem~\ref{thm:MckVExistuniq}]
     From Theorem~\ref{thm:existuniqPara}, there exists a unique solution to the PDE system~\eqref{sys:Formidicae}, further more using Corollary~\ref{cor:BetterReg} the solution has the following regularity:
    \begin{equation*}
        c \in C^{1+\zeta/2,3+\zeta}([0,T],\mathcal{D}).
    \end{equation*}
   Fixing this solution, we see that the drift $B$ associated with $c$ is Lipschitz, thus there exists a unique strong solution of the linearized version of our SDE. The time marginals solve also the linearized version of our PDE. With the same arguments as before, this linear PDE admits uniqueness. Hence the one dimensional time marginal of the law of the solution and our unique solution to the PDE are the same (as the solution of the PDE also satisfies the linearized equation). This ensures existence and uniqueness for the McKean Vlasov equation.
    
\end{proof}

Finally, we prove the global existence and uniqueness of the two-state model, under the following assumptions:

\begin{assumptionp}{$\mathcal{H}^{\alpha,\beta}_\chi$}
    \label{ass:Twostates}
    \begin{enumerate}
        \item $J:L^1((0,2\pi])\mapto L^1((0,2\pi])$, we suppose that it is a \textit{positive}, \textit{mass preserving}, and Lipschitz:
        \begin{equation*}
            \|J[\phi]-J[\psi]\|_{L^1_\theta} \leq C_J \|\psi-\psi\|_{L^1_\theta}, \forall \phi,\psi \in L^1_\theta
        \end{equation*}
        for some constant $C_J>0$. And $J[0] \equiv 0$.
        
        \item Let $G^\alpha, G^\beta: (L^p_x)^2 \mapto L^p_x$, and we suppose that there exists $C_G>0$ such that for any $f,f',g,g' \in L^p_x$:
        \begin{align*}
            \|G^\alpha[f,g]-G^\alpha[f',g']\|_{L^p_x} \leq C_G\left( \|f-f'\|_{L^p_x} + \|g-g'\|_{L^p_x} \right),\\
            \|G^\beta[f,g] - G^\beta[f',g']\|_{L^p_x} \leq C_G\left( \|f-f'\|_{L^p_x} + \|g-g'\|_{L^p_x}\right).
        \end{align*}
        and
        \begin{equation*}
            G^\beta[0,0] \equiv G^\alpha[0,0] \equiv 0.
        \end{equation*}
        \item We suppose that $\alpha,\beta : \mathcal{D}\mapto \dR_+$ are in the space $C(\mathcal{D}, \dR_+)\cap L^\infty_x$.
    \end{enumerate}
\end{assumptionp}

These assumptions allow us to obtain similar estimates as in Theorem~\ref{thm:existuniqPara} and are quite natural from a modeling perspective. The first assumption imposes some regularity on the operator that transfers the mass from one state to another, whereas the second assumption controls the production terms of the two chemotactic fields by the density of ants. The third assumption is more technical and is required to approximate $\alpha$ and $\beta$ by smooth functions.
\begin{theorem}
    \label{thm:Twostates}
    Under Assumptions \eqref{ass:Twostates}, for any initial data:
    \begin{equation*}
        (\rho^\alpha_0,\rho^\beta) \in (L^p_x(L^1_\theta)_+ \cap L^1_{x,\theta})^2,(c^\alpha_0,c^\beta_0) \in (W^{2-2/p}_p)^2,
    \end{equation*}
     there exists a unique global in-time solution of the system \eqref{sys:TwostateFormidicae}, in the space:
     \begin{equation*}
         (c^\alpha,c^\beta,\rho^\alpha,\rho^\beta) \in (L^p_{loc}(\dR_+,W^2_p))^2\times (C(\dR_+, L^p_x(L^1_\theta)_+ \cap L^1_{x,\theta}))^2.
     \end{equation*}
\end{theorem}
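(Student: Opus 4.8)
The plan is to reproduce the three-step architecture of the single-state proof of Theorem~\ref{thm:existuniqPara}---local well-posedness by a contraction mapping, a $\chi$-independent \emph{a priori} bound ruling out finite-time blow-up, and globalization by iteration---while treating the two-state coupling as a Lipschitz perturbation controlled by the hypotheses \eqref{ass:Twostates}. The new structural features are the mass-transfer terms $-\alpha\rho^\alpha + \beta J[\rho^\beta]$ and $-\beta\rho^\beta + \alpha J[\rho^\alpha]$, the production operators $G^\alpha, G^\beta$, and the additive time-homogeneous drifts $D^\alpha, D^\beta$. Since $D^\alpha, D^\beta$ are bounded and smooth (solutions of the elliptic smell equations), they are harmless lower-order additions to $B^\alpha, B^\beta$ and do not alter the singular analysis; the genuinely new work is to propagate the single-state estimates through the coupling.

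\textbf{Local well-posedness.} First I would fix a pair of chemical fields $(c^\alpha, c^\beta) \in W^{1,2}_p$, read off the drifts $B^\alpha, B^\beta \in L^p_{t,x}(L^\infty_\theta)$, and solve the \emph{coupled linear} Fokker-Planck system for $(\rho^\alpha, \rho^\beta)$ in mild form. Because $J$ is Lipschitz with $J[0]\equiv 0$, it is a bounded linear operator on $L^1_\theta$, and together with $\alpha, \beta \in L^\infty_x$ this makes the transfer terms bounded perturbations of the two decoupled mild equations. Hence the kernel estimates of Proposition~\ref{prop:funHesti}, Young's inequality, and the delayed Grönwall inequality of Proposition~\ref{prop:SGrnwllIneq} apply essentially verbatim to the coupled Duhamel system, giving a unique mild solution on $[0,T]$ together with propagation of positivity: the cross-source $\beta J[\rho^\beta]$ is nonnegative whenever $\rho^\beta \ge 0$ (positivity of $J$), so the single-state positivity argument carries over. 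I would then close the loop by feeding the averages $m^\alpha = \int \rho^\alpha\,d\theta$, $m^\beta = \int \rho^\beta\,d\theta$ into $G^\alpha, G^\beta$ and solving the two chemical equations by maximal $L^p$-parabolic regularity; the Lipschitz bound $C_G$ yields a contraction on $(c^\alpha, c^\beta)$ for $T$ small, hence a unique local solution.

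\textbf{The key a priori bound.} The estimate preventing blow-up comes, as in the single-state case, from integrating in $\theta$. Writing $m^\alpha = \int \rho^\alpha\,d\theta$ and $m^\beta = \int \rho^\beta\,d\theta$, the divergence-form drifts $\partial_\theta(B^\alpha\rho^\alpha)$ and the $\theta$-Laplacians integrate to zero, while mass-preservation of $J$ gives $\int J[\rho^\beta]\,d\theta = m^\beta$ and the $x$-only dependence of $\alpha,\beta$ lets them pass through the integral. The crucial observation is that upon summing the two resulting equations the reaction terms cancel \emph{exactly}, so the total average $M \defeq m^\alpha + m^\beta$ solves the scalar drift-diffusion equation
\begin{equation*}
    \partial_t M = \sigma_x \Delta_x M - \lambda \div_x\!\left(\int v\,(\rho^\alpha + \rho^\beta)\,d\theta\right),
\end{equation*}
identical to the one treated in the single-state Averaging Lemma~\ref{lem:average}. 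Since $|v| = 1$ and $\rho^\alpha, \rho^\beta \ge 0$, the source is controlled by $M$ itself, so Lemma~\ref{lem:average} yields $\sup_t \|M_t\|_p \le \|M_0\|_p\, M_\infty(C_\lambda, T)$, \emph{uniformly in} $\chi$. Positivity then gives $0 \le m^\alpha, m^\beta \le M$ pointwise, so each spatial density is uniformly bounded in $L^p_x$; Assumption~2 bounds $G^\alpha, G^\beta$, and maximal regularity propagates this to a uniform $W^2_p$ bound on $c^\alpha, c^\beta$, hence on the drifts $B^\alpha, B^\beta$.

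\textbf{Globalization and the main obstacle.} These $\chi$-independent bounds let me reopen the local solution at its terminal time with data of controlled norm and iterate, covering an arbitrary $[0,T]$ and giving global existence; global uniqueness follows from a Grönwall estimate on the difference of two solutions, where the couplings contribute only the Lipschitz constants $C_J, C_G$. Assumption~3, continuity and boundedness of $\alpha, \beta$, enters through a regularization step: I would first solve with smooth approximations $\alpha_n, \beta_n$, derive the above estimates uniformly in $n$, and pass to the limit. I expect the main obstacle to be verifying that the a priori bound is truly independent of $\chi$, which rests entirely on the exact cancellation of the transfer terms in the equation for the total average $M$; this cancellation in turn requires \emph{both} the mass-preservation of $J$ and the fact that $\alpha, \beta$ depend only on $x$ (so that they commute with $\int \cdot\, d\theta$). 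Any weakening of these hypotheses would reintroduce a $\chi$-dependent source and jeopardize the global bound.
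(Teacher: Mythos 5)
Your proposal is correct and follows the same overall architecture as the paper: a contraction argument for the coupled linear two-state Fokker--Planck system (the paper's Theorem~\ref{thm:TwoStateFokkerPlanckMild}, with positivity of the iterates guaranteed exactly as you say by the positivity of $J$ and of $\alpha,\beta$), a stability estimate in the drifts feeding into a fixed point for the full nonlinear system via maximal $L^p$ regularity for $c^\alpha,c^\beta$ and the Lipschitz hypothesis on $G^\alpha,G^\beta$, and globalization via a $\chi$-independent bound on the spatial averages. The one step where you genuinely diverge is the averaging estimate. You sum the two $\theta$-integrated equations and observe that the transfer terms cancel exactly (using mass preservation of $J$ and the $x$-only dependence of $\alpha,\beta$), so that $M=m^\alpha+m^\beta$ satisfies the single-state equation of Lemma~\ref{lem:average} and inherits the bound $M_\infty(\lambda,T)$, with $m^\alpha,m^\beta\le M$ recovered from positivity. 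The paper instead proves a separate two-state averaging lemma (Lemma~\ref{lem:averageTS}) that keeps the death and birth terms, bounds them by $\|\alpha\|_\infty$, $\|\beta\|_\infty$ and $C_J$ (using $\|\int J[\rho]\,d\theta\|_p=\|\int\rho\,d\theta\|_p$), and absorbs them into the Gr\"onwall constant, yielding $M_\infty(\lambda+C_J,T)$. Both routes deliver what is actually needed, namely independence of the bound from $B^\alpha,B^\beta$ and hence from $\chi$; yours is sharper (no dependence on $C_J$, $\|\alpha\|_\infty$, $\|\beta\|_\infty$) but more fragile, since the exact cancellation is tied to the precise mass-conserving pairing of loss and gain terms in \eqref{sys:TwostateFormidicae} (note the paper itself writes the gain terms as $\alpha J[\nu^\beta]$, $\beta J[\nu^\alpha]$ inside the proof of Theorem~\ref{thm:TwoStateFokkerPlanckMild}, a form for which your cancellation would fail unless $\alpha\equiv\beta$), whereas the paper's estimate survives any bounded, not necessarily mass-balanced, transfer structure. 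Your remarks on the role of Assumption~3 (smoothing $\alpha,\beta$ for the positivity and distributional-solution arguments) and on $D^\alpha,D^\beta$ being absorbed into $B^\alpha,B^\beta\in L^p_{t,x}(L^\infty_\theta)$ match the paper's treatment.
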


Before moving to our main proofs, the next section will focus on deriving estimates for the linear Fokker-Planck equation for a given scalar field $B$.

\section{Linear Fokker-Planck equation}
\label{sec:LinFP}
We here study the linear Fokker-Planck equation. To treat the two-state model, we will consider a general Fokker-Planck equation with a birth term $\eta$ and a death rate $\alpha$. But as far as the system \eqref{sys:FormidicaeNrmd} is concerned we only need to consider the case $\eta\equiv 0$, $\alpha \equiv 0$.

\begin{theorem}
    \label{thm:FokkerPlanckMild}
    Let $4<p\leq \infty$, $T>0$, and $q\geq \frac{p}{p-1}$. For any $B \in L^p_{t,x}(L^\infty_\theta)$, $\eta\in C_t(L^q_x(L^1_\theta)_+ \cap L^1_{x,\theta})$, $\alpha \in C_x \cap L^\infty_x$ and $\rho_0 \in L^q_x(L^1_\theta)_+\cap L^1_{x,\theta}$, there exists a unique $\rho \in C_t(L^q_x(L^1_\theta)_+ \cap L^1_{x,\theta})$ non-negative solution of,
    \begin{equation}
        \label{eq:generalFokkerPlanckLin}
        \rho_t = \rho_0 * g_t - \int _0^t (\partial_\theta g_{t-s} * (B_s\rho_s))ds - \int^t_0 (\nabla_x g_{t-s} * (\lambda v\rho_s)) + \int_0^t (g_{t-s} * (\eta_s + \alpha \rho_s))ds,
    \end{equation}

    with the following growth estimate,

    \begin{equation*}
        \sup_{t\in [0,T]}\|\rho_t\|_{q,1}\leq (\|\rho_0\|_{r,1} + T\sup_{[0,T]}\|\eta_s\|_{q,1})M_p(\|B\|_{p,p,\infty}+ C_\lambda+\|\alpha\|_\infty,t),
    \end{equation*}
    for some $M_p:(\dR_+)^2\mapto\dR_+$ is a positive non-decreasing continuous function, depending on $p$ only.
    Furthermore, the solution is a distributional solution of the Fokker-Planck equation, that is 
    \begin{equation}
        \label{eq:LinFokkPlanckDistributionalSol}
        \int \varphi_t \rho_t  - \int \varphi_0 \rho_0 = \int_0^t \int \Big((\partial_t \varphi + \Delta_{x,\theta}\varphi + B \partial_\theta \varphi + \lambda v \cdot \nabla_x \varphi + \alpha \varphi) \rho + \varphi \eta \Big) dx d\theta ds,
    \end{equation}
\end{theorem}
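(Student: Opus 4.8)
The plan is to prove existence, uniqueness, the growth estimate, and the distributional identity for the mild solution of the linear Fokker-Planck equation \eqref{eq:generalFokkerPlanckLin} via a contraction-mapping argument on the Banach space $C_t(L^q_x(L^1_\theta)_+ \cap L^1_{x,\theta})$, exploiting the kernel estimates of Proposition~\ref{prop:funHesti} and the delayed Grönwall inequality of Proposition~\ref{prop:SGrnwllIneq}. First I would define the map $\Phi$ sending $\rho$ to the right-hand side of \eqref{eq:generalFokkerPlanckLin}, and check that $\Phi$ maps the space to itself. For this I would estimate each Duhamel term separately: using Young's inequality for mixed norms together with the bounds on $\|\partial_\theta g_t\|_{p/(p-1),1}$, $\|\nabla_x g_t\|_{p/(p-1),1}$, and $\|g_t\|_{1,1}$. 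The key observation is that since $B_s\rho_s \in L^q_x(L^1_\theta)$ with the $\theta$-integral absorbing $\|B\|_\infty$ in $\theta$, Young's inequality gives $\|\partial_\theta g_{t-s}*(B_s\rho_s)\|_{q,1} \lesssim \|\partial_\theta g_{t-s}\|_{1,1}\|B_s\rho_s\|_{q,1}$, but the sharper time-singularity control requires pairing the singular-in-time kernel factor against $B \in L^p_t$ via Hölder in time, which is exactly the structure matching Proposition~\ref{prop:SGrnwllIneq} with exponent $p>4$.

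For the growth estimate, I would set $\phi(t) = \sup_{s\le t}\|\rho_s\|_{q,1}$ (or work directly with $t \mapsto \|\rho_t\|_{q,1}$) and bound it by collecting the free term $\|\rho_0\|_{q,1} + T\sup\|\eta_s\|_{q,1}$ plus convolution integrals of the form $\int_0^t (1 + (t-s)^{-1/p-1/2})\,c_1(s)\,\phi(s)\,ds$, where $c_1$ aggregates $\|B_s\|_{\infty}$, the constant $C_\lambda$ from the $\lambda v$ transport term, and $\|\alpha\|_\infty$ from the zeroth-order term; the $\alpha$ and $\lambda v$ contributions involve the milder kernels $g$ and $\nabla_x g$, but the $B$-term supplies the critical $(t-s)^{-1/p-1/2}$ singularity. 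Applying Proposition~\ref{prop:SGrnwllIneq} then yields the stated bound with $M_p(\|B\|_{p,p,\infty} + C_\lambda + \|\alpha\|_\infty, t)$. The same estimates, applied to the difference of two solutions with a common initial datum, give contraction on a short time interval $[0,T_0]$ where $T_0$ depends only on $\|B\|_{p,p,\infty} + C_\lambda + \|\alpha\|_\infty$; since this length is uniform, I would iterate to cover $[0,T]$ and obtain global uniqueness. Positivity of $\rho$ I would obtain either by a standard approximation/maximum-principle argument on the linear parabolic equation, or by noting the mild formulation preserves the positive cone under the iteration since $g_t \ge 0$ and the drift terms appear in divergence form.

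For the distributional identity \eqref{eq:LinFokkPlanckDistributionalSol}, I would test the mild formula against a smooth compactly supported $\varphi$, use the fact that $g$ is the fundamental solution of $\partial_t - \Delta_{x,\theta}$ so that convolving $\varphi$ against the adjoint heat semigroup and integrating by parts moves the $\partial_\theta$ and $\nabla_x$ derivatives off the kernels and onto $\varphi$, then apply Fubini (justified by the integrability already established) to reassemble the weak formulation. The main obstacle I expect is the time-singularity bookkeeping: ensuring that the kernel factor $(t-s)^{-1/p-1/2}$ paired with $B\in L^p_t$ is genuinely integrable in $s$ (which forces $p>4$, equivalently $1/p + 1/2 < 3/4 < 1 - 1/p$) and that the resulting convolution inequality lands in precisely the hypothesis format of Proposition~\ref{prop:SGrnwllIneq}, since the regularity of $\rho$ is only $C_t(L^q_x(L^1_\theta))$ and one cannot afford to differentiate $\rho$ directly. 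Everything else — the positivity, the mass-related bounds, the iteration to globalize — is routine once the contraction and the Grönwall estimate are in place.
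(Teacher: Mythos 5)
Your proposal follows essentially the same route as the paper: a Banach--Picard contraction on $C_t(L^q_x(L^1_\theta)\cap L^1_{x,\theta})$ built from the Duhamel formula, Young's convolution inequality paired with the kernel bounds of Proposition~\ref{prop:funHesti}, the delayed Gr\"onwall inequality of Proposition~\ref{prop:SGrnwllIneq} for the growth estimate and globalization, and approximation for positivity and the distributional identity. Two points deserve correction. First, $B_s\rho_s$ does \emph{not} lie in $L^q_x(L^1_\theta)$: since $B\in L^p_x(L^\infty_\theta)$ in space and $\rho\in L^q_x(L^1_\theta)$, H\"older gives only $B_s\rho_s\in L^{\frac{pq}{p+q}}_x(L^1_\theta)$, so the correct Young pairing is $\|\partial_\theta g_{t-s}\|_{\frac{p}{p-1},1}\,\|B_s\rho_s\|_{\frac{pq}{p+q},1}\leq \|\partial_\theta g_{t-s}\|_{\frac{p}{p-1},1}\,\|B_s\|_{p,\infty}\,\|\rho_s\|_{q,1}$ (the constraint $p>4$ then comes from requiring $(t-s)^{-(\frac1p+\frac12)}$ to be in $L^{p/(p-1)}$ in time, dual to $\|B_s\|_{p,\infty}\in L^p_t$); your displayed estimate with $\|\partial_\theta g_{t-s}\|_{1,1}\|B_s\rho_s\|_{q,1}$ is not available. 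Second, your alternative positivity argument (''the mild formulation preserves the positive cone under the iteration'') fails: the term $-\int_0^t\partial_\theta g_{t-s}*(B_s\rho_s)\,ds$ has no sign, since $\partial_\theta g$ changes sign, so positivity is not propagated by the Picard iterates; one must instead use your first alternative, i.e.\ regularize $B$, $\rho_0$, $\eta$, $\alpha$, apply a maximum-principle or Stampacchia truncation argument to the smooth linear equation, and pass to the limit via the stability estimate --- which is exactly what the paper does in Lemma~\ref{lem:PositivityLinFokkerPlanck}.
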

$\forall \varphi \in C^{1,2}_b, \forall t \in [0,T]$, where $C^{1,2}_b$ denotes the space of functions that are differentiable in time, twice differentiable in space, and bounded along with their derivatives. 

\begin{proof}
    Let $0<u\leq T$ to be specified later, and introduce the Banach space 
    \begin{equation*}
        E = C([0,u],L^q_x(L^1_\theta) \cap L^1_{x,\theta}),    
    \end{equation*}
    equipped with the norm,
    \begin{equation*}
        \|m\|_E = \sup_{t\in [0,u]} \|m_t\|_{q,1} + \|m_t\|_{\frac{p}{p-1},1} + \|m_t\|_{1,1}.
    \end{equation*}

    The interpolation in $L^p_x(L^1_\theta)\cap L^1_{x,\theta}$, ensures that the norm $\|\cdot\|_{\frac{p}{p-1},1}$, is finite. Let $ \Psi : E \mapto E$ be defined by, $\Psi(\nu) = \rho$, where,

    \begin{equation*}
        \rho_t = \rho_0 * g_t - \int^t_0 (\partial_\theta g_{t-s} *( B_s \nu_s)) ds - \int^t_0 (\nabla_x g_{t-s} * (\lambda v \nu_s))ds + \int_0^t (g_{t-s} * (\eta_s + \alpha \nu_s))ds.
    \end{equation*}
    This map $\Psi$ is well-defined, due to Young's inequality, as long as $p > 4$, since:
    \begin{align*}
        B\nu \in L^p_t(L^{\frac{pq}{p+q}}_x(L^1_\theta) \cap L^1_{\theta,x}) & \text{ and }   (s\mapsto \partial_\theta g_{t-s}) \in L^{\frac{p}{p-1}}_{t}(L^{\frac{p}{p-1}}_{x}(L^1_\theta) \cap L^1_{\theta,x}), \\
        v\nu \in L^\infty_t(L^q_x(L^1_\theta)\cap L^1_{x,\theta}) & \text{ and } 
        (s\mapsto \nabla_x g_{t-s}) \in L^1_{t,x,\theta},\\
        (\eta + \alpha \nu) \in L^\infty_t(L^q_x(L^1_\theta)\cap L^1_{x,\theta}) & \text{ and } 
        (s\mapsto g_{t-s}) \in L^1_{t,x,\theta}.\\
    \end{align*}

    Take any $\nu^1,\nu^2 \in E$ and note $\rho^i = \Psi(\nu^i)$ for $i = 1, 2$.
    For $r = \frac{p}{p-1}$ or $ r = q$, using Young convolution inequality for the first with,
    \begin{equation*}
        \begin{cases}
            1 + \frac{1}{r} & = \frac{p - 1}{p} + (\frac{1}{p} + \frac{1}{r})  \ \ (x),\\
            1 + 1 & = 1 + 1 \ \ (\theta),\\
        \end{cases}
    \end{equation*}

    and for the second and third terms with,
    \begin{equation*}
        \begin{cases}
            1 + \frac{1}{r} & = 1 +  \frac{1}{r}  \ \ (x),\\
            1 + 1 & = 1 + 1 \ \ (\theta).\\
        \end{cases} 
    \end{equation*}
    We obtain that,
    
    \begin{align}
        \label{est:LipFPLr}
        \|\rho^1_t - \rho^2_t\|_{r,1} & \leq \int_0^t \| \partial_\theta g_{t-s} * B_s(\nu^1_s - \nu^2_s)\|_{r,1} ds + \int^t_0 \|\nabla_x g_{t-s} * \lambda v(\nu^1_s - \nu^2_s)\|_{r,1} ds \nonumber\\
        &\ \ \ \ \  + \int^t_0 \|g_{t-s} * \alpha(\nu^1_s - \nu^2_s)\|_{r,1} ds, \nonumber\\
            & \leq  \int_0^t \| \partial_\theta g_{t-s}\|_{\frac{p}{p-1},1} \| B_s(\nu^1_s - \nu^2_s)\|_{\frac{pr}{r+p},1} ds + \int^t_0 \|\nabla_x g_{t-s} \|_{1,1} \||\lambda v|(\nu^1_s - \nu^2_s)\|_{r,1} ds\nonumber\\
            &\ \ \ \ \  + \|\alpha\|_\infty \int^t_0 \|g_{t-s}\|_{1,1} \|\nu^1_s - \nu^2_s\|_{r,1} ds, \nonumber\\
            &  \leq  \int_0^t f^\theta_{\frac{p}{p-1}}(t-s)\| B_s\|_{p,\infty} \|\nu^1_s - \nu^2_s\|_{r,1} ds + \lambda \int^t_0 f^x_{1}(t-s)\|\nu^1_s - \nu^2_s\|_{r,1} ds\nonumber\\
            &\ \ \ \ \  + \|\alpha\|_\infty \int^t_0 f_1^0(t-s) \|\nu^1_s - \nu^2_s\|_{r,1} ds, \nonumber\\
            & \leq \big(F^\theta_{\frac{p}{p-1},\frac{p}{p-1}}(u)\|B\|_{p,p,\infty} + \lambda F^x_{1,1}(u) + \|\alpha\|_\infty F^0_{1,1}(u)\big) \sup_{[0,u]}\|\nu^1_s-\nu^2_s\|_{r,1},
    \end{align}

    where we used that $f^\theta_{\frac{p}{p-1}} \in L^{\frac{p}{p-1}}([0,T])$, due to Proposition~\ref{prop:funHesti}, if 
    \begin{equation*}
        \Big(\frac{1}{p} + \frac{1}{2}\Big)\frac{p}{p-1} < 1, \text{that is, if }4 < p.
    \end{equation*}
    Now for the $L^1$ norm consider similarly, the following Young exponents,
    \begin{alignat*}{2}
        & \begin{aligned} & \begin{cases}
                1 + 1 & = 1 + (\frac{1}{p} + \frac{p-1}{p})  \ \ (x),\\
                1 + 1 & = 1 + 1 \ \ (\theta),\\
            \end{cases}
      \end{aligned}
      & \text{ and, } 
        & \begin{aligned} & \begin{cases}
                1 + 1 & = 1 + 1 \ \ (x),\\
                1 + 1 & = 1 + 1 \ \ (\theta).\\
            \end{cases} \\
      \end{aligned}
    \end{alignat*}

    So that,
    {\small
    \begin{align}
        \label{est:LipFPL1}
        \|\rho^1_t - \rho^2_t\|_{1,1} & \leq \int_0^t \| \partial_\theta g_{t-s} * B_s(\nu^1_s - \nu^2_s)\|_{1,1} ds + C_\lambda \int^t_0 \|\nabla_x g_{t-s} * \lambda v(\nu^1_s - \nu^2_s)\|_{1,1} ds, \nonumber\\
         &\hspace{2em} + \int^t_0 \|g_{t-s} * \alpha(\nu^1_s - \nu^2_s)\|_{1,1} ds, \nonumber\\
            & \leq  \int_0^t \| \partial_\theta g_{t-s}\|_{1,1} \| B_s(\nu^1_s - \nu^2_s)\|_{1,1} ds + \lambda \int^t_0 \|\nabla_x g_{t-s} \|_{1,1} \|v(\nu^1_s - \nu^2_s)\|_{1,1} ds,\nonumber\\
            &\hspace{2em}  + \|\alpha\|_\infty \int^t_0 \|g_{t-s}\|_{1,1} \|\nu^1_s - \nu^2_s\|_{1,1} ds, \nonumber\\
            &  \leq  \int_0^t f^\theta_1(t-s)\| B_s\|_{p,\infty} \|\nu^1_s - \nu^2_s\|_{\frac{p}{p-1},1} ds + \lambda \int^t_0 f^x_1(t-s)\|\nu^1_s - \nu^2_s\|_{1,1} ds,\nonumber\\
            &\hspace{2em} + \|\alpha\|_\infty \int^t_0 f_1^0(t-s) \|\nu^1_s - \nu^2_s\|_{1,1} ds, \nonumber\\
            & \leq F^\theta_{\frac{p}{p-1},1}(u) \|B\|_{p,p,\infty} \sup_{[0,u]}\|\nu^1_s-\nu^2_s\|_{\frac{p}{p-1},1}\nonumber\\
            & \hspace{4em}+ \big(\lambda F^x_{1,1}(u) + \|\alpha\|_\infty F^0_{1,1}(u)\big) \sup_{[0,u]}\|\nu^1_s-\nu^2_s\|_{1,1}.
    \end{align}}
    Here, we note that $f^\theta_1 \in L^{\frac{p}{p-1}}([0,T])$, requires that $\frac{p}{p-1} < 2$, and this is true since we already imposed that $4<p$. Combining estimates \eqref{est:LipFPLr} and \eqref{est:LipFPL1}, we obtain that for $u$ sufficiently small, $\Psi$ is a contraction. From Banach-Picard thereom, there exists a local solution. Using the continuation method we have a unique solution up to a critical time. Taking that solution, noting it $\rho$, applying analogous computations, we obtain for $r = q$ or $r = \frac{p}{p-1}$ that,
    \begin{align*}
        \|\rho_t\|_{r,1} & \leq \|\rho_0\|_{r,1} + \int_0^t \| \partial_\theta g_{t-s}\|_{\frac{p}{p-1},1} \| B_s \|_{p,\infty} \| \rho_s \|_{r,1} ds + \lambda \int^t_0 \|\nabla_x g_{t-s}\|_{1,1} \|\rho_s\|_{r,1} ds \nonumber \\
        & \ \ \ \ \ + \|\alpha\|_\infty \int^t_0 \|g_{t-s}\|_{1,1} \|\rho_s\|_{r,1} ds + \int^t_0 \|g_{t-s}\|_{1,1} \|\eta_s\|_{r,1} ds.
    \end{align*}
    From the fundamental solution estimates Proposition~\ref{prop:funHesti}, since we can bound,
    \begin{equation*}
        \max\left\{ \| \partial_\theta g_{t-s}\|_{\frac{p}{p-1},1}, \|\nabla_x g_{t-s}\|_{1,1}, \|g_{t-s}\|_{1,1}\right\}\leq C_p (1+(t-s)^{-\frac{1}{p}-\frac{1}{2}}) \ \ \forall t > s \geq 0.
    \end{equation*}

    We can apply the Gr\"onwall type inequality of Proposition~\ref{prop:SGrnwllIneq}, so that,
    \begin{equation}
        \label{est:gronwallFPLq}
        \|\rho_t\|_{r,1}  \leq \big(\|\rho_0\|_{r,1} + T\sup_{[0,T]}\|\eta_s\|_{r,1}\big)M_p\left(\|B\|_{p,p,\infty}+ (\lambda+\|\alpha\|_\infty) T^{\frac{1}{p}},t\right),
    \end{equation}
    where $M_p$ is the growth function given in Proposition~\ref{prop:SGrnwllIneq}. 
    
    And similarly for the $L^1$ norm, we obtain:
    \begin{align*}
        \|\rho_t\|_{1,1} & \leq \|\rho_0\|_{1,1} + \int_0^t \| \partial_\theta g_{t-s}\|_{1,1} \| B_s \|_{p,\infty} \| \rho_s \|_{\frac{p}{p-1},1} ds\\
        &\hspace{1cm}+ \lambda \int^t_0 \|\nabla_x g_{t-s}\|_{1,1} \|\rho_s\|_{1,1} ds\\
            & \hspace{1cm} + \|\alpha\|_\infty \int^t_0 \|g_{t-s}\|_{1,1} \|\rho_s\|_{1,1} ds + \int^t_0 \|g_{t-s}\|_{1,1} \|\eta_s\|_{1,1} ds.
    \end{align*}

    Using the bound,
    \begin{equation*}
        \max\left\{ \| \partial_\theta g_{t-s}\|_{1,1}, \|\nabla_x g_{t-s}\|_{1,1}, \|g_{t-s}\|_{1,1}\right\}\leq C_p (1+(t-s)^{-\frac{1}{2}}) \ \ \forall t > s \geq 0,
    \end{equation*}
    and Proposition~\eqref{prop:SGrnwllIneq}, we obtain for all $t\in [0,T]$,
    {\small
    \begin{equation}
        \label{est:gronwallFPL1}
        \|\rho_t\|_{1,1} \leq \Big(\|\rho_0\|_{1,1} + F^\theta_{1,1}(T)\|B\|_{p,p,\infty} \sup_{[0,t]}\|\rho_s\|_{\frac{p}{p-1},1} + T\sup_{[0,T]}\|\eta_s\|_{1,1}\Big) M_\infty(\lambda+\|\alpha\|_\infty,t).
    \end{equation}}
    Plugging the bound on the $L^{\frac{p}{p-1}}_x(L^1_\theta)$-norm from \eqref{est:gronwallFPLq} in \eqref{est:gronwallFPL1} we obtain a growth for the $L^1$ norm. These estimates prohibit finite time blow-up, thus the solution exists up to $T$.
    The proof of positivity is given in Lemma~\ref{lem:PositivityLinFokkerPlanck}.
    The fact that the mild solution is a distributional solution results from the following stability Lemma~\ref{lem:stab}, similarly as the positivity result(Lemma~\ref{lem:PositivityLinFokkerPlanck}), taking a sequence of approximated smooth solutions and controlling the convergence of the terms in $L^q_x(L^1_\theta)\bigcap L^1_{x,\theta}$ involved in the distributional formulation \eqref{eq:LinFokkPlanckDistributionalSol}.
\end{proof}

We now prove the stability result.
\begin{lemma}[Stability]
    \label{lem:stab}
    For $4<p \leq \infty$, and $\frac{p}{p-1} \leq q$, let $\rho^1, \rho^2$ two mild solutions in $C([0,T],L^q_x(L^1_\theta)\cap L^1_{x,\theta})$ of equation~\eqref{eq:generalFokkerPlanckLin}, associated respectively to initial datum $\rho^1_0, \rho^2_0 \in L^q_x(L^1_\theta)\cap L^1_{x,\theta}$, $\theta$-drifts $B^1, B^2 \in L^p_{t,x}(L^1_\theta)$, death-rate $\alpha_1,\alpha_2 \in L^\infty_x$ and source terms $\eta^1,\eta^2 \in C([0,T],L^q_x(L^1_\theta)\cap L^1_{x,\theta})$. Then, we have the following stability estimates,
    \begin{align*}
        \sup_{t \in [0,T]} \|\rho^1_s - \rho^2_s\|_{q,1} \leq & C\Big[\|\rho^1_0 - \rho^2_0\|_{q,1}  + T\sup_{[0,T]}\|\eta^1_s-\eta^2_s\|_{q,1} +\\
        &\hspace{.5cm} \|B^1-B^2\|_{p,p,\infty}F^\theta_{\frac{p}{p-1},\frac{p}{p-1}}(T)+ T\|\alpha^1-\alpha^2\|_\infty\Big],\\
         \sup_{t\in[0,T]}\|\rho^1_t-\rho^2_t\|_{1,1} \leq & C \Big[\|\rho^1_0 - \rho^2_0\|_{1,1}  + T\sup_{s\in [0,T]}\|\eta^1_s-\eta^2_s\|_{1,1} + \|B^1-B^2\|_{p,p,\infty}F^\theta_{\frac{p}{p-1},\frac{p}{p-1}}(T) +\\
        & \hspace{.5cm}  T\|\alpha^1-\alpha^2\|_\infty + F^\theta_{\frac{p}{p-1},\frac{p}{p-1}}(T) \sup_{[0,T]}\|\rho^1_s-\rho^2_s\|_{\frac{p}{p-1},1}\Big],
    \end{align*}
    
    for some positive constant $C$ depending on $T,p,\|\rho^i_0\|_{1,1},\|\rho^i_0\|_{q,1}$ ,$\|B^i\|_{p,p,\infty}$, $\|\eta^i\|_{E}$,
    $\|\alpha^i\|_\infty$ for  $i = 1,2$, and $\lambda$.
\end{lemma}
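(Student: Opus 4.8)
The plan is to subtract the two mild formulations \eqref{eq:generalFokkerPlanckLin} satisfied by $\rho^1$ and $\rho^2$ and to run, on the difference $w \defeq \rho^1 - \rho^2$, exactly the Young-convolution and heat-kernel estimates already used in the proof of Theorem~\ref{thm:FokkerPlanckMild}. The only genuinely new ingredient is the bookkeeping of the products that mix the two solutions: I would write, pointwise in $s$,
\begin{equation*}
    B^1_s\rho^1_s - B^2_s\rho^2_s = B^1_s w_s + (B^1_s - B^2_s)\rho^2_s, \qquad \alpha_1\rho^1_s - \alpha_2\rho^2_s = \alpha_1 w_s + (\alpha_1-\alpha_2)\rho^2_s,
\end{equation*}
so that $w$ solves an integral equation whose convolution terms split into \emph{feedback} terms (those carrying $w_s$ inside the convolution) and \emph{source} terms (those carrying a data difference $\rho^1_0-\rho^2_0$, $B^1-B^2$, $\alpha_1-\alpha_2$, $\eta^1-\eta^2$, multiplied by $\rho^2_s$ or acting alone). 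The factors $\sup_{[0,T]}\|\rho^2_s\|_{q,1}$ and $\sup_{[0,T]}\|\rho^2_s\|_{\frac{p}{p-1},1}$ arising in the source terms are controlled uniformly by the growth estimate \eqref{est:gronwallFPLq}, which is what produces the constant $C$ depending on $T,p,\|\rho^i_0\|,\|B^i\|_{p,p,\infty},\|\eta^i\|_E,\|\alpha^i\|_\infty$ and $\lambda$.

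For the $L^q_x(L^1_\theta)$ estimate I would take the $(q,1)$-norm of $w_t$ and bound each convolution exactly as in \eqref{est:LipFPLr}: the drift feedback by $f^\theta_{p/(p-1)}(t-s)\|B^1_s\|_{p,\infty}\|w_s\|_{q,1}$, the transport and death feedbacks by $\lambda f^x_1(t-s)\|w_s\|_{q,1}$ and $\|\alpha_1\|_\infty f^0_1(t-s)\|w_s\|_{q,1}$. Since each kernel is dominated by $C_p(1+(t-s)^{-1/p-1/2})$ for $p>4$ (the integrability being exactly the condition $(\tfrac1p+\tfrac12)\tfrac{p}{p-1}<1$), the feedback terms fit the hypotheses of Proposition~\ref{prop:SGrnwllIneq}. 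The source terms assemble into a non-decreasing $c_0(T)$: the initial-data term is controlled by $\|g_t\|_{1,1}\|\rho^1_0-\rho^2_0\|_{q,1}$, the drift-difference term by $F^\theta_{p/(p-1),p/(p-1)}(T)\|B^1-B^2\|_{p,p,\infty}\sup_{[0,T]}\|\rho^2_s\|_{q,1}$ (Hölder in time using $f^\theta_{p/(p-1)}\in L^{p/(p-1)}$ and $B^1-B^2\in L^p_{t,x}(L^\infty_\theta)$), and the $\eta$- and $\alpha$-difference terms by $T\sup\|\eta^1-\eta^2\|_{q,1}$ and $T\|\alpha_1-\alpha_2\|_\infty\sup_{[0,T]}\|\rho^2_s\|_{q,1}$. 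Applying Proposition~\ref{prop:SGrnwllIneq} then yields the first inequality.

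The $L^1_{x,\theta}$ estimate I would treat as in \eqref{est:gronwallFPL1}: taking the $(1,1)$-norm and using $\|\partial_\theta g_{t-s}\|_{1,1}=f^\theta_1(t-s)$ together with the Hölder bound $\|B^1_s w_s\|_{1,1}\le\|B^1_s\|_{p,\infty}\|w_s\|_{p/(p-1),1}$, the drift feedback \emph{cannot} be closed against $\|w_s\|_{1,1}$; it produces instead the $L^{p/(p-1)}_x(L^1_\theta)$ norm of $w$. I would therefore move this contribution to the right-hand side as the explicit term $F^\theta_{p/(p-1),p/(p-1)}(T)\|B^1\|_{p,p,\infty}\sup_{[0,T]}\|w_s\|_{p/(p-1),1}$ and close the Grönwall loop only on the genuinely $L^1$ feedbacks ($\lambda f^x_1$ and $\|\alpha_1\|_\infty f^0_1$, both bounded by $C(1+(t-s)^{-1/2})$), invoking $M_\infty$. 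The remaining source terms are handled as before, giving the second inequality.

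I expect the main obstacle to be precisely this cross-norm coupling in the $L^1$ bound: because the singular $\theta$-derivative of the heat kernel is only $L^{p/(p-1)}$-integrable in time, the drift term forces the $L^{p/(p-1)}_x(L^1_\theta)$ norm of $w$ rather than the $L^1$ one, so the two estimates cannot be decoupled. This is why the lemma keeps the $\sup_{[0,T]}\|w_s\|_{p/(p-1),1}$ term on the right of the $L^1$ inequality; in the applications one closes the system by feeding the first estimate into the second, via interpolation between $L^1_{x,\theta}$ and $L^q_x(L^1_\theta)$, which is legitimate since $\frac{p}{p-1}\le q$. A secondary point requiring care is verifying that all time-kernels land in the correct Lebesgue classes so that Hölder in time and Proposition~\ref{prop:SGrnwllIneq} apply, but this is guaranteed verbatim by the $p>4$ threshold already exploited in Theorem~\ref{thm:FokkerPlanckMild}.
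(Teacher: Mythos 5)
Your proposal is correct and follows essentially the same route as the paper's proof: the same mild-formulation subtraction, the same splitting of each convolution into feedback and source parts (up to the harmless symmetric choice of writing $B^1\rho^1-B^2\rho^2=B^1(\rho^1-\rho^2)+(B^1-B^2)\rho^2$ instead of $(B^1-B^2)\rho^1+B^2(\rho^1-\rho^2)$), the same use of the growth estimate of Theorem~\ref{thm:FokkerPlanckMild} to absorb $\sup_s\|\rho^i_s\|$ into the constant, and the same application of Proposition~\ref{prop:SGrnwllIneq}, including the correct observation that the $\theta$-drift term in the $L^1$ bound can only be closed against the $L^{p/(p-1)}_x(L^1_\theta)$ norm and must therefore remain on the right-hand side. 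No changes are needed.
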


\begin{proof}
    Let $r = q$ or $r=\frac{p}{p-1}$. Using Young convolution inequality,  H\"older inequality and Gr\"onwall's Lemma, we obtain,
    \begin{align*}
        \|\rho^1_t - \rho^2_t\|_{r,1} \leq  & \|\rho^1_0 - \rho^2_0\|_{r,1}  + \lambda\int^t_0 \|\nabla_x g_{t-s}\|_{1,1}\|\rho^1_s - \rho^2_s\|_{r,1}ds \\
        &\hspace{.5cm}+ \int^t_0 \|\partial_\theta g_{t-s}\|_{\frac{p}{p-1},1}\|B^1_s\rho^1_s - B^2_s\rho^2_s\|_{\frac{pr}{p+r},1} ds\\
        & \hspace{.5cm} + \int_0^t \|g_{t-s}\|_{1,1}(\|\alpha^1\rho^1_s - \alpha^2 \rho^2_s\|_{r,1} + \|\eta^1_s-\eta^2_s\|_{r,1})ds,\\
        \leq & \|\rho^1_0 - \rho^2_0\|_{r,1}  +  \lambda\int^t_0 f^x_{1}(t-s)\|\rho^1_s - \rho^2_s\|_{r,1}ds\\
        & \hspace{.5cm} + \int^t_0 f^\theta_{\frac{p}{p-1}}(t-s)\Big(\|B^1_s - B^2_s\|_{p,\infty}\|\rho^1_s\|_{r,1} + \|B^2_s\|_{p,\infty}\|\rho^1_s - \rho^2_s\|_{r,1}\Big) ds\\
        & \hspace{.5cm} +\int_0^t (\|\alpha^1 - \alpha^2 \|_\infty \|\rho^1_s\|_{r,1} + \|\alpha^2\|_\infty \|\rho^1_s -\rho^2_s\|_{r,1} + \|\eta^1_s-\eta^2_s\|_{r,1})ds.
    \end{align*}
    Since we have the bounds,
    \begin{equation*}
        \max \left\{ f^\theta_{\frac{p}{p-1}}(t-s), f^x_1(t-s)\right\} \leq C_p (1+(t-s)^{-\frac{1}{p}-\frac{1}{2}}),
    \end{equation*}
    from Proposition~\ref{prop:SGrnwllIneq} we obtain,
    {\small
    \begin{align*}
        \|\rho^1_t - \rho^2_t\|_{r,1}  \leq & \Big[\|\rho^1_0 - \rho^2_0\|_{r,1}  + T\sup_{s\in [0,T]}\|\eta^1_s-\eta^2_s\|_{r,1} \\
        & \hspace{1.5cm}  + \big(\|B^1-B^2\|_{p,p,\infty}F^\theta_{\frac{p}{p-1},\frac{p}{p-1}}(T)+ T\|\alpha^1-\alpha^2\|_\infty\big)\sup_{s\in [0,T]}\|\rho^1_s\|_{r,1}\Big]\\
        & \hspace{1.5cm} \times M_p\left(\|B\|_{p,p,\infty}+(\lambda + \|\alpha^2\|_\infty)T^\frac{1}{p}, t\right).
    \end{align*}}
    We obtain the desired estimate from the bound on $\sup_{[0,T]}\|\rho^i_s\|_{q,1}$ of Theorem~\ref{thm:FokkerPlanckMild}.

    Similarly, for the $L^1$-norm, 

    \begin{align*}
        \|\rho^1_t - \rho^2_t\|_{1,1} \leq  & \|\rho^1_0 - \rho^2_0\|_{1,1}  + \lambda\int^t_0 \|\nabla_x g_{t-s}\|_{1,1}\|\rho^1_s - \rho^2_s\|_{1,1}ds \\
        &\hspace{.5cm}+ \int^t_0 \|\partial_\theta g_{t-s}\|_{\frac{p}{p-1},1}\|B^1_s\rho^1_s - B^2_s\rho^2_s\|_{1,1} ds\\
        & \hspace{.5cm} + \int_0^t \|g_{t-s}\|_{1,1}(\|\alpha^1\rho^1_s - \alpha^2 \rho^2_s\|_{1,1} + \|\eta^1_s-\eta^2_s\|_{1,1})ds,\\
        \leq & \|\rho^1_0 - \rho^2_0\|_{1,1}  +  \lambda\int^t_0 f^x_{1}(t-s)\|\rho^1_s - \rho^2_s\|_{1,1}ds\\
        & \hspace{.5cm} + \int^t_0 f^\theta_{\frac{p}{p-1}}(t-s)\Big(\|B^1_s - B^2_s\|_{p,\infty}\|\rho^1_s\|_{\frac{p}{p-1},1} + \|B^2_s\|_{p,\infty}\|\rho^1_s - \rho^2_s\|_{\frac{p}{p-1},1}\Big) ds\\
        & \hspace{.5cm} +\int_0^t (\|\alpha^1 - \alpha^2 \|_\infty \|\rho^1_s\|_{1,1} + \|\alpha^2\|_\infty \|\rho^1_s -\rho^2_s\|_{1,1} + \|\eta^1_s-\eta^2_s\|_{1,1})ds.
    \end{align*}
    Using again Proposition~\ref{prop:SGrnwllIneq},
    \begin{align*}
        \|\rho^1_t-\rho^2_t\|_{1,1} \leq & C \Big[\|\rho^1_0 - \rho^2_0\|_{1,1}  + T\sup_{s\in [0,T]}\|\eta^1_s-\eta^2_s\|_{1,1}\\
        & \hspace{1cm} + \|B^1-B^2\|_{p,p,\infty}F^\theta_{\frac{p}{p-1},\frac{p}{p-1}}(T) \sup_{s\in [0,T]}\|\rho^1_s\|_{\frac{p}{p-1},1}\\
        & \hspace{1cm} + T\|\alpha^1-\alpha^2\|_\infty \sup_{[0,T]}\|\rho^1\|_{1,1} + F^\theta_{ \frac{p}{p-1},\frac{p}{p-1}}(T)\|B^2\|_{p,p,\infty} \sup_{[0,T]}\|\rho^1_s-\rho^2_s\|_{\frac{p}{p-1},1}\Big],
    \end{align*}
    for some constant $C$, depending on $p$, $T$, $\|B^2\|_{p,p,\infty}$, $\|\alpha^2\|_{\infty}$ and $\lambda$. We obtain the desired estimate from the bound on $\sup_{[0,T]}\|\rho^1_s\|_{1,1}$ of Theorem~\ref{thm:FokkerPlanckMild}.
\end{proof}

\begin{lemma}[Positivity]
    \label{lem:PositivityLinFokkerPlanck}
    For $4<p \leq \infty$, and $\frac{p}{p-1} \leq q$, let $\rho$ be a mild solution in $C([0,T],L^q_x(L^1_\theta)\cap L^1_{x,\theta})$ of equation~\eqref{eq:generalFokkerPlanckLin}, associated with the initial non-negative datum $\rho_0\in (L^q_x(L^1_\theta)\cap L^1_{x,\theta})_+$, for a scalar field $B\in L^p_{t,x}(L^1_\theta)$, a death-rate $\alpha \in L^\infty_x$ and a positive birth term $\eta\in C([0,T],(L^q_x(L^1_\theta)\cap L^1_{x,\theta})_+)$. 
    
    Then, 
    \begin{equation*}
        \rho \in C([0,T],(L^q_x(L^1_\theta)\cap L^1_{x,\theta})_+),
    \end{equation*}
    that is $\rho$ stays non-negative for all times.
\end{lemma}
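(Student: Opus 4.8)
The plan is to prove positivity by approximation: reduce to the case of smooth, bounded data, where the mild formula \eqref{eq:generalFokkerPlanckLin} is genuinely the Duhamel formula of a classical parabolic Cauchy problem and positivity is transparent, and then transfer the conclusion to the general case by the stability estimate of Lemma~\ref{lem:stab}. The point is that one cannot argue directly on \eqref{eq:generalFokkerPlanckLin}, since the kernels $\partial_\theta g_{t-s}$ and $\nabla_x g_{t-s}$ change sign, so the convolution structure alone does not preserve the positive cone.

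First I would regularize the data. Choose smooth approximations $\rho_0^n \geq 0$, $\eta^n \geq 0$, $B^n$, $\alpha^n$, obtained by mollification (which preserves non-negativity), converging to $\rho_0, \eta, B, \alpha$ in the norms appearing on the right-hand side of the stability estimate of Lemma~\ref{lem:stab}, namely $L^q_x(L^1_\theta)\cap L^1_{x,\theta}$ for $\rho_0^n$ and $\eta^n$, the relevant $L^p_{t,x}$-norm for $B^n$, and $L^\infty_x$ for $\alpha^n$. Let $\rho^n$ denote the corresponding mild solution furnished by Theorem~\ref{thm:FokkerPlanckMild}, which belongs to $C_t(L^q_x(L^1_\theta)\cap L^1_{x,\theta})$.

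Second, for smooth coefficients the mild identity \eqref{eq:generalFokkerPlanckLin} is precisely Duhamel's formula for the classical problem $\partial_t \rho^n = \mathcal{L}_{B^n}\rho^n + \alpha^n \rho^n + \eta^n$, with $\mathcal{L}_{B^n}$ as in \eqref{eq:FormDiffOpB} (recall $\sigma_x=\sigma_\theta=1$ after normalization and $\div_x v = 0$, so the transport term is in divergence form). This is the forward (Fokker--Planck) equation of the diffusion $(X,\Theta)$ whose generator has drift $(\lambda v(\theta), B^n)$ and diffusion matrix $2\,\mathrm{Id}$. I would then establish $\rho^n \geq 0$ via the probabilistic representation: writing $P^n_t$ for the (positivity-preserving) semigroup of this diffusion and $P^{n,\alpha}_t f(z) = \dE_z\!\big[f(Z_t)\exp(\int_0^t \alpha^n(Z_s)\,ds)\big]$ for its Feynman--Kac weighting, the solution is $\rho^n_t = (P^{n,\alpha}_t)^{*}\rho_0^n + \int_0^t (P^{n,\alpha}_{t-s})^{*}\eta^n_s\,ds$; every factor is non-negative because $\rho_0^n, \eta^n \geq 0$, the multiplicative weight is positive, and the adjoint of a positivity-preserving semigroup maps non-negative measures to non-negative measures. (Equivalently, one may invoke the parabolic maximum principle for $w=e^{-Kt}\rho^n$ with $K$ dominating the zeroth-order coefficient $\alpha^n-\partial_\theta B^n$, excluding a negative interior minimum.) By uniqueness in Theorem~\ref{thm:FokkerPlanckMild} this classical solution coincides with the mild solution $\rho^n$.

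Finally I would pass to the limit. Lemma~\ref{lem:stab} gives $\rho^n \to \rho$ in $C_t(L^q_x(L^1_\theta)\cap L^1_{x,\theta})$ as the data converge; extracting a subsequence converging a.e.\ and using $\rho^n \geq 0$, the limit satisfies $\rho \geq 0$ a.e., so $\rho \in C_t\big((L^q_x(L^1_\theta)\cap L^1_{x,\theta})_+\big)$. The main obstacle is the smooth-case positivity on the unbounded domain $\dR^2$: the probabilistic route is the cleanest since it avoids a maximum principle ``at infinity,'' but it requires verifying that the transition-density representation is itself a mild solution (so that uniqueness identifies it with $\rho^n$) and that the mollified drift $B^n$ has bounded derivatives, guaranteeing a genuine smooth transition density for the diffusion.
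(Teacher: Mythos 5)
Your proposal is correct and shares the paper's outer architecture exactly — regularize the data, prove positivity for the smooth approximations, then transfer to the general mild solution via the stability estimate of Lemma~\ref{lem:stab} — but the core step is carried out by a genuinely different argument. Where you invoke the probabilistic representation $\rho^n_t = (P^{n,\alpha}_t)^{*}\rho_0^n + \int_0^t (P^{n,\alpha}_{t-s})^{*}\eta^n_s\,ds$ (or, equivalently, the parabolic minimum principle for $e^{-Kt}\rho^n$ with $K$ dominating $\alpha^n-\partial_\theta B^n$), the paper uses Stampacchia's truncation: it introduces a $C^2$ convex function $H$ vanishing on $\dR_+$, differentiates $\int H(\rho^\varepsilon)$ in time, absorbs the first-order terms into the dissipation $-\int H''(\rho^\varepsilon)|\nabla\rho^\varepsilon|^2$ by Young's inequality (using $\|B^\varepsilon\|_\infty<\infty$), and concludes $\int H(\rho^\varepsilon_t)=0$ by Gr\"onwall. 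The trade-offs are as you anticipate: the energy method is purely PDE-based, works directly on the divergence-form equation in $W^{1,2}_2$, and sidesteps both the construction of the diffusion and any Phragm\'en--Lindel\"of issue on the unbounded domain $\dR^2\times\mathbb{T}_{2\pi}$; your route is conceptually more transparent (the forward equation of a diffusion manifestly propagates non-negativity) but carries the extra burden you flag yourself, namely identifying the transition-density/Feynman--Kac representation with the mild solution through the uniqueness of Theorem~\ref{thm:FokkerPlanckMild}. Both are legitimate; note only that the identification step you defer is genuinely needed (the paper faces the analogous identification of its classical Krylov solution with the mild one and also resolves it through stability and uniqueness), so it should not be left entirely implicit in a final write-up.
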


\begin{proof}
    Taking a sequence $B^\varepsilon \in C^\infty_b \cap L^p_{t,x}(L^\infty_\theta)$ converging in $L^p_{t,x}(L^\infty_\theta)$ to $B$, similarly $\rho^\varepsilon_0 \in C^\infty_b\cap W^{2-2/q}_q \cap W^{1}_2$ converging to $\rho_0$ in $L^q_x(L^1_\theta)$, $\eta^\varepsilon$ positive in $C^\infty_b \cap L^q_{t,x}(L^1_\theta)_+$ converging in $L^q_{t,x}(L^1_\theta)_+$ to $\eta$, and $\alpha^\varepsilon  \in C^\infty_b \cap L^\infty_x$ converging in $L^\infty_x$ to $\alpha$. There exists a unique smooth solution $\rho^\varepsilon \in C^{1,2}_b \cap W^{1,2}_2 \cap W^{1,2}_q$ (\textit{e.g} \cite[Chp 4. Thm 8. p.109]{krylov2008lectures})
    of,
    \begin{equation*}
        \begin{cases}
            \partial_t \rho^\varepsilon = \Delta \rho^\epsilon -\partial_\theta(B^\varepsilon)\rho^\varepsilon - B^\varepsilon \partial_\theta \rho^\varepsilon - \lambda v\cdot\nabla_x \rho^\varepsilon +\alpha \rho^\varepsilon + \eta^\varepsilon,\\
            \rho^\varepsilon_{t=0} = \rho^\varepsilon_0.
        \end{cases}
    \end{equation*}
    
    We use Stampacchia’s truncation method, by defining $H \in C^2(\dR)$ as,
    \begin{equation*}
        H(t) =  \begin{cases}
                    0 & \text{ if } 0 \leq t,\\
                    -\frac{t^3}{6} & \text{ if } -1 \leq t < 0,\\
                    \frac{t^2}{2} + \frac{t}{2} + \frac{1}{6} & \text{ if } t < -1.
                \end{cases}
    \end{equation*}

    $H$ and its derivatives possess the following properties:

    \begin{equation*}
        H(t) \leq t^2, \ \ -|t| \leq H'(t) \leq 0, \ \ 0 \leq H'(t)t \leq 6H(t), \ \ 0\leq H''(t) \leq 1, \ \  H''(t)t^2 \leq 6H(t), \ \ \forall t \in \dR.
    \end{equation*}

    We obtain:
    {\small
    \begin{align*}
        \frac{d}{dt}\int H(\rho^\varepsilon)dxd\theta & = \int H'(\rho^\varepsilon)(\Delta \rho^\varepsilon - \partial_\theta(B^\varepsilon \rho^\epsilon) - \lambda v\cdot \nabla_x \rho^\varepsilon + \alpha \rho^\varepsilon +\eta^\epsilon),\\
                & = -\int H''(\rho^\varepsilon) |\nabla \rho^\varepsilon|^2 + \int H'(\rho^\varepsilon)(\alpha \rho^\varepsilon +\eta^\varepsilon)+ \int H''(\rho^\varepsilon)\rho^\epsilon(B^\varepsilon \cdot \partial_\theta \rho^\varepsilon +\lambda v\cdot \nabla_x \rho^\varepsilon),
    \end{align*}}
    since $\eta^\varepsilon$ is positive, and $H'$ is non-positive, $ \int H'(\rho^\varepsilon)\eta^\varepsilon \leq 0$, this leads to,
    \begin{align*}
        \frac{d}{dt}\int H(\rho^\varepsilon)dxd\theta & \leq -\int H''(\rho^\varepsilon) |\nabla \rho^\varepsilon|^2 + \|\sqrt{H''(\rho^\varepsilon)} (B^\varepsilon  \partial_\theta \rho^\varepsilon +\lambda v\cdot \nabla_x \rho^\varepsilon)\|_2 \|\sqrt{H''(\rho^\epsilon)}\rho^\epsilon\|_2 \\
        & \hspace{1cm} + \|\alpha\|_\infty 6\int |H'(\rho^\varepsilon)\rho^\varepsilon|,\\
                & \leq -\int H''(\rho^\varepsilon) |\nabla \rho^\varepsilon|^2 + \frac{\mu}{2}\int H''(\rho^\varepsilon)(B^\varepsilon \partial_\theta \rho^\varepsilon +\lambda v\cdot \nabla_x \rho^\varepsilon)^2 \\
                &\hspace{1cm} + \frac{1}{2\mu} \int H''(\rho^\varepsilon)(\rho^\varepsilon)^2  + 6\|\alpha\|_\infty \int H(\rho^\varepsilon),\\
                & \leq -\int H''(\rho^\varepsilon) |\nabla \rho^\varepsilon|^2 +  (\lambda \vee \|B^\varepsilon\|_\infty)\frac{\mu}{2} \int H''(\rho^\varepsilon)|\nabla \rho^\varepsilon|^2 \\
                & \hspace{1cm}+ \Big(\frac{3}{\mu} + 6\|\alpha\|_\infty\Big) \int H(\rho^\varepsilon), \\
                & \leq \Big((\lambda \vee \|B^\varepsilon\|_\infty)\frac{\mu}{2} - 1\Big)\int H''(\rho^\varepsilon) |\nabla \rho^\varepsilon|^2 + \big(\frac{3}{\mu} + 6\|\alpha\|_\infty\big)\int H(\rho^\varepsilon).
    \end{align*}

    For $\mu$ sufficiently small the first term is negative, we obtain,
    \begin{equation*}
        \frac{d}{dt}\int H(\rho^\epsilon_t)dxd\theta \leq \big(\frac{3}{\mu} + 6\|\alpha\|_\infty\big)\int H(\rho^\epsilon_t).
    \end{equation*}

    Since $\rho^\varepsilon_0$ is non-negative, $\int H(\rho^\varepsilon_0) = 0$, and from Gr\"ownwall Lemma the property is preserved.
    We deduce that $\rho$ is positive, from the stability Lemma~\ref{lem:stab} that gives the convergences of the approximated solution to $\rho$.
    \end{proof}

     Finally, we state and prove the following lemma which is crucial for the analysis of the system.

    \begin{lemma}[Averaging Lemma]
        \label{lem:average}
        For $4<p \leq \infty$, and $\frac{p}{p-1} \leq q$, let $\rho$ be a solution in $C([0,T], L^q_x(L^1_\theta))$ associated with the initial data $\rho_0\in L^q_x(L^1_\theta)_+$ and with drift $B\in L^p_{t,x}(L^1_\theta)$. Then, we have the following estimate, uniformly in $B$,
        \begin{equation}
            \sup_{t\in [0,T]} \left\|\int \rho_t d\theta\right\|_p  \leq \left\|\int \rho_0 d\theta\right\|_pM_{\infty}(\lambda,T),
        \end{equation}
        where $M_{\infty}:(\dR_+)^2\mapto \dR_+$ is a positive non-decreasing continuous function independent of $B$.
    \end{lemma}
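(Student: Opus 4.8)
The plan is to show that the $\theta$-average $m_t \defeq \int \rho_t\, d\theta$ solves, on its own, a \emph{closed} linear heat equation on $\mathcal D$ in which the singular drift $B$ has disappeared, and then to close a Grönwall estimate on $\|m_t\|_p$ using only the boundedness of the spatial speed $|v|\equiv1$. The structural reason this works is that the $\theta$-transport term is in divergence form in $\theta$. Concretely, I would integrate the mild identity for $\rho$ (in the present case $\alpha\equiv0$, $\eta\equiv0$) over $\theta\in\mathbb T_{2\pi}$. Writing $G_t$ for the heat kernel on $\mathcal D$, so that $g_t(x,\theta)=G_t(x)\,k_t(\theta)$ with $k_t$ the heat kernel on $\mathbb T_{2\pi}$ and $\int k_t = 1$, the term $\int_{\mathbb T_{2\pi}}\big(\partial_\theta g_{t-s}*(B_s\rho_s)\big)\,d\theta$ vanishes identically, since integrating a $\theta$-derivative over the full period is zero; the mass-preserving kernel collapses the remaining terms to
\begin{equation*}
    m_t = G_t * m_0 - \lambda \int_0^t \nabla_x G_{t-s} * J_s \, ds,
    \qquad J_s(x)\defeq \int v(\theta)\,\rho_s(x,\theta)\,d\theta \in \dR^2 .
\end{equation*}
(The same conclusion follows from testing the distributional formulation \eqref{eq:LinFokkPlanckDistributionalSol} against $\psi=\psi(t,x)$ independent of $\theta$, which kills both $B\partial_\theta\varphi$ and $\Delta_\theta\varphi$, exhibiting $m$ as a weak solution of $\partial_t m = \Delta_x m - \lambda\,\nabla_x\cdot J$.)

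The second key point is the pointwise flux bound, which is exactly where the dependence on $B$ is eliminated. Since $\rho_s\geq0$ (Lemma~\ref{lem:PositivityLinFokkerPlanck}) and $|v(\theta)|=1$, we have $|J_s(x)|\leq \int \rho_s(x,\theta)\,d\theta = m_s(x)$, hence $\|J_s\|_p\leq \|m_s\|_p\leq\|\rho_s\|_{p,1}$. In particular $J_s\in L^p_x$ whenever $m_s\in L^p_x$, so the Duhamel term above is well defined in $L^p_x$ for $t>s$.

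It then remains to run a singular Grönwall argument. Taking $L^p_x$ norms, Young's convolution inequality and the flux bound give
\begin{equation*}
    \|m_t\|_p \leq \|m_0\|_p + \lambda \int_0^t \|\nabla_x G_{t-s}\|_{L^1_x}\, \|m_s\|_p \, ds,
\end{equation*}
using $\|G_t\|_{L^1_x}=1$ for the first term. The heat-kernel estimate of Proposition~\ref{prop:funHesti} (the case $p=1$ of $f^x$) bounds $\|\nabla_x G_{t-s}\|_{L^1_x}\leq C\big(1+(t-s)^{-1/2}\big)$, a singularity of order $1/2<1$ which is integrable. The resulting inequality is precisely of the form treated in Proposition~\ref{prop:SGrnwllIneq} with $p=\infty$ (so $1/p=0$, giving kernel $(t-s)^{-1/2}+1$), non-decreasing $c_0\equiv\|m_0\|_p$ and constant $c_1\equiv \lambda C\in L^\infty$. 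This yields $\sup_{t\in[0,T]}\|m_t\|_p\leq \|m_0\|_p\, M_\infty(\lambda,T)$, with $M_\infty$ depending only on $T$ and $\lambda$ (the geometric constant $C$ being absorbed into $M_\infty$) and manifestly independent of $B$, as claimed.

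The main obstacle is the rigorous reduction rather than the final estimate: one must justify that $m$ genuinely inherits the mild/Duhamel representation with respect to the \emph{spatial} semigroup, which requires checking the Fubini interchanges when integrating the $\rho$-identity in $\theta$ and verifying the integrability of $J_s$ so that each convolution is legitimate. Once this closed equation for $m$ is secured, the cancellation of the $\partial_\theta$-drift and the bound $|J|\le m$ render the remaining steps routine.
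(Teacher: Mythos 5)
Your proposal is correct and follows essentially the same route as the paper's proof: integrate the mild (Duhamel) identity over $\theta$ so that the $\partial_\theta g$ term vanishes, use positivity of $\rho$ together with $|v|\equiv 1$ to bound the flux by the average itself, and close with the heat-kernel bound $f^x_1(t-s)\leq C(1+(t-s)^{-1/2})$ and the Grönwall-type inequality of Proposition~\ref{prop:SGrnwllIneq}. The only cosmetic difference is that you make the kernel factorization and the Fubini interchange explicit, which the paper leaves implicit.
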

    
    \begin{proof}
        Integrating $\rho_t$ with respect to $\theta$, one gets,
        {\small \begin{align*}
            \int \rho_t d\theta & = \left(\int \rho_0 d\theta\right) *_x \left(\int g_t d\theta\right) - \int^t_0 \left(\int \partial_\theta g_{t-s}d\theta\right) *_x \left(\int B_s\rho_s d\theta\right) ds\\
            &\hspace{3em} - \int \left(\int \nabla_x g_{t-s} d\theta\right) *_x \left(\int \lambda v\rho_s d\theta \right) ds,\\
            & = \left(\int \rho_0 d\theta\right) *_x \left(\int g_t d\theta\right) - \int_0^t \left(\int \nabla_x g_{t-s} d\theta\right) *_x \left(\int \lambda v\rho_s d\theta\right) ds,
        \end{align*}}
    
        where $*_x$ indicates the convolution with respect to $(x_1,x_2)$ only, and the second inequality holds since $(\int \partial_\theta g_{t-s}d\theta) \equiv 0$. Since Theorem~\ref{thm:FokkerPlanckMild} implies that $\rho$ stays positive, we also have that,
        \begin{equation*}
            \left\|\int \lambda v\rho_s d\theta\right\|_p \leq \lambda \left\| \int |\rho_s|d\theta \right\|_p =  \lambda \left\| \int \rho_sd\theta \right\|_p.
        \end{equation*}
    
        We thus obtain,
        \begin{align*}
            \left\|\int \rho_t d\theta\right\|_p & \leq \left\|\int \rho_0 d\theta \right\|_p + \lambda \int_0^t \left\|\int \nabla g_{t-s} d\theta \right\|_{1} \left\|\int \rho_s d\theta\right\|_{p}ds,
            \\
            & \leq \left\|\int \rho_0 d\theta \right\|_p + \lambda \int_0^t f^x_{1}(t-s)\left\|\int \rho_s d\theta\right\|_{p}ds,
        \end{align*}
        using the estimate on $f^x_1$ and Proposition~\ref{prop:SGrnwllIneq}, we conclude that,
        \begin{equation*}
            \left\|\int \rho_t d\theta\right\|_p  \leq \left\|\int \rho_0 d\theta\right\|_pM_{\infty}(\lambda,t) \ \ \forall t\in [0,T].
        \end{equation*}
    \end{proof}
    
\section{Proof of the main results}
\label{sec:ProofMR}
In this section, we prove our main results announced in Section~\ref{sec:MainRes}. In Subsection~\ref{sec:globalExistence}, we present the proof of Theorem~\ref{thm:existuniqPara}. Then further regularity on the solution (see Theorem~\ref{thm:FurtherRegularity}) is obtained in Subsection~\ref{sec:FurtherRegularity}. Finally, the two-state model is tackled in Subsection~\ref{sec:Twostates}.

\subsection{Global existence}
\label{sec:globalExistence}
We now prove Theorem~\ref{thm:existuniqPara}.

\begin{proof}[Proof of Theorem~\ref{thm:existuniqPara}]
    Let $T>0$ a time to be fixed later. Introduce the Banach space 
    \begin{equation*}
        \mathcal{Y} = (W^{1,2}_p \times C_t(L^p_x(L^1_\theta))),
    \end{equation*}
    equipped with the norm
    \begin{equation*}
        \|\cdot \|_{W^{1,2}_p} + \|\cdot\|_{L^\infty_t(L^p_x(L^1_\theta))}.
    \end{equation*}

    Define the mapping $\Phi : \mathcal{Y} \mapto \mathcal{Y}$ by:
    for $(l,\nu) \in \mathcal{Y}$, $\Phi((l,\nu)) = (c,\rho)$, where $c$ is the unique solution in $W^{1,2}_p$ of
    \begin{equation*}
        \begin{cases}
            \partial_t c =\sigma \Delta c - \gamma c + \int \nu_s d\theta \text{ on } (0,T)\times \Omega,\\
            c_{t=0} = c_0,
        \end{cases}
    \end{equation*}

    and $\rho$ is the unique solution of the Fokker-Planck equation \eqref{eq:generalFokkerPlanckLin}, for $\alpha \equiv \eta \equiv 0$, and drift:

    \begin{equation*}
        B(s,x,\theta) = v^\perp(\theta)\cdot \nabla l(s,x) + \tau v^\perp(\theta)\cdot \nabla^2 l(s,x) v(\theta).
    \end{equation*}

    We note that we have the estimate,
    \begin{equation*}
        \sup_{\theta \in [0,2\pi]}|B(s,x,\theta)|\leq C_\tau (|\nabla l(s,x)| + |\nabla^2 l(s,x)|),
    \end{equation*}
    and thus,
    \begin{equation*}
        \|B\|_{p,p,\infty} \leq C_\tau \|l\|_{W^{1,2}_p}.
    \end{equation*}

    First, let $(c^1,\rho^1)$ and $(c^2,\rho^2)$ be the image of respectively $(l^1,\nu^1)$ and $(l^2,\nu^2)$, lying in $\mathcal{Y}$. From the maximal regularity result in Sobolev spaces \cite[Chp 4. Thm 8. p.109]{krylov2008lectures}, there exists $N>0$ only depending on $\gamma$, $\sigma$ and $p$ such that,
    \begin{align}
        \label{est:lipC}
        \|c^1 - c^2\|_{W^{1,2}_p} & \leq N \left\|\int (\nu^1 - \nu^2) d\theta\right\|_{p,p},\nonumber\\
        &\leq N T^{\frac{1}{p}} \sup_{t \in [0,T]}\left\|\nu^1_t - \nu^2_t \right\|_{p,1}
    \end{align}

    Note $B^1$ (resp. $B^2$) the drift associated with $l^1$ (resp. $l^2$). Since $B$ is linear w.r.t $l$, we have that,

    \begin{equation*}
        \|B^1 - B^2\|_{p,p,\infty} \leq C_\lambda \|l^1 - l^2\|_{W^{1,2}_p}.
    \end{equation*}

    Using the stability Lemma~\ref{lem:stab}, we obtain that, $\forall t \in [0,T]$,

    \begin{equation}
        \label{est:loclipRho}
        \|\rho^1_t-\rho^2_t\|_{p,1} \leq  C \|l^1 - l^2\|_{W^{1,2}_p}F^\theta_{\frac{p}{p-1},\frac{p}{p-1}}(T),
    \end{equation}

    for a $C$ depending on $p$, $\lambda$, $T$,$\|\rho_0\|_{p,1}$, $\|B^1\|_{p,p,\infty}$ and $\|B^2\|_{p,p,\infty}$. This estimate gives the local Lipschitzness of $\Phi$ and $F^\theta_{\frac{p}{p-1},\frac{p}{p-1}}$ as in Proposition~\ref{prop:funHesti}. We now prove that any iterated sequence is Cauchy, to this purpose we show that the iterated sequence of $B^n$ are uniformly bounded, this together with estimate \eqref{est:loclipRho} gives that the sequence is Cauchy for $T$ sufficiently small.
    For some $(c^0,\rho^0) \in \mathcal{Y}$, we define the sequence $((c^n,\rho^n))_{n}$ in $\mathcal{Y}$ as,
    \begin{equation*}
            (c^{n+1},\rho^{n+1}) = \Phi(c^n,\rho^n) \ \ \ \forall n \geq 0.
    \end{equation*}
    We can uniformly bound the sequence of associated drifts $B^n$.
    \begin{equation*}
        B^{n+1} = B(\theta, \nabla c^n, \nabla^2 c^n) \ \ \ \forall n \geq 0.
    \end{equation*}
    Indeed let $n \geq 2 $,
    \begin{equation*}
        \|B^{n+1}\|_{p,p,\infty} \leq C_\lambda\|c^{n}\|_{W^{1,2}_p} \leq N\left(\|c_0\|_{W^{2-2/p}_p} + \left\|\int \rho^{n-1}d\theta\right\|_{p,p}\right),
    \end{equation*}
    where $N$ is the same constant as in equation~\eqref{est:lipC}. Since for $n\geq 2$, $\rho^{n-1}$ is a solution to the Fokker-Planck equation associated with $B^{n-1}$, from the averaging Lemma~\ref{lem:average}, we obtain that,
    \begin{equation}
        \label{est:boundedBn}
        \|B^{n+1}\|_{p,p,\infty} \leq C_\lambda N\left(\|c_0\|_{W^{2-2/p}_p} + \left\|\int \rho_0 d\theta\right\|_{p}T^{\frac{1}{p}}M_\infty(\lambda ,T)\right) \ \ \ \forall n \geq 2.
    \end{equation}

    We conclude, from estimates \eqref{est:lipC}, \eqref{est:loclipRho} and \eqref{est:boundedBn}, that there exists $T$ sufficiently small so that the sequence is Cauchy.
    From the continuity of $\Phi$, the limit of the sequence is a solution to the non-linear system. Any solution will satisfy,
    \begin{equation}
        \label{est:boundB}
        \|B\|_{p,p,\infty}\leq C_\lambda N\left(\|c_0\|_{W^{2-2/p}_p} + \left\|\int \rho_0 d\theta\right\|_{p}T^{\frac{1}{p}}M_\infty(\lambda ,T)\right),
    \end{equation}
    this together with \eqref{est:loclipRho}, implies the uniqueness of the solution. By the continuation method, a unique solution exists up to a critical time, that we denote $(c,\rho)$. We already have that,
    \begin{equation}
        \label{est:croissC}
        \|c\|_{W^{1,2}_p} \leq N\left(\|c_0\|_{W^{2-2/p}_p} + \left\|\int \rho_0 d\theta\right\|_{p}T^{\frac{1}{p}}M_\infty(\lambda,T)\right).
    \end{equation}
    Thus, the growth estimate of $\|\rho_t\|_{p,1}$ from Theorem~\ref{thm:FokkerPlanckMild} together with \eqref{est:boundB}, and finally \eqref{est:croissC} prohibit finite time blow-up, thus the solution exists for all times.
\end{proof}

\subsection{Further regularity of the solution}
 \label{sec:FurtherRegularity}
 In this section, we prove that the regularity of the initial condition  propagates over time. This relies on the regularizing effects of the averaging of the density with respect to the azimuthal variable.

\begin{proof}[Proof of Theorem~\ref{thm:FurtherRegularity}]
    Take a sequence of mollified drift $B^\varepsilon$ in space and time in $C^{\infty}_b\cap L^p_{t,x}(L^\infty_\theta)$ converging to $B$ in $L^p_{t,x}(L^\infty_\theta)$, and consider the solution $\rho^\varepsilon$ of the approximated Fokker-Planck equation.
    \begin{equation}
        \label{eq:FPregEstimateReg}
        \begin{cases}
            \partial_t \rho^\varepsilon = \Delta \rho^\varepsilon - \partial_\theta(B^\varepsilon\rho^\varepsilon) -\lambda v\cdot \nabla_x \rho^\varepsilon,\\
            \rho(0,\cdot) = \rho_0.
        \end{cases}
    \end{equation}

    It is also a mild solution of the Fokker-Planck equation, so using the regularity we transfer the derivative with respect to $x$ to the solution, 

    \begin{equation*}
        \rho_t^\varepsilon = \rho_0 * g_t - \int_0^t \partial_\theta g_{t-s} * (B^\varepsilon_s\rho_s^\varepsilon) ds - \int_0^t g_{t-s} * (\lambda v\cdot \nabla_x \rho_s^\varepsilon)ds.
    \end{equation*}

    We denote by $m^\varepsilon$ the density of ants averaged over $\theta$,
    \begin{equation*}
        m^\varepsilon_t = \int \rho^\varepsilon_t d\theta.
    \end{equation*}

    Integrating equation \eqref{eq:FPregEstimateReg}, we obtain that $m^\varepsilon$ is a solution of the parabolic equation,

    \begin{equation}
        \label{eq:IntThetaFPregEstimateReg}
        \begin{cases}
            \partial_t m^\varepsilon = \Delta m^\varepsilon - \int \lambda v\cdot \nabla_x \rho^\varepsilon d\theta,\\
            m(0,\cdot) = \int \rho d\theta. 
        \end{cases}
    \end{equation}

    We can estimate the integral term as follows,
    \begin{align*}
        \int v \cdot \nabla_x \rho_t^\varepsilon d\theta & = (\int v\cdot \nabla_x \rho_0 d\theta) * g_t - \int_0^t \left(\int \partial_\theta g_{t-s} d\theta \right)* \left(\int v\cdot \nabla_x(B^\varepsilon_s\rho_s^\varepsilon)d\theta\right) ds \\ & \hspace{.5cm}- \int_0^t \int v\cdot \nabla_x g_{t-s} d\theta * \left(\int \lambda  v\cdot \nabla_x \rho_s^\varepsilon d\theta\right)ds.
    \end{align*}

    Similarly, as in the averaging lemma, we use the fact that, $ \int \partial_\theta g_{t-s} d\theta = 0$.
    We then take the $L^p$-norm, use Young convolution inequality and the Gr\"onwall type inequality Proposition~\ref{prop:SGrnwllIneq}, and obtain,
    \begin{align}
        \label{est:AverageFlowfield}
        \left\|\int v \cdot \nabla_x \rho_t^\varepsilon d\theta \right\|_p& \leq \left\|\int v\cdot \nabla_x \rho_0 d\theta\right\|_p + \lambda \int_0^t \left\|\int v\cdot \nabla_x g_{t-s} d\theta\right\|_1  \left\|\int v\cdot \nabla_x \rho_s^\varepsilon d\theta\right\|_pds, \nonumber\\
        & \leq  \left\|\int v\cdot \nabla_x \rho_0 d\theta\right\|_p + \lambda \int_0^t \left\| |\nabla_x g_{t-s}| \right\|_1  \left\|\int v\cdot \nabla_x \rho_s^\varepsilon d\theta\right\|_pds,\nonumber\\
        &\leq  \left\|\int v\cdot \nabla_x \rho_0 d\theta\right\|_p + \lambda\int_0^t  f^x_1(t-s) \left\|\int v\cdot \nabla_x \rho_s^\varepsilon d\theta\right\|_pds,\nonumber\\
        &\leq \left\|\int v\cdot \nabla_x \rho_0 d\theta\right\|_p M_\infty( \lambda,T),
    \end{align}
    where $M_\infty$ is the growth function of Proposition~\ref{prop:SGrnwllIneq}. Now, using an $L^p$ estimate \cite{krylov2008lectures} on the parabolic equation~\eqref{eq:IntThetaFPregEstimateReg}, we obtain the following bound on the norm of $m^\varepsilon$.

    \begin{equation}
        \label{est:AverageSobolev}
        \|m^\varepsilon\|_{W^{1,2}_p} \leq N\Big( \lambda\left\|\int v\cdot \nabla_x \rho^\varepsilon d\theta\right\|_{p,p} + \left\|\int\rho_0d\theta \right\|_{W^{2-2/p}_p} \Big),
    \end{equation}

    for some $N>0$ independent of $B^\varepsilon$ and $\rho_0^\varepsilon$. Combining estimates \eqref{est:AverageFlowfield} and \eqref{est:AverageSobolev}, we obtain that the sequence is uniformly bounded independently of $\varepsilon$ in $W^{1,2}_p$. This implies the existence of weak derivatives converging weakly in $L^p$ up to a subsequence, since from the stability result Lemma~\ref{lem:stab} we have the convergence in $C_t(L^p_x(L^1_\theta))$ of $\rho^\varepsilon$ to the solution and thus of $m^\varepsilon$ in $C_t(L^p_x)$, we conclude that $m\in W^{1,2}_p$.

    From Morrey's embedding theorem, for $\zeta = 1 - \frac{3}{p}$, then $m\in C^{\zeta}$. Plunging this in the equation for $c$, with initial regularity in $C^{2+\zeta}$, we obtain that $c\in C^{1+\zeta/2,2+\zeta}$. One can easily verify that from the explicit form of $B$, we have that,

    \begin{equation*}
        \partial_\theta B, B \in C^\zeta([0,T], \mathbb{T}\times \mathcal{D}).
    \end{equation*}

    Finally plugging this regularity in the Fokker-Planck equation, we obtain that \begin{equation*}
        \rho\in W^{1,2}_p([0,T],\mathcal{D}\times \mathbb{T}).
    \end{equation*}
\end{proof}
\subsection{Two-state model}
\label{sec:Twostates}
In this section we prove existence and uniqueness for the two states model \eqref{sys:TwostateFormidicae}, using similar estimates as in Theorem~\ref{thm:existuniqPara}. Throughout this section, we assume that the hypothesis in \ref{ass:Twostates} are satisfied.

We start the study with the two-states linear Fokker-Planck equation.

\begin{theorem}
    \label{thm:TwoStateFokkerPlanckMild}
    Let $4<p\leq \infty$, $T>0$, and $q\geq \frac{p}{p-1}$. Under assumption \eqref{ass:Twostates}, for any $B^\alpha,B^\beta \in L^p_{t,x}(L^\infty_\theta)$, and any initial condition $\rho^\alpha_0,\rho^\beta_0 \in L^q_x(L^1_\theta)_+\cap L^1_{x,\theta}$, there exists a unique couple $(\rho^\alpha,\rho^\beta) \in (C_t(L^q_x(L^1_\theta)_+ \cap L^1_{x,\theta}))^2$ positive solutions to the two states Fokker-Planck equation, in a mild and distributional sense, and the total mass is preserved.
    
    We also have the following stability estimate. If $(\rho^{\alpha,1},\rho^{\beta,1})$ and $(\rho^{\alpha,2},\rho^{\beta,2})$ are two solutions with same initial data, associated respectively to $(B^{\alpha,1},B^{\beta,1})$ and $(B^{\alpha,2},B^{\beta,2})$:
    \begin{equation}
        \label{est:stabTwostate}
        \sup_{t\in [0,T]} \Big(\|\rho^{\alpha,1}-\rho^{\alpha,2}\|_{q,1}+ \|\rho^{\beta,1}-\rho^{\beta,2}\|_{q,1}\Big) \leq C\Big[ F^\theta_{\frac{p}{p-1},\frac{p}{p-1}}(T)\big(\|B^{\alpha,1}-B^{\alpha,2}\|_{p,p,\infty} + \|B^{\beta,1}-B^{\beta,2}\|_{p,p,\infty}\big)\Big],
    \end{equation}
    for some $C$ depending on $\|B^{\alpha,i}\|$ for $i= 1,2$, $\|\rho^\alpha_0\|_{q,1}$, $\|\rho^\beta_0\|_{q,1}$, $\|\alpha\|_\infty$, $\|\beta\|_\infty$, $\lambda$ and $C_J$.
\end{theorem}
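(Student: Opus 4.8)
The plan is to treat the coupled system \eqref{sys:TwostateFormidicae} as a pair of single-state linear Fokker-Planck equations of the form \eqref{eq:generalFokkerPlanckLin}, in which the reaction couples the two species through a source term and a death rate, and then to reuse the machinery of Theorem~\ref{thm:FokkerPlanckMild}, Lemma~\ref{lem:stab} and Lemma~\ref{lem:PositivityLinFokkerPlanck}. Writing the system in mild form, a solution is a pair $(\rho^\alpha,\rho^\beta)$ with, for every $t\in[0,T]$,
\begin{equation*}
\rho^\alpha_t = \rho^\alpha_0 * g_t - \int_0^t \partial_\theta g_{t-s}*(B^\alpha_s\rho^\alpha_s)\,ds - \lambda\int_0^t \nabla_x g_{t-s}*(v\rho^\alpha_s)\,ds + \int_0^t g_{t-s}*\big(-\alpha\rho^\alpha_s + \beta J[\rho^\beta_s]\big)\,ds,
\end{equation*}
together with the symmetric identity for $\rho^\beta$ (swapping $\alpha\leftrightarrow\beta$ and $\rho^\alpha\leftrightarrow\rho^\beta$). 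I would define the solution map $\Psi$ on $E\times E$, where $E=C([0,u],L^q_x(L^1_\theta)\cap L^1_{x,\theta})$ is the space of Theorem~\ref{thm:FokkerPlanckMild}, by sending $(\nu^\alpha,\nu^\beta)$ to the two right-hand sides. The only terms not already controlled in Theorem~\ref{thm:FokkerPlanckMild} are the cross sources $\beta J[\rho^\beta]$ and $\alpha J[\rho^\alpha]$; since under \eqref{ass:Twostates} the operator $J$ is $C_J$-Lipschitz on $L^1_\theta$ and acts only in $\theta$, one has $\|\beta(J[\nu^{\beta,1}]-J[\nu^{\beta,2}])\|_{q,1}\le \|\beta\|_\infty C_J\|\nu^{\beta,1}-\nu^{\beta,2}\|_{q,1}$, and these terms are convolved against $g_{t-s}$, whose $\|g_{t-s}\|_{1,1}$ is bounded, hence integrable in time.

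Running the contraction estimates of Theorem~\ref{thm:FokkerPlanckMild} separately for each component and summing the $L^q_x(L^1_\theta)$ and $L^1_{x,\theta}$ bounds, the extra cross contributions add terms proportional to $F^0_{1,1}(u)$ times the relevant difference of the other species; thus for $u$ small enough $\Psi$ is a contraction on $E\times E$, Banach-Picard yields a unique local solution, and uniqueness on $[0,T]$ follows by the usual continuation argument. To rule out blow-up I would apply the a priori growth estimate \eqref{est:gronwallFPLq} to each equation, noting that $\eta^\alpha=\beta J[\rho^\beta]$ satisfies $\|\eta^\alpha_s\|_{q,1}\le\|\beta\|_\infty C_J\|\rho^\beta_s\|_{q,1}$ (using $J[0]\equiv0$), and symmetrically. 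This produces a linear coupled Grönwall system in $(\|\rho^\alpha_t\|_{q,1},\|\rho^\beta_t\|_{q,1})$ which closes via Proposition~\ref{prop:SGrnwllIneq}, giving global existence in $(C_t(L^q_x(L^1_\theta)\cap L^1_{x,\theta}))^2$.

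For positivity I would run the Picard iteration from the nonnegative seed $(\rho^{\alpha,0},\rho^{\beta,0})=(0,0)$: if $\rho^{\beta,n}\ge0$ then $J[\rho^{\beta,n}]\ge0$ by positivity of $J$, so the source $\beta J[\rho^{\beta,n}]\ge0$ and Lemma~\ref{lem:PositivityLinFokkerPlanck} gives $\rho^{\alpha,n+1}\ge0$; by induction both iterates remain nonnegative, and since the iteration converges in $E\times E$ to the unique solution, the limit is nonnegative. Mass preservation follows by integrating the distributional formulation \eqref{eq:LinFokkPlanckDistributionalSol} (the divergence-form operator $\mathcal{L}_{B}$ integrates to zero): one finds $\frac{d}{dt}\int\rho^\alpha = -\int\alpha\rho^\alpha + \int\beta J[\rho^\beta]$, and since $J$ preserves the $\theta$-mass while $\alpha,\beta$ depend only on $x$, we have $\int\beta J[\rho^\beta]\,dx\,d\theta=\int\beta\rho^\beta\,dx\,d\theta$; adding the two equations cancels the reaction terms, so $\int(\rho^\alpha+\rho^\beta)$ is constant.

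Finally the stability estimate \eqref{est:stabTwostate} is obtained exactly as in Lemma~\ref{lem:stab}: subtracting the mild formulations of two solutions with the same data, the drift differences enter through $\|B^{\alpha,1}-B^{\alpha,2}\|_{p,p,\infty}F^\theta_{\frac{p}{p-1},\frac{p}{p-1}}(T)$ as before, while the reaction terms contribute $\beta(J[\rho^{\beta,1}]-J[\rho^{\beta,2}])$, controlled by $\|\beta\|_\infty C_J\|\rho^{\beta,1}-\rho^{\beta,2}\|_{q,1}$; summing the two component estimates and applying Proposition~\ref{prop:SGrnwllIneq} to $\|\rho^{\alpha,1}-\rho^{\alpha,2}\|_{q,1}+\|\rho^{\beta,1}-\rho^{\beta,2}\|_{q,1}$ closes the bound. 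I expect the main technical obstacle to be the bookkeeping of the coupling: because the $L^q$ and $L^1$ estimates feed into one another (as in \eqref{est:gronwallFPL1}) and the two species feed into one another through $J$, the Grönwall argument must be run on the vector of all four norms simultaneously rather than one at a time, and one must verify that every off-diagonal coupling carries an integrable-in-time kernel so that Proposition~\ref{prop:SGrnwllIneq} still applies.
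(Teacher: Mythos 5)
Your proposal is correct and follows essentially the same route as the paper: a Banach--Picard fixed point on $E\times E$ treating the cross terms $J[\cdot]$ as frozen sources, a coupled Gr\"onwall argument on the summed $L^q_x(L^1_\theta)$ and $L^1_{x,\theta}$ norms via Proposition~\ref{prop:SGrnwllIneq} to exclude blow-up, positivity inherited from Lemma~\ref{lem:PositivityLinFokkerPlanck} and the positivity of $J$, and the stability bound obtained exactly as in Lemma~\ref{lem:stab} with the extra $\|\beta\|_\infty C_J$ contributions. Your explicit verification of mass conservation (using that $J$ preserves the $\theta$-mass and that $\alpha,\beta$ depend only on $x$) is in fact slightly more careful than the paper's, which merely asserts it.
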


\begin{proof}
    Denote by $(E,\|\cdot\|_E)$ the Banach space introduced in the proof of Theorem~\ref{thm:FokkerPlanckMild}. We define $\Psi : E^2_+ \mapto E^2_+$ using Theorem~\ref{thm:FokkerPlanckMild} as,
    \begin{equation*}
        \Psi(\nu^{\alpha},\nu^{\beta}) = (\rho^{\alpha},\rho^{\beta}),
    \end{equation*}
    where $(\rho^{\alpha},\rho^{\beta})$ are the positive mild solutions of the Fokker-Planck equations:
    \begin{align}
        \begin{cases}
            \partial_t \rho^{\alpha} = \mathcal{L}_{B^\alpha} \rho^{\alpha} - \alpha \rho^{\alpha} + \alpha J [\nu^{\beta}],\\
            \partial_t \rho^{\beta} = \mathcal{L}_{B^\beta} \rho^{\beta} - \beta \rho^{\beta} + \beta J[\nu^{\alpha}],
        \end{cases}
    \end{align}

    where $\mathcal{L_{B^\alpha}}, \mathcal{L_{B^\beta}}$ are defined as in \eqref{eq:FormDiffOpB}.
    From Lemma~\ref{lem:stab} we obtain that $\Psi$ is a contraction map for $T$ sufficiently small. There exists a solution up to a critical time. Proceeding similarly as in the one-state case,
    for $r=q$ or $r =\frac{p}{p-1}$, we then have the following estimate,
    \begin{align*}
        \|\rho^{\alpha}_t\|_{r,1} & \leq \int_0^t \| \partial_\theta g_{t-s} * B^\alpha_s \rho^{\alpha}_s\|_{r,1} ds + \lambda \int^t_0 \|\nabla_x g_{t-s} * (v\rho^{\alpha}_s)\|_{r,1} ds \\
        &\hspace{.5cm}  + \int^t_0 \|g_{t-s} * \big(\alpha J[\rho^{\beta}_s]\big)\|_{r,1} ds, \\
        &  \leq  \int_0^t f^\theta_{\frac{p}{p-1}}(t-s)\| B^\alpha_s\|_{p,\infty} \|\rho^{\alpha}_s\|_{r,1} ds + \lambda \int^t_0 f^x_{1}(t-s)\|\rho^{\alpha}_s\|_{r,1} ds\\
            &\hspace{.5cm}  + \|\alpha\|_\infty C_J \int^t_0 f_1^0(t-s) \|\rho^{\beta}_s\|_{r,1} ds.
    \end{align*}
    Combining this with the same estimate on $\rho^\beta$, we prevent finite time blow-up of the solution in $L^r$, adapting the arguments of Theorem~\ref{thm:FokkerPlanckMild} we similarly obtain a growth estimate for the $L^1$-norm. We conclude that there exists a global-in-time solution of the two-state Fokker-Planck, the solution is positive, it is a distributional solution, and its total mass is preserved:
    \begin{equation*}
        \int \rho^\alpha_t + \rho^\beta_t dxd\theta = \int \rho^\alpha_0 + \rho^\beta_0 dxd\theta \hspace{.5cm}\forall t \in [0,T].
    \end{equation*}
    Finally, we prove a stability estimate w.r.t the drifts. Let $(\rho^{\alpha,1},\rho^{\beta,1})$,$(\rho^{\alpha,2},\rho^{\beta,2})$ be associated respectively with drift $(B^{\alpha,1},B^{\beta,1})$ and $(B^{\alpha,2},B^{\beta,2})$.

    {\small
    \begin{align*}
        \|\rho^{\alpha,1}_t - \rho^{\alpha,2}_t\|_{r,1} & \leq \int_0^t \| \partial_\theta g_{t-s} * (B^{\alpha,1}_s\rho^{\alpha,1} - B^{\alpha,2}_s\rho^{\alpha,2})\|_{r,1} ds + \lambda \int^t_0 \|\nabla_x g_{t-s} * v(\rho^{\alpha,1} - \rho^{\alpha,2})\|_{r,1} ds \\
        &\hspace{.5cm}  + \int^t_0 \|g_{t-s} * \alpha(J[\rho^{\beta,1}] - J[\rho^{\beta,2}])\|_{r,1} ds, \\
        &  \leq  \int_0^t f^\theta_{\frac{p}{p-1}}(t-s) \Big[\| B^{\alpha,1}_s\|_{p,\infty} \|\rho^{\alpha,1}_s - \rho^{\alpha,2}_s\|_{r,1} + \| B^{\alpha,1}_s - B^{\alpha,2}\|_{p,\infty} \|\rho^{\alpha,2}_s\|_{r,1}\Big] ds \\
        &\hspace{.5cm} + \lambda \int^t_0 f^x_{1}(t-s)\|\rho^{\alpha,1}_s - \rho^{\alpha,2}_s\|_{r,1} ds
        + \|\alpha\|_\infty L_J \int^t_0 f_1^0(t-s) \|\rho^{\beta,1}_s - \rho^{\beta,2}_s\|_{r,1} ds.
    \end{align*}}
    The same estimate holds for $\rho^{\beta,1},\rho^{\beta,2}$, so that using G\"onwall's Lemma, we obtain the desired estimate.
\end{proof}

\begin{lemma}[Averaging Lemma]
    \label{lem:averageTS}
    Let  $(\rho^\alpha,\rho^\beta) \in (C_t(L^q_x(L^1_\theta)_+ \cap L^1_{x,\theta}))^2$ be the unique solution to the two states Fokker-Planck equation, the following estimate holds for all times,

    \begin{equation*}
         \sup_{t\in [0,T]}\left(\left\| \int \rho^\alpha_t d\theta_t \right\|_p+ \left\|\int \rho^\beta_t d\theta\right\|_p \right) \leq \left(\left\|\int\rho^\alpha_0d\theta\right\|_p+\left\|\int\rho^\beta_0d\theta\right\|_p\right)M_\infty(C_\lambda + C_J,T).
    \end{equation*}
\end{lemma}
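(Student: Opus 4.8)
The plan is to mirror the single-state Averaging Lemma~\ref{lem:average}, carrying both spatial marginals $m^\alpha_t \defeq \int \rho^\alpha_t\, d\theta$ and $m^\beta_t \defeq \int \rho^\beta_t\, d\theta$ through the argument simultaneously. First I would write each density in its mild form (from Theorem~\ref{thm:TwoStateFokkerPlanckMild}) and integrate over $\theta$. As in Lemma~\ref{lem:average}, the crucial simplification is that $\int \partial_\theta g_{t-s}\, d\theta \equiv 0$, so the singular $\theta$-drift terms carrying $B^\alpha, B^\beta$ disappear entirely upon angular integration; this is precisely what makes the estimate uniform in the drifts. The diffusion kernel reduces to the spatial marginal $\bar g_t \defeq \int g_t\, d\theta$, which is mass-preserving ($\|\bar g_t\|_{1} = 1$), and the transport kernel to $\int \nabla_x g_{t-s}\, d\theta = \nabla_x \bar g_{t-s}$ with $\|\nabla_x \bar g_{t-s}\|_{1} \leq C f^x_1(t-s)$ from Proposition~\ref{prop:funHesti}.

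Second, I would bound the two surviving families of terms. The transport term is handled by positivity (guaranteed by Theorem~\ref{thm:TwoStateFokkerPlanckMild}): since $\rho^\alpha \geq 0$ and $|v| = 1$, one has the pointwise bound $\big|\int v\,\rho^\alpha_s\, d\theta\big| \leq \int \rho^\alpha_s\, d\theta = m^\alpha_s$, hence $\|\int v\,\rho^\alpha_s\, d\theta\|_p \leq \|m^\alpha_s\|_p$, and likewise for $\beta$. The genuinely new terms are the transition contributions; after angular integration these read $\int \beta\, J[\rho^\beta_s]\, d\theta$ and $\int \alpha\, J[\rho^\alpha_s]\, d\theta$. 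Using positivity of $J[\rho^\beta_s]$ together with $J[0]\equiv 0$ and the Lipschitz bound from \eqref{ass:Twostates}, one gets $\|J[\rho^\beta_s(x,\cdot)]\|_{L^1_\theta} \leq C_J\, m^\beta_s(x)$, so that the $L^p_x$ norms are controlled by $\|\beta\|_\infty C_J \|m^\beta_s\|_p$ and $\|\alpha\|_\infty C_J \|m^\alpha_s\|_p$ respectively (the mass-preservation of $J$ in fact gives $\int J[\rho^\beta_s]\, d\theta = m^\beta_s$ exactly). Crucially, the transition kernel is just $\bar g_{t-s}$ with $\|\bar g_{t-s}\|_1 = 1$, so these terms contribute a bounded, time-integrable coefficient rather than a singular one.

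Finally, I would sum the two resulting inequalities for $\|m^\alpha_t\|_p$ and $\|m^\beta_t\|_p$ and close with the Gr\"onwall-type Proposition~\ref{prop:SGrnwllIneq}, applied to $S_t \defeq \|m^\alpha_t\|_p + \|m^\beta_t\|_p$ at the endpoint exponent $p=\infty$: the transport kernel $f^x_1(t-s) \leq C(1 + (t-s)^{-1/2})$ is exactly of the admissible form $(t-s)^{-1/2} + 1$, and the transition kernel is bounded, so both fit the hypotheses with $c_1$ a constant collecting $\lambda$, $C_J$ and the (finite) sup-norms of the switching rates $\alpha,\beta$. Since $M_\infty$ is non-decreasing, this constant may be written as $C_\lambda + C_J$, yielding the claimed $S_t \leq S_0\, M_\infty(C_\lambda + C_J, T)$.

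I expect the main obstacle to be purely the bookkeeping around the transition terms, which are the only feature absent from the single-state Lemma~\ref{lem:average}: one must check that they do not reintroduce any $\theta$-dependence that would obstruct the vanishing of the singular drift, and that the operator $J$ — which reshuffles mass across angles — affects only the angular profile and not the spatial marginal, so that $\int J[\rho^\beta]\, d\theta$ collapses to a multiple of $m^\beta$. Verifying this reduction (via positivity together with the Lipschitz and mass-preservation properties of $J$) is what keeps the estimate closed in the two marginals alone and uniform in $B^\alpha, B^\beta$.
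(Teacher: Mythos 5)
Your proposal matches the paper's proof: integrate the mild formulation over $\theta$ so that $\int \partial_\theta g_{t-s}\,d\theta \equiv 0$ kills the drift terms, bound the transport term using positivity and $|v|=1$, control the transition terms via the properties of $J$, then sum the two marginal inequalities and close with the Gr\"onwall Proposition~\ref{prop:SGrnwllIneq}. The only cosmetic differences are that the paper bounds $\|\int J[\rho]\,d\theta\|_p$ through the Lipschitz constant $C_J$ (with $J[0]\equiv 0$) rather than the exact identity from mass preservation, and it carries the death terms $-\alpha\rho^\alpha$, $-\beta\rho^\beta$ explicitly in the estimate --- both points you also address.
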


\begin{proof}
    \begin{align*}
        \left\|\int \rho^\alpha_t d\theta_t \right\|_p+ \left\|\int \rho^\beta_t d\theta\right\|_p & \leq \left\|\int\rho^\alpha_0d\theta\right\|_p+\left\|\int\rho^\beta_0d\theta\right\|_p\\
        &\hspace{.5cm} + \lambda \int_0^t\left\|\int \nabla_x g_{t-s}d\theta\right\|_1\left(\left\|\int v\rho^\alpha_s d \theta\right\|_p + \left\|\int v\rho^\beta_s d\theta\right\|_p\right)ds\\
        &\hspace{.5cm}+\int_0^t\left\|\int g_{t-s}d\theta\right\|_1\left(\|\alpha\|_\infty \left\|\int \rho^\alpha_s d \theta\right\|_p + \|\beta\|_\infty 
        \left\|\int\rho^\beta_s d\theta\right\|_p\right)ds\\
        &\hspace{.5cm}+\int_0^t\left\|\int g_{t-s}d\theta\right\|_1\left(\|\alpha\|_\infty \left\|\int J[\rho^\beta_s] d \theta\right\|_p + \|\beta\|_\infty 
        \|\int J[\rho^\alpha_s] d\theta\|_p\right)ds,\\
        & \leq \left\|\int\rho^\alpha_0d\theta\right\|_p+\left\|\int\rho^\beta_0d\theta\right\|_p\\
        &\hspace{.5cm} +\lambda \int_0^t f^x_{1,1}(t-s)\left(\left\|\int \rho^\alpha_s d\theta\right\|_p+\left\|\int \rho^\beta_s d\theta\right\|_p\right)ds\\
        &\hspace{.5cm} +C_J \int_0^t f^0_{1,1}(t-s)\left(\left\|\int \rho^\alpha_s d\theta\right\|_p+\left\|\int \rho^\beta_s d\theta\right\|_p\right)ds,\\
        &\leq \left[\left\|\int\rho^\alpha_0d\theta\right\|_p+\left\|\int\rho^\beta_0d\theta\right\|_p\right]M_\infty(\lambda + C_J,T).
    \end{align*}
\end{proof}

\begin{proof}[Proof of Theorem~\ref{thm:Twostates}]
    Applying the same strategy, we obtain local Lipschitz estimates from Schauder estimates on $c^\alpha, c^\beta$, and the stability estimate \eqref{est:stabTwostate} proved in Theorem~\ref{thm:TwoStateFokkerPlanckMild}. One then obtains the following \textit{a priori} uniform bound estimate on $B^\alpha,B^\beta$:
    \begin{align*}
        \|B^\mathfrak{a}\|_{p,p,\infty}&\leq \|c^\mathfrak{a}\|_{W^{1,2}_p},\\& \leq N \left(\|c_0^\mathfrak{a}\|_{W^{2-2/p}_p} + \left\|G^\mathfrak{a}\left[\int \rho^\alpha d\theta,\int \rho^\beta d\theta\right]\right\|_{p,p}\right),\\
        &\leq N \left(\|c_0^\mathfrak{a}\|_{W^{2-2/p}_p} +  C_G T^{1/p}\sup_{t\in [0,T]}\left[\left\|\int \rho^\alpha_t d\theta_t \right\|_p+ \left\|\int \rho^\beta_t d\theta\right\|_{p}\right]\right),
    \end{align*}
    for $\mathfrak{a} = \alpha$ or  $\mathfrak{a} = \beta$. We conclude from the two states averaging Lemma~\ref{lem:averageTS}.
\end{proof}

\section{Numerical simulations}
\label{sec:NumSim}

In this section, we provide numerical simulations illustrating trail pattern formation. The Python source code is available on GitHub at:
\begin{center}    \texttt{https://github.com/MatthiasRakotomalala/CurvatureChemotaxis}.
\end{center}

We propose a Monte-Carlo particle simulation on the Torus, and a Finite Difference scheme for a simplified system.

\subsubsection*{Monte-Carlo particle simulation for the Mckean-Vlasov equation}

First, we provide Monte-Carlo particle simulation results for the Mckean-Vlasov equation. We refer to the thesis of M.T \cite[Chapter 7]{tomasevic2018probabilistic} for the case of the whole plane $\dR^2$, we here study the case of the Torus, this allows us to observe reinforcing trails whereas in the case of the whole space, small trails will go to infinity. As we shall see, the Torus also comes with the benefit of fast Markovian numerical methods using spectral resolution. Let $N\geq 2$ denote the number of particles. We start with the following weakly interacting particle system on the Torus:
\begin{equation}
    \label{sys:PartStrongTT}
    \begin{cases}
        dX^{i}_t = \lambda v(\Theta^i_t)dt + \sqrt{2\sigma_x}dW^{1,i}_t,\\
        d\Theta^i_t = \chi B(\Theta^i_t, \nabla c^{i,N}(t, X^i_t), \nabla^2 c^{i,N}(t, X^i_t))dt + \sqrt{2\sigma_\theta}dW^{2,i}_t,\\
        \displaystyle \partial_t c^{i,N} = -\gamma c^{i,N} + \sigma_c \Delta c^{i,N} + \frac{1}{N-1}\sum_{j=1,j\neq i}^N(\delta_{X^j} * g_{\sigma_c \varepsilon}), \text{ on } (0,T)\times \mathbb{T}^2,\\
        \text{ for }0 \leq i \leq N.
    \end{cases}
\end{equation}

Recalling that the above system represents a population of $N$ interacting particles, and is associated with the Mckean-Vlasov equation~\eqref{sys:mckeanvlasov}. In order to remove self-interaction, each particle interacts with its own chemical field $c^{i,N}$, solution to a parabolic equation with the regularised empirical measure of the other particles as a source term:
\begin{equation*}
     \frac{1}{N-1}\sum_{j=1,j\neq i}^N(\delta_{X^j} * g_{\sigma_c \varepsilon}),
\end{equation*}
where $g_{\sigma_c \varepsilon}$ is the fundamental solution to the heat equation on the Torus at time $\sigma_c \varepsilon$, for some parameter $\varepsilon$ going to zero as $N$ goes to infinity. In this paper, we do not address the problem of convergence. Instead, in this section, we assume that this particle system converges to \eqref{sys:mckeanvlasov}, implying that the error with respect to the fully interacting system is of order \(1/N\) and can be considered negligible.
That is, let $c$ be the full interaction field, solution of:
\begin{align*}
    \partial_t c = -\gamma c + \sigma_c \Delta c + \frac{1}{N}\sum^N_{j=1}(\delta_{X^j} * g_{\sigma_c \varepsilon}), \text{ on }(0,T)\times \mathbb{T}^2,
\end{align*}
then the error with the $i$-excluded field $c^{i,N}$ is:
\begin{equation*}
    \|c - c^{i,N}\|_{C^{1+\alpha/2, 2+ \alpha}} = o(1/N).
\end{equation*}
For $1\leq i \leq N$, we define $c^i$ as:
\begin{equation}
    \label{eq:simCipart}
    \partial_t c^i = -\gamma c^{i} + \sigma_c \Delta c^{i} + \frac{1}{N-1}(\delta_{X^i_t} * g_{\sigma_c \varepsilon}).
\end{equation}
So that,
\begin{equation*}
    c^{i,N} = \sum_{j=1,j\neq i}^N c^j.
\end{equation*}

We will solve equation \eqref{eq:simCipart} in the frequency domain, since in the case of the Torus the solution writes as a series. This motivates us to introduce the interacting system with Fourier truncated chemotactic fields.
For $\xi, \zeta \in \mathbb{Z}$, we denote by $c^i(t)[\xi, \zeta]$ the Fourier coefficient in the spatial variables of $c^i(t)$. For $N_F \geq 1$, we denote by $c^i_{N_F}$ the truncated series up to $N_F$:

{\small
\begin{equation*}
    c^i_{N_F}(t,x_1, x_2) = \sum_{-N_F\leq \xi, \zeta \leq N_F} \Re (c^i(t)[\xi, \zeta])\cos(2\pi (\xi x_2 + \zeta x_2))- \Im (c^i(t)[\xi, \zeta]) \sin(2\pi (\xi x_2 + \zeta x_2)),
\end{equation*}
}
where $\Re$ and $\Im$ are respectively the real and imaginary part operators. Similarly, we write $c^{i,N}_{N_F}$, the truncated series of the field $c^{i,N}$:

\begin{equation*}
    c^{i,N}_{N_F} = \sum_{j=1,j\neq i}^N c^j_{N_F}.
\end{equation*}

Using properties of the Fourier series, the derivatives of the truncated Fourier series are written as:
{\small
\begin{align*}
    \partial_{x_1}c^i_{N_F} &=  \sum_{-N_F\leq \xi, \zeta \leq N_F} -2\pi \xi \big[\Im (c^i(t)[\xi, \zeta])\cos(2\pi (\xi x_1 + \zeta x_2))+ \Re (c^i(t)[\xi, \zeta])\xi \sin(2\pi (\xi x_1 + \zeta x_2))\big],\\
    \partial_{x_2}c^i_{N_F} &=  \sum_{-N_F\leq \xi, \zeta \leq N_F} -2\pi \zeta \big[\Im (c^i(t)[\xi, \zeta])\cos(2\pi (\xi x_1 + \zeta x_2))+ \Re (c^i(t)[\xi, \zeta])\xi \sin(2\pi (\xi x_1 + \zeta x_2))\big],\\
    \partial_{x_1x_1}c^i_{N_F} &= \sum_{-N_F\leq \xi, \zeta \leq N_F} - 4\pi^2 \xi^2 \big[\Re (c^i(t)[\xi, \zeta])\cos(2\pi (\xi x_1 + \zeta x_2))- \Im (c^i(t)[\xi, \zeta]) \sin(2\pi (\xi x_1 + \zeta x_2))\big],\\
    \partial_{x_1x_2}c^i_{N_F} &= \sum_{-N_F\leq \xi, \zeta \leq N_F} - 4\pi^2 \xi \zeta \big[\Re (c^i(t)[\xi, \zeta])\cos(2\pi (\xi x_1 + \zeta x_2))- \Im (c^i(t)[\xi, \zeta]) \sin(2\pi (\xi x_1 + \zeta x_2))\big],\\
    \partial_{x_2x_2}c^i_{N_F} &= \sum_{-N_F\leq \xi, \zeta \leq N_F} - 4\pi^2 \zeta^2 \big[\Re (c^i(t)[\xi, \zeta])\cos(2\pi (\xi x_1 + \zeta x_2))- \Im (c^i(t)[\xi, \zeta]) \sin(2\pi (\xi x_1 + \zeta x_2))\big].
\end{align*}
}
From the partial differential equation \eqref{eq:simCipart} we obtain the following ordinary differential equations with random coefficients in the frequency domain. For $0\leq i \leq N, -N_F \leq \xi, \zeta \leq N_F$:
\begin{equation}
    \label{eq:simCipartFreq}
    \frac{d}{dt}c^i(t)[\xi, \zeta] = -(\gamma + \sigma_c(\xi^2 + \zeta^2))c^i(t)[\xi, \zeta] + \frac{1}{N-1}(\delta_{X^i_t} * g_{\sigma_c \varepsilon})[\xi, \zeta],
\end{equation}

with $(\delta_{X^i_t} * g_{\sigma_c \varepsilon})[\xi, \zeta]$ the Fourier coefficients of the regularised Dirac mass, they are explicitly given by:
\begin{equation*}
    (\delta_{X^i_t} * g_{\sigma_c \varepsilon})[\xi, \zeta] = \int e^{i2\pi \xi x_1 + \zeta x_2}\delta_{X^i_t}(dx) e^{-\sigma_c \varepsilon (\xi^2 + \zeta^2)} = e^{i2\pi (\xi X^{i,1}_t + \zeta X^{i,2}_t)} e^{-\sigma_c \varepsilon (\xi^2 + \zeta^2)},
\end{equation*}
with $X^i_t = (X^{i,1}_t, X^{i,2}_t)$.

Summing up, we obtain the following approximated system with truncated chemotactic fields.
\begin{equation}
    \begin{cases}
        dX^{i}_t = \lambda v(\Theta^i_t)dt + \sqrt{2\sigma_x}dW^{1,i}_t,\\
        d\Theta^i_t = \chi B(\Theta^i_t, \nabla c^{i,N}_{N_F}(t, X^i_t), \nabla^2 c^{i,N}_{N_F}(t, X^i_t))dt + \sqrt{2\sigma_\theta}dW^{2,i}_t,\\
        \frac{d}{dt}c^i[\xi, \zeta] = -(\gamma + \sigma_c(\xi^2 + \zeta^2))c^i[\xi, \zeta] + \frac{1}{N-1}(\delta_{X^i_\cdot} * g_{\sigma_c \varepsilon})[\xi, \zeta].\\
        1\leq i\leq N, -N_F \leq \xi, \zeta \leq N_F,
    \end{cases}
\end{equation}

where $\nabla c^{i,N}_{N_F}(t, X^i_t)$ and $\nabla^2 c^{i,N}_{N_F}(t, X^i_t))$ are evaluated using the coefficients $c^i(t)[\xi, \zeta]$ as described earlier. We obtain an autonomous system of $(2+ (2N_F+1)^2)N$-equations. And we then use the Euler-Maruyama forward scheme for the stochastic differential equations and the Euler implicit scheme for the ordinary differential equations on the Fourier coefficients.

It is remarkable that this simulation is linear in the number of particles $N$. One should evaluate the derivatives of $c^{i,N}$ by first summing all the fields coefficients:
\begin{equation*}
    c(t)[\xi,\zeta]=\sum_{i=1}^N c^i(t)[\xi,\zeta],
\end{equation*}
obtaining the coefficients of the total field $c(t) = \sum_i c^i(t)$, and then subtracting $c^i(t)[\xi,\zeta]$, to preserve a linear algorithm. The complexity of the simulation is:
\begin{equation*}
    \mathcal{O}(N_t N N_F^2),
\end{equation*}

where $N_t$ is the number of time steps. The simulation parameters are $\varepsilon, N_t$ and $N_F$. One should take $\varepsilon$ small enough, $N_t$ should be sufficiently large for the convergence of the Euler-Maruyama scheme, and $N_F$ sufficiently large, depending on $\sigma_c$ for the residual of the series to be negligeable.

\begin{figure}
    \centering
    \includegraphics[width=\linewidth]{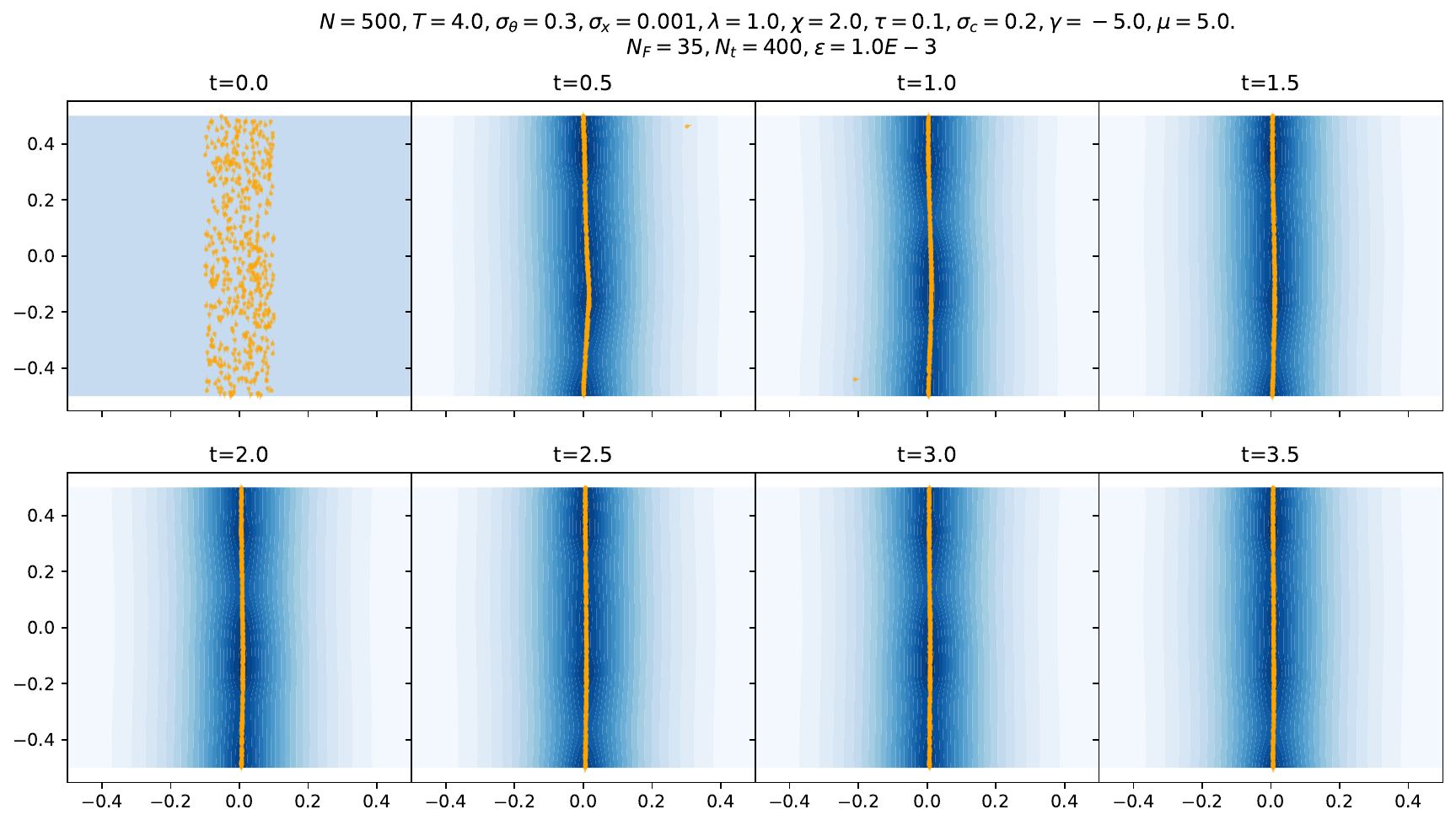}
    \caption{Monte-Carlo particle simulation for the McKean-Vlasov equation at eight different time points. The concentration of the chemotactic field is shown in \textit{blue}, while the particles are represented as points in \textit{orange}, each with an arrowhead indicating the direction of $v(\Theta^i_t)$.}
    \label{fig:MCFrSim1}
\end{figure}

\begin{figure}
    \centering
    \includegraphics[width=\linewidth]{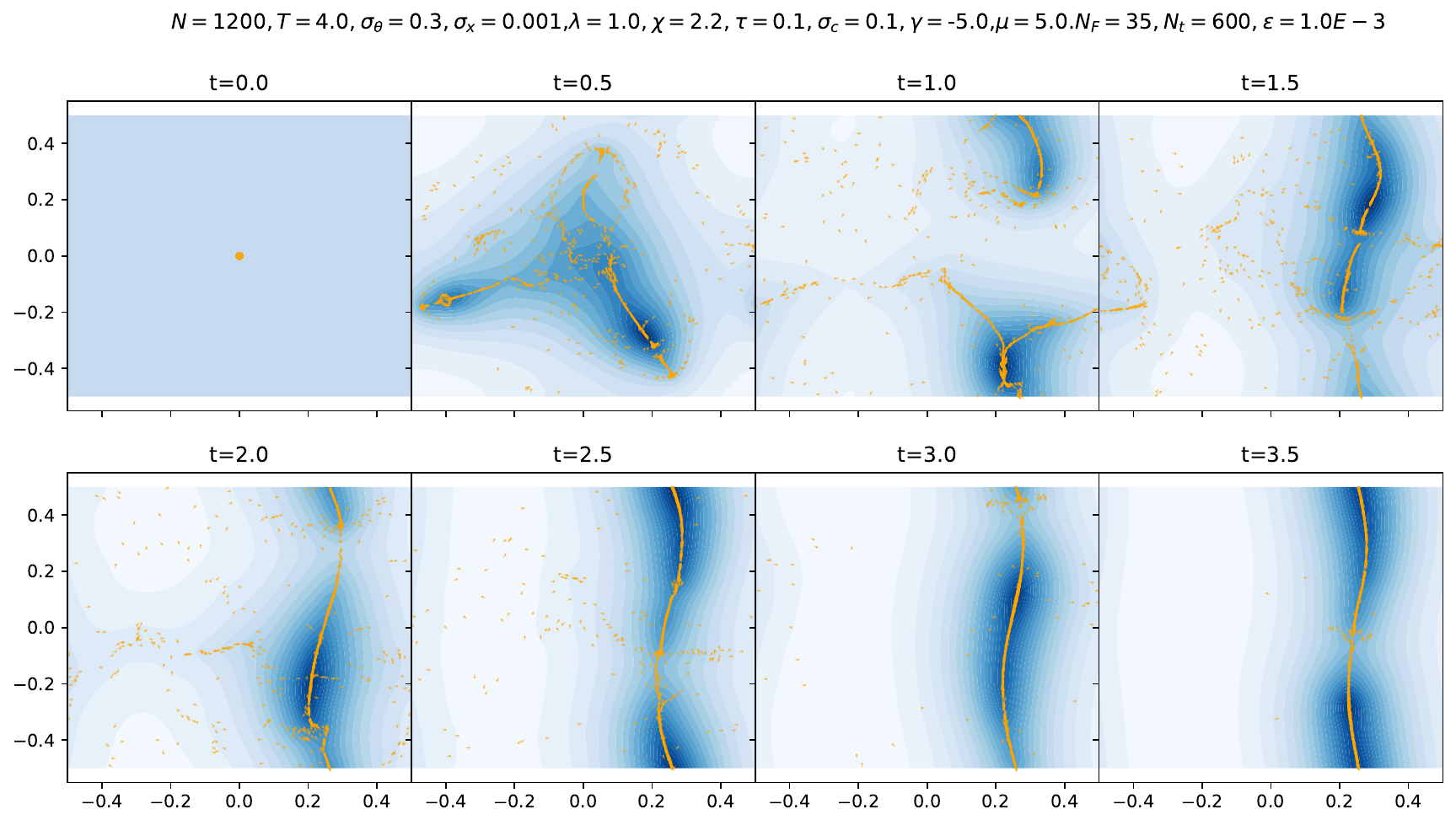}
    \caption{Monte-Carlo particle simulation for the McKean-Vlasov equation at eight different time points.}
    \label{fig:MCFrSim2}
\end{figure}

\begin{figure}
    \centering
    \includegraphics[width=\linewidth]{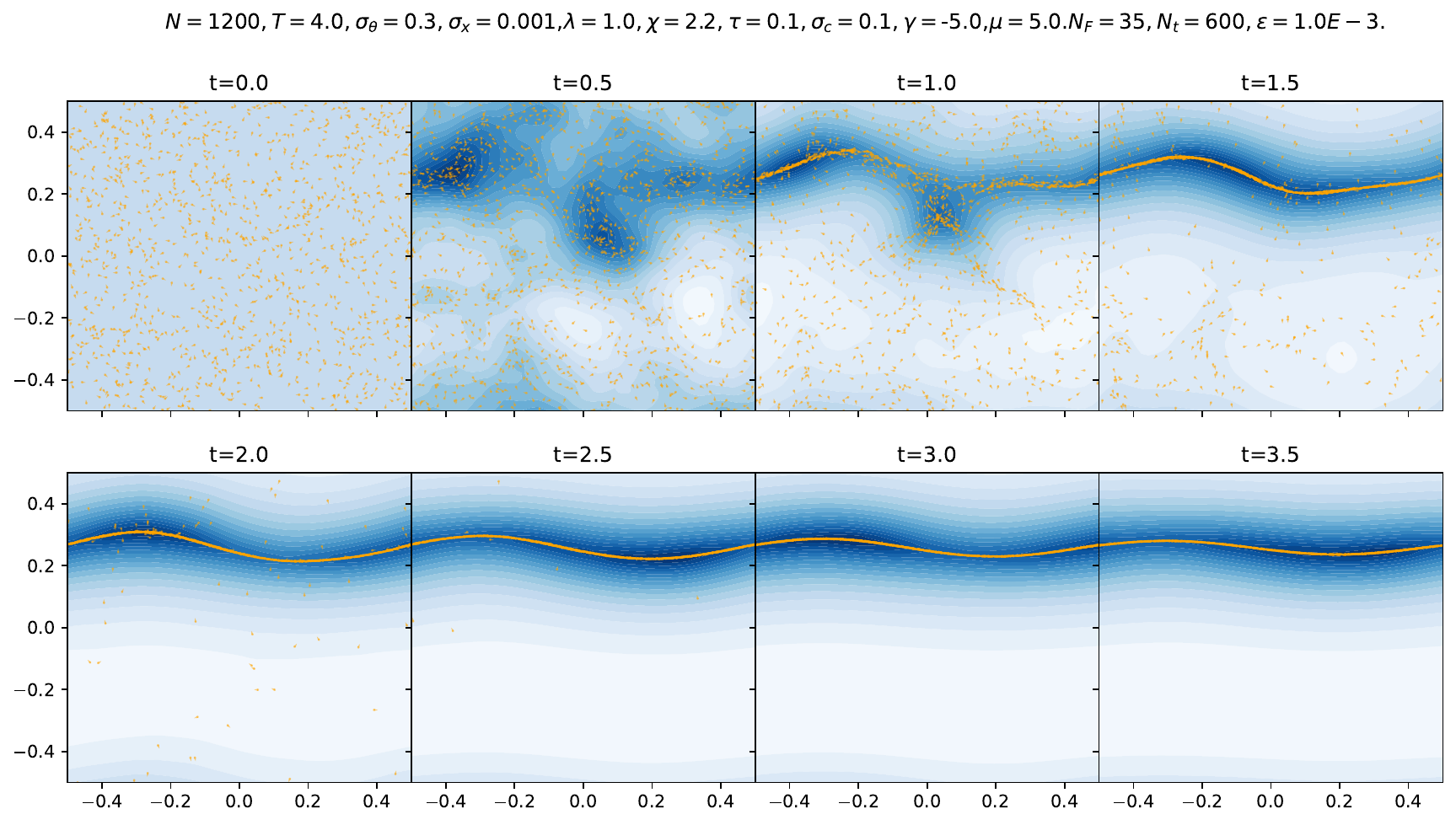}
    \caption{Monte-Carlo particle simulation for the McKean-Vlasov equation at eight different time points.}
    \label{fig:MCFrSim3}
\end{figure}

\begin{figure}
    \centering
    \includegraphics[width=\linewidth]{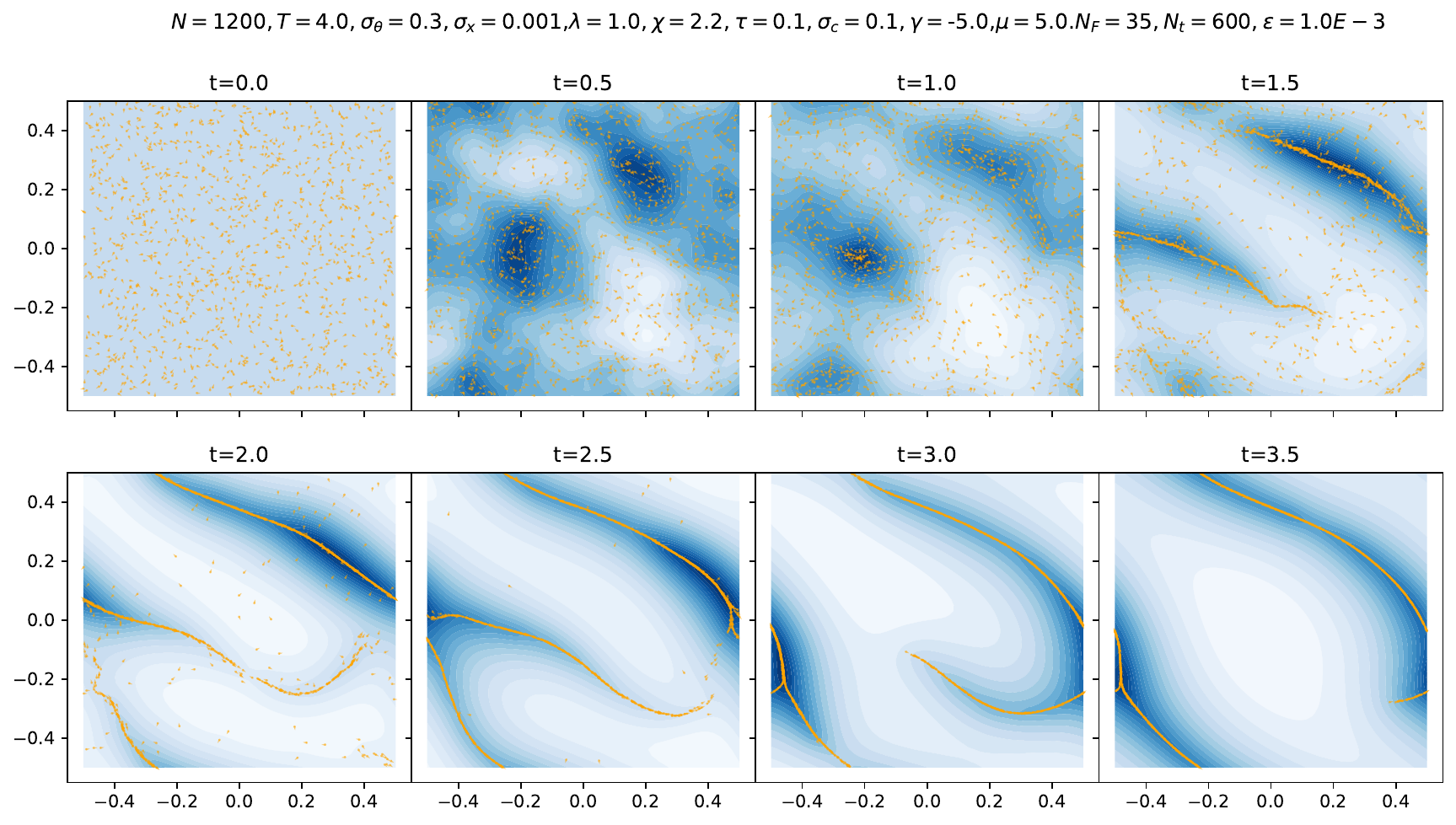}
    \caption{Monte-Carlo particle simulation for the McKean-Vlasov equation at eight different time points.}
    \label{fig:MCFrSim4}
\end{figure}

Figures~\ref{fig:MCFrSim1}, \ref{fig:MCFrSim2}, \ref{fig:MCFrSim3}, and \ref{fig:MCFrSim4} represent the evolution of four simulations obtained from the procedure described above. The concentration of the chemotactic field is shown in \textit{blue}, while the particles are represented as points in \textit{orange}, each with an arrowhead indicating the direction of $v(\Theta^i_t)$. 

We call a trail a curve along which the concentration of pheromones is locally maximal. In these simulations, we observe collective behavior, where groups of particles move along the same trail within the chemotactic field. Notice from the orientation of the arrowheads that the particles follow the trail in both directions. In a neighborhood around these trails, particles are attracted to follow these paths (see discussion in Section~\ref{sec:DerMod}). The particles are producing the chemical at their position, so if particles are aggregating on a ridge of the chemical field, they reinforce the trail. These reinforcing trail patterns are the product of this non-linear attractive mechanism, where trails attract particles, and the particles' presence further strengthens the trails.

In Figure~\ref{fig:MCFrSim1}, the initial condition is close to a trail, leading to stabilization along a single lane looping around the domain. The initial condition in Figure~\ref{fig:MCFrSim2} is a Dirac mass, while Figures~\ref{fig:MCFrSim3} and \ref{fig:MCFrSim4} are associated with uniform initial conditions. From these results, we suggest that for a certain set of parameters: the uniform solution is unstable, non-trivial stationary solutions exist and that these stationary solutions resemble trails. In the next subsection, we propose a PDE simulation of a simplified system to provide further insights into the two previous conjectures.

\subsection*{Convergence towards stationary trails using finite difference method}

As illustrated in the particle simulations, the colony is able to create trails. We then decide to study numerical solutions that are constant in the second spatial variable. We will take $\mathbb{T}_1\times \mathbb{T}_{2\pi}$ as the domain with periodic boundary conditions. Supposing that $c$ and $\rho$ are constant with respect to their second $x_2$-position variable, we obtain the following system:

\begin{equation}
    \label{sys:station1D}
    \begin{cases}
        \partial_t \rho = \sigma_\theta \partial^2_{\theta\theta}\rho +\sigma \partial^2_{xx} \rho - \chi \partial_\theta ((-\lambda \partial_x c\sin  - \partial_{xx}c \lambda \sin \cos   )\rho) - \cos \partial_x \rho \text{ in } \mathbb{T}_1\times\mathbb{T}_{2\pi},\\
        \partial_t c = -\gamma c + \partial^2_{xx}c + \int \rho d\theta \text{ in } \mathbb{T}_1.\\
        \rho_{t=0} = \rho_0, c_{t=0} = c_0.
    \end{cases}
\end{equation}
We then used a finite difference implicit scheme to solve system~\eqref{sys:station1D}. 
The numerical results are presented in Figures \ref{fig:ConvergenceFD} and \ref{fig:ConvergenceFD2}, from left to right the density integrated over $\theta$(\textit{position density}) against the time variable, the $\log$-density at the terminal time and in the last plot the position density at different times together with the concentration field of the chemical.

Recalling that if a stationary solution to system \eqref{sys:station1D} exists in dimension 2, we can extend it to be constant in the second spatial variable, and this would yield a stationary solution for the 3-dimensional system \eqref{sys:Formidicae}. We thus can observe, in the first and third graphics of Figures \ref{fig:ConvergenceFD}, \ref{fig:ConvergenceFD2} a convergence towards a distribution resembling a trail. 
Furthermore, looking at the middle plots, that is the $\log$-density at the terminal time, we can find two local maxima, at $(x=0,\theta=\frac{\pi}{2}), (x=0, \theta = \frac{3\pi}{2})$. They are at the same $x$ variable, at the top of the ridge, and are associated with the two antipodal orientations that allow to keep moving along the trail. We also note that in Figure \ref{fig:ConvergenceFD2}, the initial condition is a small perturbation of the constant solution. The solution still convergences towards a trail, providing further evidence of instability of the constant solution for the given parameters. 

\begin{figure}
    \centering
    \includegraphics[width=.9\linewidth]{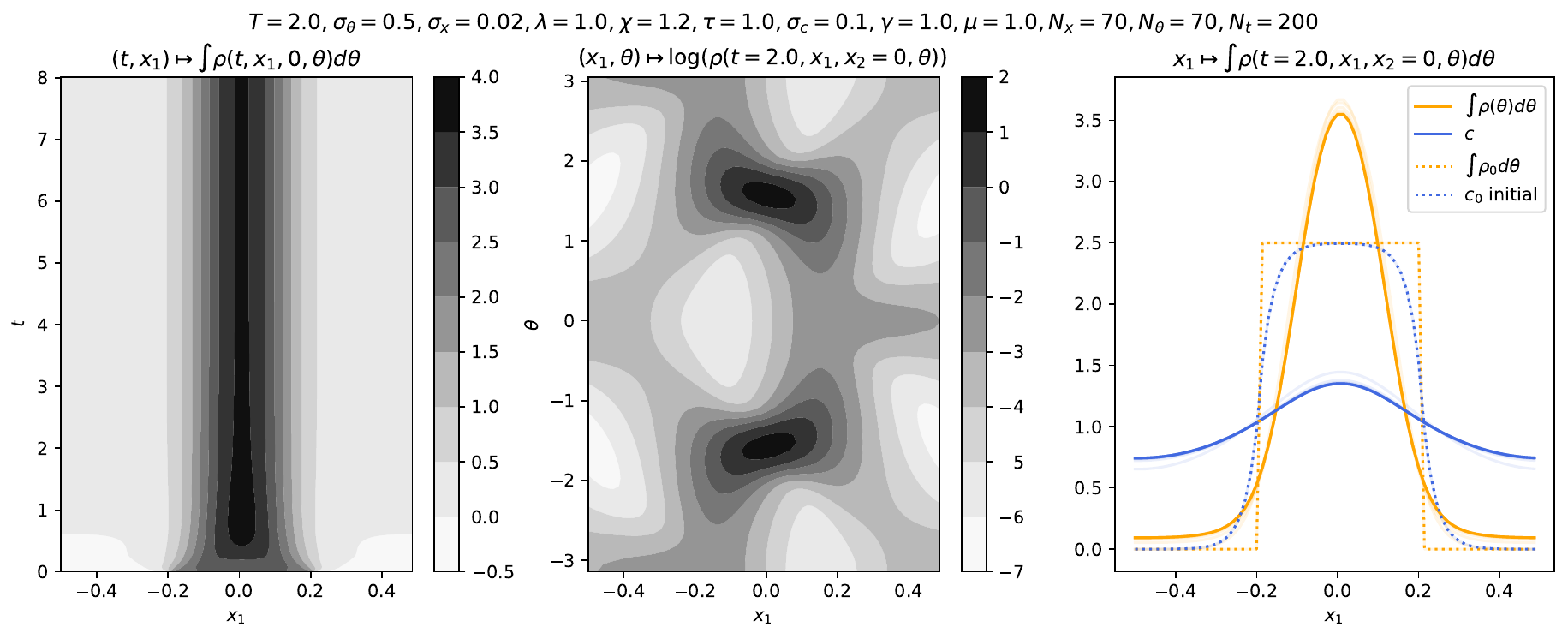}
    \caption{Finite difference for the system \eqref{sys:station1D} in 1-dimension in position space. From left to right: the $\theta$-integrated density over time, the $log$-density at the terminal time, and multiple time evaluation of the chemotatic field in \textit{blue} and the density in \textit{orange} with initial data in dotted lines.}
    \label{fig:ConvergenceFD}
\end{figure}

\begin{figure}
    \centering
    \includegraphics[width=.9\linewidth]{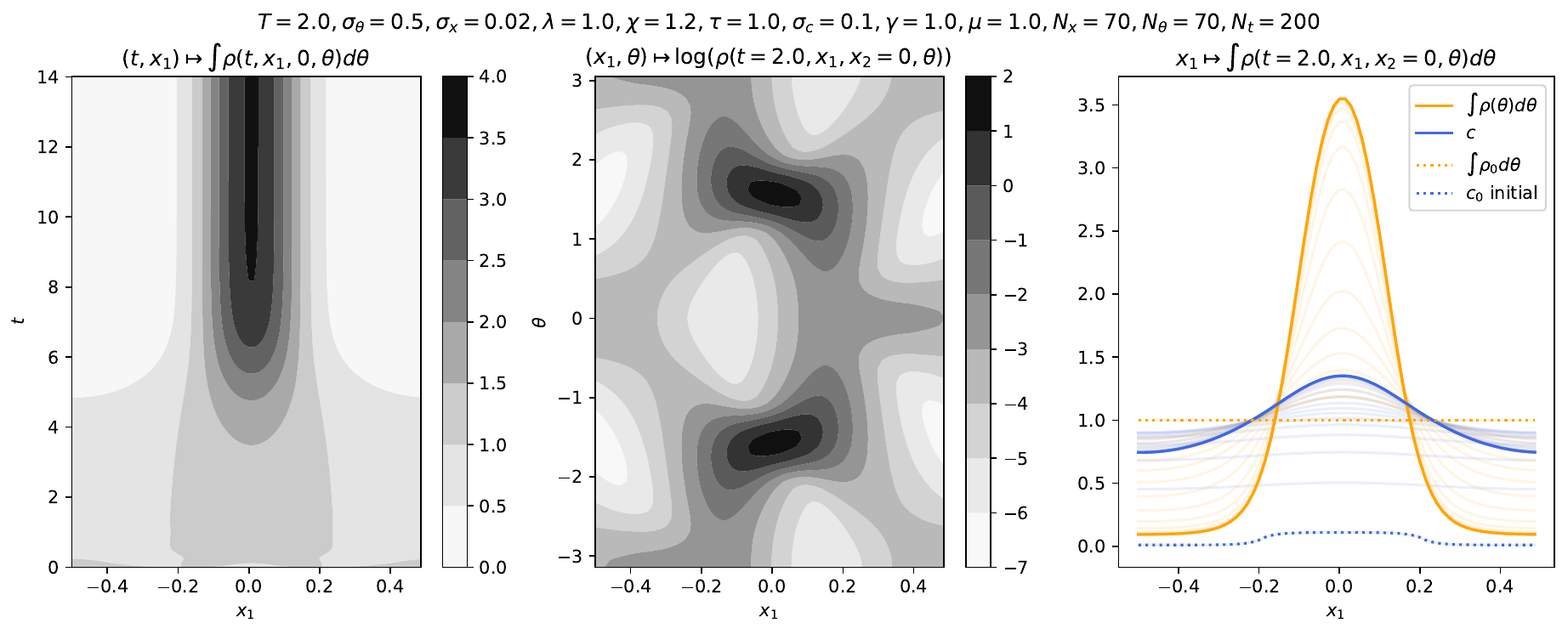}
    \caption{Instability result of the uniform distribution, for initial $\rho_0 \equiv 1/2\pi$ and $\|c_0\|_2<0.1$ small $L^2$ perturbation of the constant field, with the same parameters as in Figure \ref{fig:ConvergenceFD}.}
    \label{fig:ConvergenceFD2}
\end{figure}

\subsection*{Complex pattern and $\sigma_c$-Viscosity}
In this subsection, we present numerical evidence suggesting that, under certain conditions, more complex patterns emerge. Specifically, when the diffusion parameter $\sigma_c$ and the steering parameter $\tau$ are set to smaller values, localized pattern structures are observed. This effect is analogous to the influence of the viscosity parameter on turbulence in the Navier-Stokes equations. We begin by conducting finite difference simulations: Figures~\ref{fig:TurbulenceMC1} and \ref{fig:TurbulenceFD2}. Both simulations start from the same initial data, with all parameters left unchanged except for $\sigma_c$ and $\tau$. In the case of a large $\sigma_c$ (Figure~\ref{fig:TurbulenceFD2}), the solution to the PDE converges to a single lane, whereas with a small $\sigma_c$ (Figure~\ref{fig:TurbulenceFD1}), it converges towards two parallel trails.
\begin{figure}
    \centering
    \includegraphics[width=.9\linewidth]{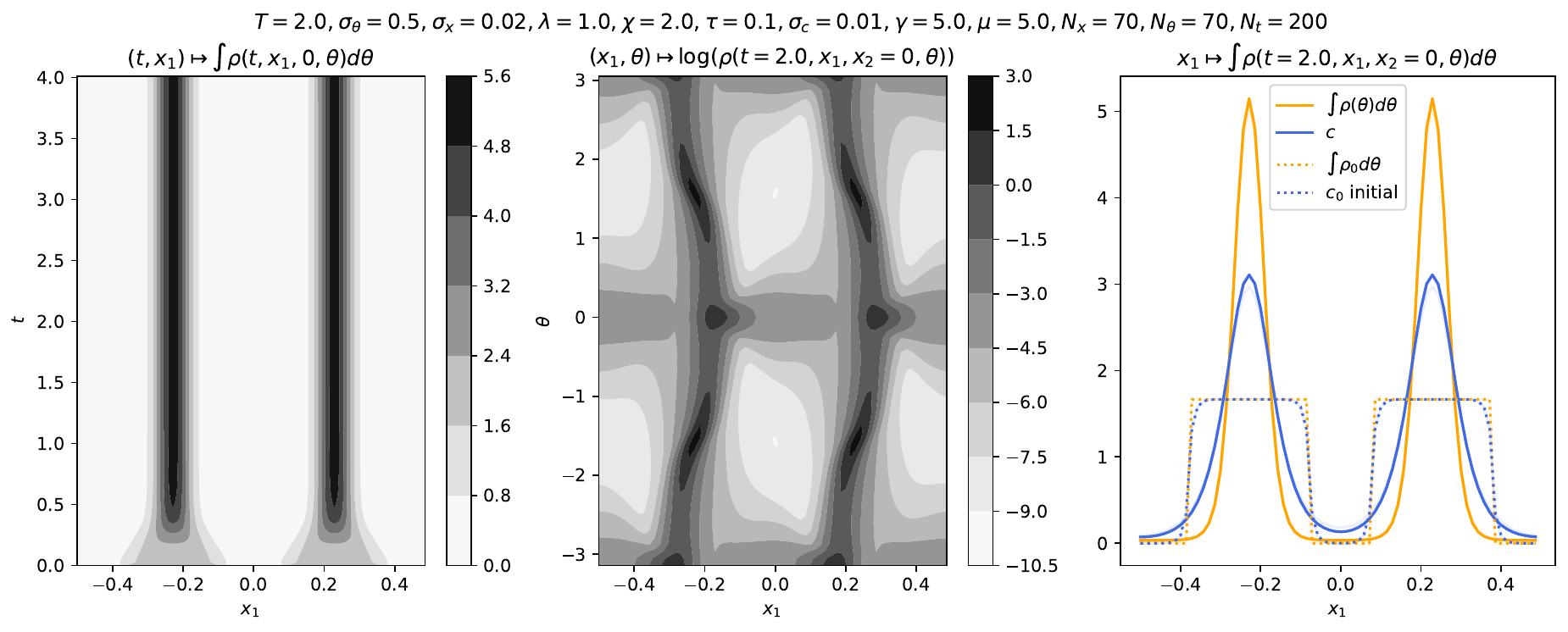}
    \caption{Finite Difference simulation in 1-dimension in space}
    \label{fig:TurbulenceFD1}
\end{figure}

\begin{figure}
    \centering
    \includegraphics[width=.9\linewidth]{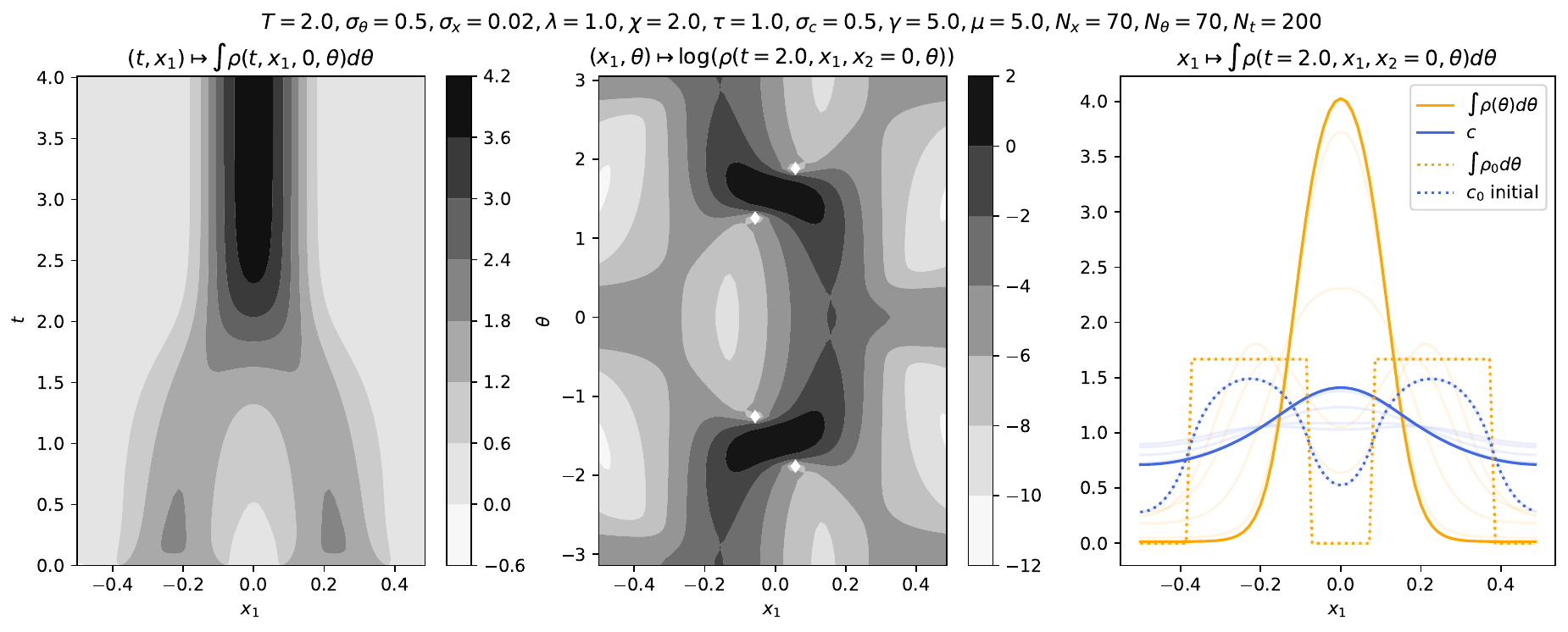}
    \caption{Finite Difference simulation in 1-dimension in space}
    \label{fig:TurbulenceFD2}
\end{figure}

We then selected parameters similar to those used in the simulation of Figure~\ref{fig:TurbulenceFD1} for the Monte-Carlo simulation, adjusting the number of Fourier coefficients $N_{Fr}$. We also vary the speed $\lambda$ to enforce local behavior. These numerical results are given in Figure~\ref{fig:TurbulenceMC1} and \ref{fig:TurbulenceMC2}.

\begin{figure}
    \centering
    \includegraphics[width=.9\linewidth]{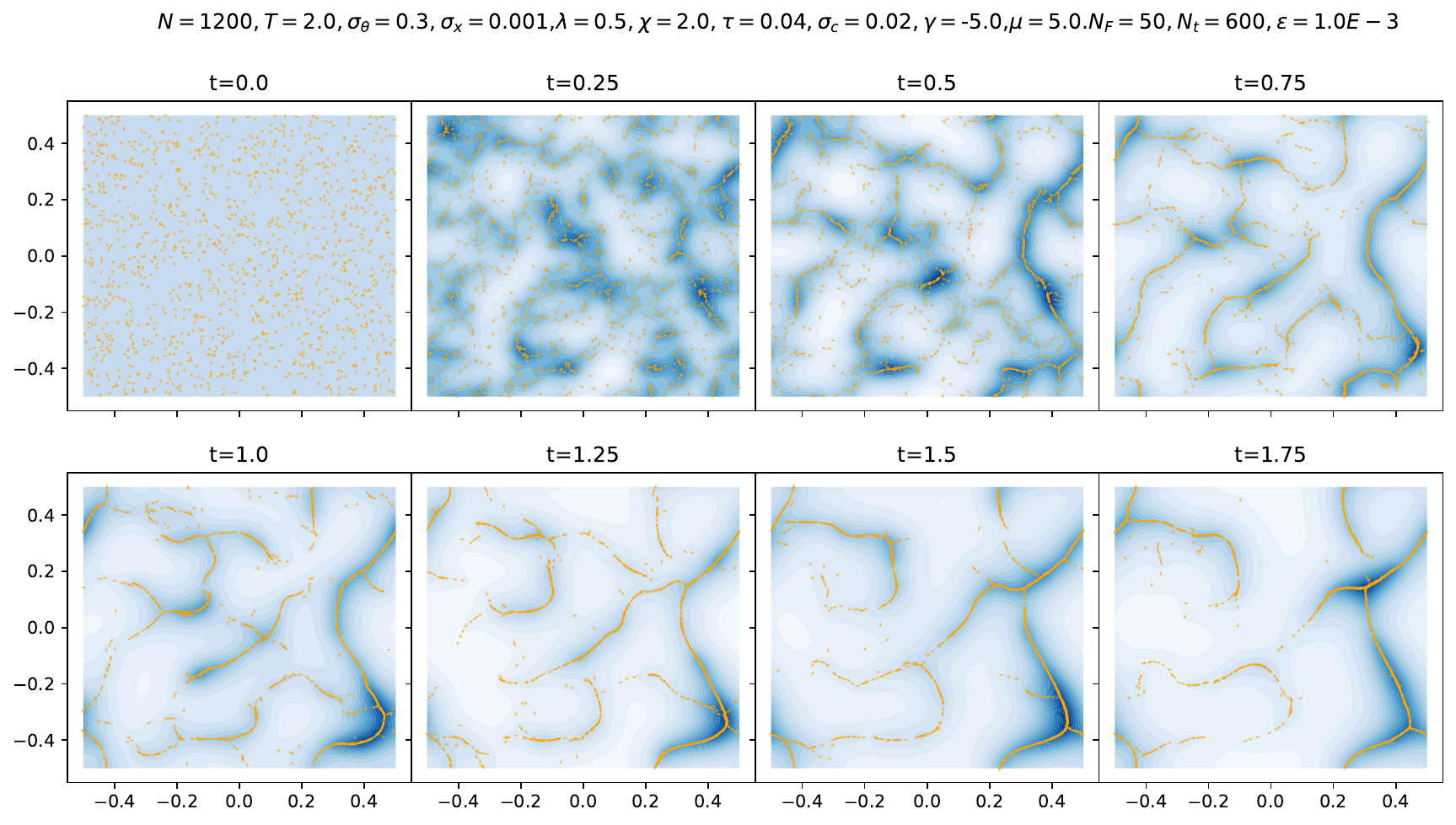}
    \caption{Monte-Carlo particle simulation for the McKean-Vlasov equation at small viscosity.}
    \label{fig:TurbulenceMC1}
\end{figure}

\begin{figure}
    \centering
    \includegraphics[width=.9\linewidth]{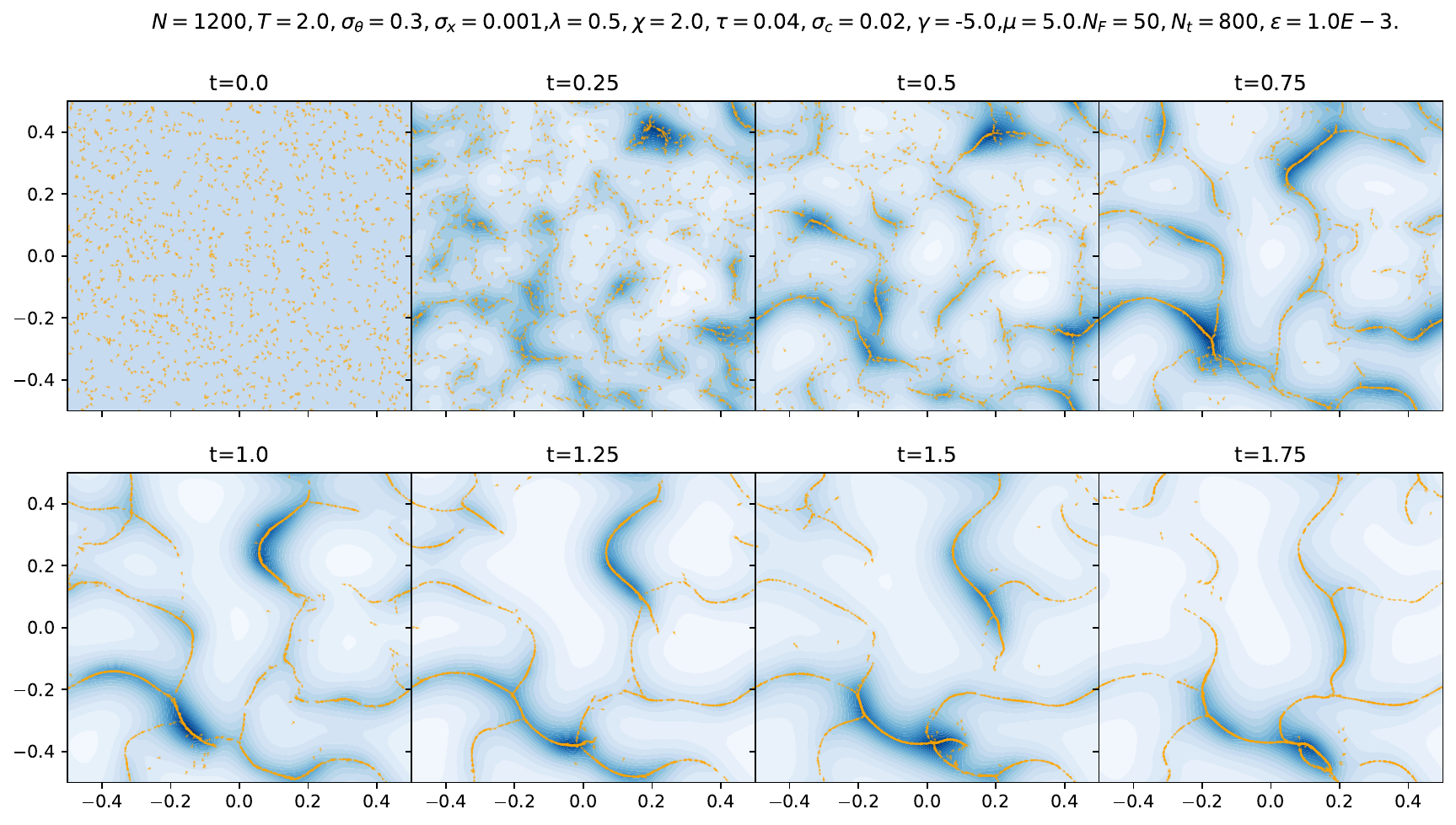}
    \caption{Monte-Carlo particle simulation for the McKean-Vlasov equation at small viscosity.}
    \label{fig:TurbulenceMC2}
\end{figure}

\begingroup
\renewcommand{\thesubsection}{\Alph{subsection}}
\renewcommand{\thetheorem}{\thesubsection.\arabic{theorem}}
\renewcommand{\thedefinition}{\thesubsection.\arabic{definition}}
\renewcommand{\thedefinitionproposition}{\thesubsection.\arabic{definitionproposition}}
\renewcommand{\theproposition}{\thesubsection.\arabic{proposition}}

\section*{Perspectives}
We believe that this model raises potentially interesting questions for both mathematical and numerical analysis, which we leave for future work.

For example, the problem of convergence and the propagation of chaos for the particle system~\eqref{sys:ParticuleSystem} to the McKean-Vlasov equation~\eqref{sys:mckeanvlasov} were not addressed in this paper.

It is also of great interest to prove the existence of non-trivial stationary solutions and to obtain asymptotic behavior properties of the model. Our numerical investigation (Section~\ref{sec:NumSim}) suggests that the model possesses non-trivial complex attractive stationary solutions, whereas the uniform solution is unstable under certain conditions. This finding relates to other open problems regarding the behavior of non-gradient flows, as the system is not associated with a natural \textit{free energy}.

For the existence and uniqueness result, the case $\sigma_x = 0$ is still unproven. Since the Mumford operator $-v \cdot \nabla_x + \Delta_\theta$ satisfies H\"ormander's condition, implying hypoelliptic regularity, one could expect some conservation of regularity for the system. Nevertheless, deriving estimates on the singularly coupled system in the case $\sigma_x = 0$ remains a non-trivial challenge.

In this paper, we provide a PDE scheme for a simplified system, assuming that the solution is constant in its second position variable, with periodic boundary conditions in the first variable, i.e., on $\mathbb{T}_1 \times \mathbb{T}_{2\pi}$ in space. However, developing a numerical scheme for the full partial differential system \eqref{sys:Formidicae} on $\mathbb{T}^2_1 \times \mathbb{T}_{2\pi}$ presents significant challenges due to the singular coupling and is left for future research.

\section*{Acknowledgement}
C.B. has been partially supported by the Chairs FDD (Institut Louis Bachelier) and FiME (Université Paris Dauphine - PSL et l’École Polytechnique).

M.T. has been partially supported by ANR SDAIM \textit{Stochastic and deterministic analysis for Irregular Models} and Chaire MMB (École Polytechnique - Muséum national d'Histoire naturelle Fondation de l'École Polytechnique - VEOLIA Environnement).


\appendix
\section*{Appendix}
\addcontentsline{toc}{section}{Appendix}

\subsection*{Fondamental solution estimates}
\begin{proof}[Proof of Proposition \ref{prop:funHesti}]
    Let us write respectively $\eta^\mathbb{T}$ and $\eta^\dR$, the fundamental solutions of the heat equation on respectively the Torus and the whole space in dimension 1, given by,
    \begin{align*}
        \eta^\dR_t(x) &= \frac{1}{\sqrt{4\pi t}} \exp\Big(-\frac{x^2}{4t}\Big),\\
        \eta^\mathbb{T}_t(x) &= \frac{1}{\sqrt{4\pi t}} \sum_{k \in \mathbb{Z}} \exp\Big(-\frac{(x + 2\pi k)^2}{4t}\Big).
    \end{align*}
    Then, $g$ is defined either, in the case $\dR^2 \times \mathbb{T}$ as,
    \begin{equation*}
        g_t(x_1,x_2,\theta) = \eta^\dR_t(x_1) \eta^\dR_t(x_2) \eta^\mathbb{T}_t(\theta),
    \end{equation*}
    or in the case $\mathbb{T}^3$ as,
    \begin{equation*}
        g_t(x_1,x_2,\theta) = \eta^\mathbb{T}_t(x_1) \eta^\mathbb{T}_t(x_2) \eta^\mathbb{T}_t(\theta).
    \end{equation*}
    Thus since it is a product, we only need to estimate the norms in the one-dimensional case and conclude using Fubini theorem. Furthermore, from the interpolation inequality in $L^\infty \cap L^1$, for any $1\leq p<\infty$,
    \begin{equation*}
        \|\eta\|_p \leq \|\eta\|_1^{\frac{1}{p}} \|\eta\|_\infty^{\frac{p-1}{p}} \forall \eta \in L^\infty \cap L^1.
    \end{equation*}
    It is classical that,
    \begin{equation*}
        \begin{cases}
            &\|\eta^\dR_t\|_1 = 1, \ \ \|\eta^\dR_t\|_\infty = \frac{1}{\sqrt{4\pi t}},\\
            &\|\partial_x\eta^\dR_t\|_1 = C_0\frac{1}{\sqrt{t}},\ \ \|\partial_x \eta^\dR_t\|_\infty = C_1\frac{1}{t}.
        \end{cases}
    \end{equation*}
    We now prove the same bounds in the case of the Torus. From the positivity, and dominated convergence Theorem,
    \begin{equation*}
        \|\eta^\mathbb{T}_t\|_1 = \int_{\mathbb{T}} \eta^\mathbb{T}_t(x) dx = \sum_{k\in \mathbb{Z}} \int_0^{2\pi}  \eta^\dR_t(x - 2\pi k) dx= \int_\dR \eta^\dR(x) dx = 1.
    \end{equation*}

    Similarly, we can bound the $L^1$-norm of the first derivative with the Gaussian from dominated convergence. After the intermediate verification of the uniform convergence of the series of the derivatives.
    \begin{equation*}
        \|\partial_x \eta^\mathbb{T}_t\|_{L^1(\mathbb{T})} \leq \|\partial_x \eta^\dR_t\|_{L^1(\dR)} = C_0 \frac{1}{\sqrt{t}}.
    \end{equation*}

    For the $L^\infty$-norm estimate, we will use the Fourier representation of $\eta^\mathbb{T}$, that is,
    \begin{equation*}
        \eta^{\mathbb{T}}_t(x) = 1 + 2\sum_{n\geq 1} e^{-tn^2}cos(nx).
    \end{equation*}

    It is straightforward to check that,
    \begin{equation*}
        0 \leq \eta^{\mathbb{T}}_t(x) = 1 + 2\sum_{n\geq 1} e^{-tn^2} \leq 1 + 2\int^\infty_0 e^{-ty^2}dy = 1 + \sqrt{\frac{\pi}{t}}.
    \end{equation*}

    Finally, differentiating inside the sum, from uniform convergence, we obtain,
    \begin{equation*}
        \partial_x \eta^{\mathbb{T}}_t(x) = 2\sum_{n\geq 1} ne^{-tn^2}(-sin(nx)) \leq 2\sum_{n\geq 1} ne^{-tn^2}
    \end{equation*}

    Noting that, 
    \begin{equation*}
        \begin{cases}
            e^{-tn^2}n \leq e^{-\frac{1}{2}}\frac{1}{\sqrt{2t}}\ \ \ \forall n \leq \frac{1}{\sqrt{2t}},\\
            e^{-tn^2}n \leq e^{-ty^2}y \ \ \ \  \forall n > \frac{1}{\sqrt{2t}}, \ \forall y \in (n-1,n].
        \end{cases}
    \end{equation*}
    So that,
    \begin{equation*}
        |\partial_x\eta^\mathbb{T}_t(x)| \leq e^{-\frac{1}{2}}\frac{1}{2t} + \int_0^\infty e^{-ty^2}ydy = \frac{e^{-1/2}}{2t} + \sqrt{\frac{\pi}{4t}}.
    \end{equation*}
    We conclude, noting $\mathbb{D}$ either $\dR$ or $\mathbb{T}$, that for any $t > 0$,
    \begin{align*}
        \|g_t\|_{L^p_x(L^1_\theta)} &\leq \|\eta^\mathbb{D}\|^{\frac{2}{p}}_{L^1}\|\eta^\mathbb{D}\|^{\frac{2(p-1)}{p}}_{L^\infty}\|\eta^\mathbb{T}\|_{L^1} \leq C_p \left(1+\frac{1}{t^{\frac{p-1}{p}}}\right),\\
        \|\partial_\theta g_t\|_{L^p_x(L^1_\theta)} &\leq \|\eta^\mathbb{D}\|^{\frac{2}{p}}_{L^1}\|\eta^\mathbb{D}\|^{\frac{2(p-1)}{p}}_{L^\infty}\|\partial_\theta\eta^\mathbb{T}\|_{L^1} \leq C_p \left(1+\frac{1}{t^{\frac{p-1}{p} + \frac{1}{2}}}\right),\\
        \max_{i=1,2} \|\partial_{x_i} g_t\|_{L^p_x(L^1_\theta)} &\leq \| \partial_x\eta^\mathbb{D}\|^{\frac{1}{p}}_{L^1}\|\partial_x\eta^\mathbb{D}\|^{\frac{p-1}{p}}_{L^\infty} \|\eta^\mathbb{D}\|^{\frac{1}{p}}_{L^1}\|\eta^\mathbb{D}\|^{\frac{p-1}{p}}_{L^\infty} \|\eta^\mathbb{T}\|_{L^1} \leq C_p\left(1+\frac{1}{t^{\frac{p-1}{p} + \frac{1}{2}}}\right).
    \end{align*}
    
\end{proof}

\subsection*{Gr\"onwall type inequality}

The case $p=\infty$ is a classical, for example, see Henry \cite{henry2006geometric}. We will use the method introduced by Pazy\cite{pazy2012semigroups}, with a finite number of iterations and conclude with the classical Gr\"onwall's Lemma.
Let us, first recall Gr\"onwall's inequality.

\begin{proposition}(Gr\"onwall inequality)
    \label{prop:CGrwnllInq}
    Suppose that $\phi\in L^\infty_+[0,T]$ satisfies the inequality,
    \begin{equation*}
        \phi(t) \leq c_0(t) + \int_0^t c_1(s)\phi(s)ds \text{ for \textit{a.e} } t\in [0,T],
    \end{equation*}
    where $c_1\in L^1_+[0,T]$, and $c_0\in L^\infty_+[0,T]$ is non-deacreasing. Then,
    \begin{equation*}
        \phi(t) \leq c_0(t)\exp\left(\int_0^t c_1(s)ds\right) \text{ for \textit{a.e} } t\in [0,T].
    \end{equation*}
\end{proposition}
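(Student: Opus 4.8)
The plan is to prove the inequality by a Picard-type iteration of the integral operator, exploiting that the kernel $c_1$ is nonnegative so that the defining inequality is preserved under substitution. Concretely, I would introduce the linear operator $(T\psi)(t) \defeq \int_0^t c_1(s)\psi(s)\,ds$ acting on $L^\infty_+([0,T])$, so that the hypothesis reads $\phi \leq c_0 + T\phi$ pointwise almost everywhere. Since $c_1 \geq 0$, the operator $T$ is monotone: $\psi_1 \leq \psi_2$ a.e. implies $T\psi_1 \leq T\psi_2$ a.e. Substituting the inequality into itself $n$ times and applying monotonicity at each step yields
\begin{equation*}
    \phi \leq \sum_{k=0}^{n-1} T^k c_0 + T^n \phi \quad \text{a.e. on } [0,T].
\end{equation*}

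The second step is to control the iterated terms using the monotonicity of $c_0$. Writing $K(t) \defeq \int_0^t c_1(s)\,ds$, which is finite and non-decreasing because $c_1 \in L^1_+$, I claim by induction that $(T^k c_0)(t) \leq c_0(t)\,K(t)^k/k!$. The base case $k=0$ is trivial, and for the inductive step one estimates, using $c_0(s) \leq c_0(t)$ for $s \leq t$,
\begin{equation*}
    (T^{k+1}c_0)(t) = \int_0^t c_1(s)\,(T^k c_0)(s)\,ds \leq c_0(t)\int_0^t c_1(s)\frac{K(s)^k}{k!}\,ds = c_0(t)\frac{K(t)^{k+1}}{(k+1)!},
\end{equation*}
the last equality following from $\frac{d}{ds}K(s)^{k+1} = (k+1)c_1(s)K(s)^k$.

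The final step is to pass to the limit $n \to \infty$. The remainder satisfies $0 \leq (T^n\phi)(t) \leq \|\phi\|_{L^\infty}\,K(t)^n/n! \to 0$, so the series converges and one obtains
\begin{equation*}
    \phi(t) \leq c_0(t)\sum_{k=0}^\infty \frac{K(t)^k}{k!} = c_0(t)\exp\Big(\int_0^t c_1(s)\,ds\Big) \quad \text{a.e.}
\end{equation*}

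I expect the only delicate points to be bookkeeping rather than a genuine obstacle: one must check that all the $T^k c_0$ and $T^n\phi$ are well defined (guaranteed by $\phi, c_0 \in L^\infty$ and $c_1 \in L^1$), and that the almost-everywhere inequalities propagate correctly through the iteration — each application of $T$ produces an absolutely continuous function, so the pointwise bounds in fact hold for \emph{every} $t$ after the first step, and the measure-theoretic subtleties dissolve. An alternative route avoiding the series is the integrating-factor argument: fixing $t$ and setting $R(s) = \int_0^s c_1\phi$, the bound $R' \leq c_1(c_0(t) + R)$ integrated against $e^{-K(s)}$ gives $R(t) \leq c_0(t)(e^{K(t)}-1)$, whence the same conclusion. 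I would retain the iteration version, as it is the form that matches the method of Pazy invoked for the singular variant of Proposition~\ref{prop:SGrnwllIneq}.
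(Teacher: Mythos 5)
Your proof is correct and complete: the monotone iteration $\phi \leq \sum_{k=0}^{n-1}T^k c_0 + T^n\phi$, the bound $(T^k c_0)(t)\leq c_0(t)K(t)^k/k!$ via the monotonicity of $c_0$ and the a.e.\ identity $K'=c_1$, and the vanishing of the remainder are all the standard argument for an $L^1$ kernel, and your handling of the a.e.\ subtleties (integration against $c_1\,ds$ absorbs null sets, and $T\psi$ is absolutely continuous) is sound. The paper itself gives no proof of this proposition --- it is recalled as classical with a reference to Henry --- so there is nothing to diverge from; your argument is exactly the textbook one the citation points to, and it is the natural companion to the finitely-iterated Pazy scheme used for Proposition~\ref{prop:SGrnwllIneq}.
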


In the following, we will need the following identity,
for any $\beta > -1$ and $x\geq 0$,
\begin{equation}
    \label{eq:GrnwIntIdenty}
    \int^x_0 (x-t)^\beta t^\beta dt= \frac{\Gamma(1+\beta)^2}{\Gamma(2+2\beta)}x^{1+2\beta},
\end{equation}
where $\Gamma$ is the gamma function.
\begin{proof}[Proof of Proposition \ref{prop:SGrnwllIneq}]
    Let $4<p\leq \infty$, in the following, if $p=\infty$, we use the convention $\frac{1}{p}=0$. First, we iterate the integral inequality,
    {\small
    \begin{align*}
        \phi(t) & \leq c_0(t) + \int^t_0 \left(1+\frac{1}{(t-s)^{\frac{1}{p} + \frac{1}{2}}}\right) c_1(s) \phi(s)ds,\\
        & \leq c_0(t) + \int^t_0 \left(1+\frac{1}{(t-s)^{\frac{1}{p} + \frac{1}{2}}}\right) c_1(s) \left( c_0(s) + \int^s_0 \left(1+\frac{1}{(s-u)^{\frac{1}{p} + \frac{1}{2}}}\right) c_1(u) \phi(u)du \right)ds\\
        & \leq c_0(t) \underset{=P^1(t)}{\underbrace{\left(1 +  \int^t_0 \left(1+\frac{1}{(t-s)^{\frac{1}{p} + \frac{1}{2}}}\right) c_1(s) ds \right)}}\\
        & \hspace{3em}+ \int^t_0 \int^s_0 \left(1+\frac{1}{(t-s)^{\frac{1}{p} + \frac{1}{2}}}\right)   \left(1+\frac{1}{(s-u)^{\frac{1}{p} + \frac{1}{2}}}\right) c_1(s) c_1(u) \phi(u)duds,\\
    \end{align*}}
    where we used that $c_0$ is increasing.
    Using H\"older inequality we obtain for $4<p<\infty$,
    \begin{equation*}
        P^1(t) = 1 + \|c_1\|_{L^p(0,t)}\left(t^{\frac{p-1}{p}} + C_p t^{\frac{p-4}{2p}}\right),
    \end{equation*}
    and if $p = \infty$,
    \begin{equation*}
        P^1(t) = 1 + \|c_1\|_{L^\infty(0,t)}\left(t + C_p t^{\frac{1}{2}}\right).
    \end{equation*}
    Noting that, for any $\beta \in \dR$,

    {\small
    \begin{equation}
        \label{eq:GrnwllPowerIneq}
        \left(1+x^\beta\right) \left(1+y^\beta\right) \leq (1+ 2t^{|\beta|}) \left(1+x^\beta y^\beta\right) \ \ \forall t>x,y>0.
    \end{equation}}
    
   Applyinng the above with $\beta=-\frac{1}{p}-\frac{1}{2}$, and Fubini-Tonelli Theorem, we obtain,
    {\small
    \begin{align}
        \label{eq:SGrnwllFiteration}
        \phi(t) & \leq c_0(t) P^1(t) +(1+ 2t^{\frac{1}{p} + \frac{1}{2}})\int^t_0 \int^t_u \left(1+\frac{1}{[(t-s)(s-u)]^{\frac{1}{p} + \frac{1}{2}}}\right) c_1(s)ds c_1(u) \phi(u)du,\nonumber\\
        & \leq c_0(t) P^1(t) + (1+ 2t^{\frac{1}{p} + \frac{1}{2}})\int^t_0 \int^{t-u}_0 \left(1+\frac{1}{[(t-u-s)s]^{\frac{1}{p} + \frac{1}{2}}}\right) c_1(s)ds c_1(u) \phi(u)du.
    \end{align}}
    Notice that in the case $p = \infty$, we can already conclude the desired inequality, since using identity \eqref{eq:GrnwIntIdenty},
    
    {\small
    \begin{align}
        \label{eq:GrnwLinfty}
        \phi(t) & \leq c_0(t) P^1(t) + (1 + t^{\frac{1}{2}})\int^t_0 \int^{t-u}_0 \left(1+\frac{1}{[(t-u-s)s]^{\frac{1}{2}}}\right) c_1(s)ds c_1(u) \phi(u)du, \nonumber\\
        & \leq c_0(t) P^1(t) + \underset{\defeq Q^1(t)}{\underbrace{(1 + t^{\frac{1}{2}})\|c_1\|_{L^\infty}(t+\pi)}}\int^t_0 c_1(u) \phi(u)du.
    \end{align}}

    And we obtain, for $p=\infty$, using the classical Gr\"onwall inequality,

    \begin{equation*}
        \phi(t) \leq c(t)M_{\infty}(\|c_1\|_{L^\infty(0,t)}, t) \ \ \ \forall t \in [0,T],
    \end{equation*}

    where $M_\infty$ is a positive non-decreasing continuous function, defined as,
    \begin{equation*}
        M_\infty(\|c_1\|_{L^\infty(0,t)},t) = P^1(t)\exp\left(Q^1(t)t\|c_1\|_{L^\infty(0,t)}\right),
    \end{equation*}
    implying an implicit dependence of $P^1$ and $Q^1$ in $\|c_1\|_{L^\infty(0,t)}$. 
    
    We now continue the iteration procedure for a general $4<p<\infty$. Starting from \eqref{eq:SGrnwllFiteration}, using H\"older inequality, inequality \eqref{eq:GrnwllPowerIneq} and identity \eqref{eq:GrnwIntIdenty},
    {\small
    \begin{align}
        \phi(t) & \leq c_0(t) P^1(t) + (1+ 2t^{\frac{1}{p} + \frac{1}{2}})\|c_1\|_{L^p(0,t)}C_p\int^t_0 (t^\frac{p-1}{p} + (t-u)^{-\frac{3}{p}}) c_1(u) \phi(u)du,\nonumber\\
        & \leq c_0(t) P^1(t) + \underset{\defeq Q^1(t)}{\underbrace{C_p(t^{\frac{p-1}{p}}+1)(1+ 2t^{\frac{1}{p} + \frac{1}{2}})\|c_1\|_{L^p(0,t)}}}\int^t_0 (1 + (t-u)^{-\frac{3}{p}}) c_1(u) \phi(u)du.
    \end{align}}
    Note that we lost some singularity in the kernel, since $-\frac{3}{p}>-\frac{1}{p}-\frac{1}{2}$ for $p>4$.
    We thus introduce the following iteration procedure. At iteration $n\geq 1$, for $r_n > -1$, we have the following integral inequality,

    \begin{equation}
        \label{eq:GrnwlIterate}
        \phi(t) \leq c_0(t) P^n(t) + Q^n(t) \int^t_0 (1 + (t-u)^{r_n}) c_1(u) \phi(u)du.
    \end{equation}

    Iterating \eqref{eq:GrnwlIterate} using H\"older inequality, inequality \eqref{eq:GrnwllPowerIneq} with $\beta=r_n$, Fubini-Tonelli, identity \eqref{eq:GrnwIntIdenty}, and the fact that $Q^n$ and $P^n$ are positive nondecreasing, we obtain, 
    {\small
    \begin{align*}
        \phi(t) & \leq c_0(t) P^n(t) \\
        &\hspace{3em} + Q^n(t) \int^t_0 (1 + (t-s)^{r_n}) c_1(s) \left(c_0(s) P^n(s) + Q^n(s) \int^s_0 (1 + (s-u)^{r_n}) c_1(u) \phi(u)du\right)ds,\\
         &\leq c_0(t) \underset{\defeq P^{n+1}(t)}{\underbrace{P^n(t)\left(1+  Q^n(t) \|c_1\|_{L^p}(t^{\frac{p-1}{p}}+C_{r_n}t^{\frac{p-1}{p}+r_n})\right)}}\\
         &\hspace{3em}+ (Q^n(t))^2 \int^t_0 \int^s_0 (1 + (t-s)^{r_n})  (1 + (s-u)^{r_n})  c_1(s)  c_1(u) \phi(u)duds,\\
         &\leq c_0(t)P^{n+1}(t)+ (Q^n(t))^2 (2t^{|r_n|+1})\int^t_0 \int^s_0 (1 + [(t-s)(s-u)]^{r_n})  c_1(s)  c_1(u) \phi(u)duds,\\
         &\leq c_0(t)P^{n+1}(t)+ \underset{\defeq Q^{n+1}(t)}{\underbrace{C (Q^n(t))^2 (t^{\frac{p-1}{p}}+1)(2t^{|r_n|+1})\|c_1\|_{L^p(0,t)}}}\int^t_0 \left(1 + (t-u)^{\frac{p-1}{p}+2r_n}\right)c_1(u) \phi(u)du,
    \end{align*}}

    for some positive constant $C$ depending on $r_n$ and $p$.
    This establishes recursive relations between $Q^{n+1}, P^{n+1}$ and $Q^n, P^n$, from which it is clear that the positivity and monotonicity of the $(Q^n)$ and $(P^n)$ are preserved. The important relationship is the following,
    \begin{equation}
        \label{eq:GrnwlAritGeometric}
        r_{n+1} = 2r_n + \frac{p-1}{p}.
    \end{equation}
    We thus obtained a similar integral inequality as in \eqref{eq:GrnwlIterate}, with $Q^{n+1}, P^{n+1}$ and $r_{n+1}$. Those recursive relations ensure that $r_n > -1$, and that $Q^{n},P^{n}$ stay positive non-decreasing, so that all the computations above hold.
    From \eqref{eq:GrnwlAritGeometric}, we obtain,
    \begin{equation*}
         r_{n+1} = 2^n\frac{p-4}{p}-\frac{p-1}{p}.
    \end{equation*}
    This implies that for any $4<p<\infty$, there exists a finite integer $n^*\geq 2$, such that $r_{n^*} \geq 0$. Recalling \eqref{eq:GrnwlIterate} for this $n^*$, using the classical Gr\"onwall inequality, we obtain 
    for any $t \in [0,T]$,
    \begin{align*}
        \phi(t) & \leq c_0(t) P^{n^*}(t) + Q^{n^*}(t) \int^t_0 (1 + (t-u)^{r_{n^*}}) c_1(u) \phi(u)du,\\
        & \leq c_0(t) P^{n^*}(t) + Q^{n^*}(t) (1 + t^{r_{n^*}}) \int^t_0 c_1(u) \phi(u)du,\\
        & \leq c_0(t) P^{n^*}(t)\exp\left(Q^{n^*}(t) (1 + t^{r_{n^*}}) t^\frac{p-1}{p} \|c_1\|_{L^p(0,t)}\right).
    \end{align*}

    We conclude the proof, by defining $M_p$ as announced in the statement as follows,
    \begin{equation*}
        M_p(\|c_1\|_{L^p(0,t)},t) \defeq P^{n^*}(t)\exp\left(Q^{n^*}(t) (1 + t^{r_{n^*}}) t^\frac{p-1}{p} \|c_1\|_{L^p(0,t)}\right),
    \end{equation*}
     implying an implicit dependence of $Q^{n^*}$ and $P^{n^*}$ in $\|c_1\|_{L^p(0,t)}$.
\end{proof}

\bibliographystyle{unsrt}  
\bibliography{Bibliography}

\end{document}